\documentclass[12pt,a4paper,reqno]{amsart}

\usepackage{amsmath}
\usepackage{amsfonts}
\usepackage{amssymb}
\usepackage[dvipsnames]{xcolor}
\usepackage{mathrsfs}
\usepackage{aliascnt} 
\usepackage{hyperref}
\usepackage{cleveref}
\usepackage{enumitem}

\usepackage{cite}

\usepackage[foot]{amsaddr}

\newtheorem{theorem}{Theorem}[section]
\crefname{theorem}{Theorem}{Theorems}

\newaliascnt{lemma}{theorem}
\newtheorem{lemma}[lemma]{Lemma}
\aliascntresetthe{lemma}
\crefname{lemma}{Lemma}{Lemmas}

\newaliascnt{definition}{theorem}
\newtheorem{definition}[definition]{Definition}
\aliascntresetthe{definition}
\crefname{definition}{Definition}{Definitions}

\newaliascnt{prop}{theorem}
\newtheorem{prop}[prop]{Proposition}
\aliascntresetthe{prop}
\crefname{prop}{Proposition}{Propositions}

\newaliascnt{corollary}{theorem}

\aliascntresetthe{corollary}
\crefname{corollary}{Corollary}{Corollaries}

\usepackage{aliascnt}
\newaliascnt{remark}{theorem}
\newtheorem{remark}[remark]{Remark}
\aliascntresetthe{remark}
\crefname{remark}{Remark}{Remarks}

\newaliascnt{hypotheses}{theorem}

\aliascntresetthe{hypotheses}
\crefname{hypotheses}{Hypotheses}{Hypotheses}

\numberwithin{equation}{section}

\begin{document}
	\begin{center}
		\Large{\textbf{On the well-posedness and long-time dynamics of a discrete coagulation--annihilation system}}
	\end{center}
    	\medskip

\centerline{${\text{Ikjot~kaur$^{1}$}}$, {${\text{Saroj ~ Si$^{2}$}}$} and  ${\text{Ankik Kumar Giri$^{3,*}$}}$}
\let\thefootnote\relax
\footnotetext{$^{*}$Corresponding author. Tel +91-1332-284818 (O);  Fax: +91-1332-273560  \newline{\it{${}$ \hspace{.3cm} Email address: }}ankik.giri@ma.iitr.ac.in}
	\medskip
	{\footnotesize
		\centerline{$^{1,3}$Department of Mathematics, Indian Institute of Technology Roorkee,}
		
	\centerline{Roorkee-247667, Uttarakhand, India}
    \centerline{$^{2}$Department of Mathematics and Applied Mathematics University of Pretoria,}
		
	\centerline{Pretoria-0028, Gauteng, South Africa}
	}
	\begin{quote}
	{\small {\em \bf Abstract.} The present article investigates the well--posedness of a discrete coagulation--annihilation model originally introduced by E. Ben-Naim and P. Krapivsky in Physical Review E (1995). We first establish the existence of solutions for a broad class of coagulation and annihilation rates, including rates that may be unbounded. These results are obtained by combining uniform estimates with Helly's selection theorem and the Rellich--Kondrachov compact embedding theorem. Under appropriate structural assumptions on these rates, we further prove uniqueness and continuous dependence of solutions on the initial data, thereby establishing well--posedness of the model. Finally, for a specific class of coagulation and annihilation rates, we additionally study the differentiability of solutions and characterize their long--time asymptotic behaviour. These results provide a rigorous mathematical framework for the analysis of the model and contribute to a deeper understanding of its well--posedness, regularity, and long-time dynamics.} 
\end{quote}
		\vspace{.3cm}
		\noindent
	{\rm \bf AMS subject classification (2020).} 34A35, 34A12, 46B50, 34G20, 34G25 \\
	\noindent
	{\bf Key words.} Coagulation--annihilation processes, dynamics of ODEs, existence, uniqueness, differentiability, large--time behaviour.
	\section{Introduction}
  \noindent The coagulation process refers to the phenomenon in which clusters of the same species merge to form larger clusters or aggregates. Such processes arise naturally in numerous applications, including aerosol dynamics~\cite{friedlander2000smoke}, polymerization~\cite{ziff1980kinetics}, molecular biology~\cite{Banasiak2019}, and astrophysics~\cite{lissauer1993growth,safronov1972evolution}. However, in many physical and chemical systems, particle interactions are not restricted to identical species. Instead, distinct species may interact, leading to mechanisms that alter the concentrations of both populations. For example, in a two-species system consisting of monomers $A$ and $B$, the aggregation of identical species leads to the formation of open linear polymers, whereas interactions between dissimilar clusters result in the formation of ring polymers. Unlike linear polymers, which remain reactive and continue to participate in subsequent coagulation events, ring polymers are chemically inert and are effectively removed from the reaction process~\cite{ben1995kinetics}. Motivated by such phenomena, the present work explores this class of models. More precisely, let $A_z$ and $B_z$ denote particles of types~$A$ and~$B$, respectively, with size $z$. The fundamental reactions governing the evolution of the system are given by 
	 \begin{align*}
	 A_x + A_y \;\longrightarrow\; A_{x+y},\;B_x + B_y\;\longrightarrow\; B_{x+y}\;\text{(coagulation)},\;\text{and}\;
	 A_x + B_y \;\longrightarrow\; \emptyset 
	\; \text{(complete annihilation).}
	 \end{align*}
The following system of equations describes how the concentrations of clusters of size \(i\), denoted by \(a_i(t)\) for clusters of type \(A\) and \(b_i(t)\) for clusters of type \(B\), evolve over time $t$. For \(t>0\), these concentrations satisfy
\begin{align}
	\frac{da_i}{dt} &= \frac{1}{2} \sum_{j=1}^{i-1} K^{a}_{j,i-j}\,a_{j} a_{i-j}
	- \sum_{j=1}^{\infty} K^{a}_{i,j}\, a_{i} a_{j}
	- \sum_{j=1}^{\infty} J_{i,j}\,a_i b_{j}, \quad i \in \mathbb{N},\label{eqn 1.1}\\
	\frac{db_i}{dt} &= \frac{1}{2} \sum_{j=1}^{i-1} K^{b}_{j,i-j}\,b_{j} b_{i-j}
	- \sum_{j=1}^{\infty} K^{b}_{i,j}\, b_{i} b_{j}
	- \sum_{j=1}^{\infty} J_{i,j}\,b_i a_{j}, \quad i \in \mathbb{N},\label{eqn 1.2}\\
	a_{i}(0) &= a^{0}_{i} \geq 0, \qquad
	b_{i}(0) = b^{0}_{i} \geq 0, \quad i \in \mathbb{N}.\label{eqn 1.3}
\end{align}
\noindent For $i=1$, the leading summation terms in  \eqref{eqn 1.1} and \eqref{eqn 1.2} vanish identically. Here, $\mathbb{N}$ denotes the set of positive integers. Furthermore, $K_{i,j}^{a}$ and $K_{i,j}^{b}$ denote the coagulation rates for clusters of types~$A$ and~$B$, respectively; that is, they represent the rates at which clusters of sizes $i$ and $j$ of the corresponding type merge into an aggregate of size $i+j$. On the other hand, $J_{i,j}$ denotes the annihilation rates for clusters of types~$A$ and~$B$, representing the rates at which clusters of sizes $i$ and $j$ collide and annihilate each other.
 We assume that $K_{i,j}^{a}$, $K_{i,j}^{b}$, and $J_{i,j}$ are real-valued and satisfy
\begin{align}\label{symmetry conditions}
0 \leq K^{a}_{i,j} = K^{b}_{i,j} = K_{i,j} = K_{j,i} 
\quad \text{and} \quad 
0 \leq J_{i,j} = J_{j,i},\;\text{for all}\;i,j \geq 1.
\end{align}
The leading summation terms in \eqref{eqn 1.1} and \eqref{eqn 1.2} represent the formation of clusters of size~$i$ of types~$A$ and~$B$, respectively, through the interaction of smaller clusters of the same type. The second terms represent the depletion of clusters of size~$i$ of types~$A$ and~$B$, respectively, due to coagulation with clusters of the same type and arbitrary size. The third terms represent the loss of clusters of size~$i$ of types~$A$ and~$B$, respectively, due to collisions with clusters of the opposite type and arbitrary size. It should be noted that the model considered above is motivated by the work of E. Ben-Naim and P. Krapivsky~\cite{ben1995kinetics}. In \cite{ben1995kinetics}, E. Ben-Naim et al. obtained an exact solution to \eqref{eqn 1.1}--\eqref{eqn 1.3}, specifically in the case
\begin{align*}
	K_{i,j}^{a} = K_{i,j}^{b} = J_{i,j} = 2,\quad 
	a_{i}(0) = \delta_{i1}, \quad b_{i}(0) = \lambda \delta_{i1},
\end{align*}
for all $i,j \geq 1$ and some parameter $\lambda \geq 0$, where $\delta_{i1}$ denotes the Kronecker delta defined by
\[
\delta_{i1} =
\begin{cases}
	1, & i = 1, \\
	0, & i \neq 1.
\end{cases}
\]
They also investigated whether the solutions exhibit universal self-similar behaviour.
In~\cite{laurenccot2010nonuniversal}, Ph. Lauren{\c{c}}ot and H. Van Roessel studied the continuous coagulation-annihilation system~\eqref{eqn 1.1}--\eqref{eqn 1.3} with constant coagulation rates $K^{a}_{i,j}=K^{b}_{i,j}=k>0$ and constant annihilation rates $J_{i,j}=d>0$, where $k$ and $d$ are not necessarily equal, and investigated its time evolution. They also analyzed the existence or absence of long-time self-similar behaviour. In~\cite{da2012scaling}, F. P.~da Costa et al.\ extended the results of~\cite{laurenccot2010nonuniversal} by relaxing the assumption $K^{a}_{i,j}=K^{b}_{i,j}$ and allowing the two coagulation rates to differ, while still assuming that $K^{a}_{i,j}$ and $K^{b}_{i,j}$ are constant.
 
To the best of our knowledge, the boundedness of the coagulation and annihilation rates has been a standing assumption in studies of the coagulation-annihilation models discussed so far. The primary focus of this work is to demonstrate that the system~\eqref{eqn 1.1}--\eqref{eqn 1.3} is well-posed for unbounded coagulation-annihilation rates, investigate the differentiability of solutions, and analyze their large-time behaviour. The existence result applies, in particular, to coagulation rates of the form $K_{i,j} = \rho_{i} \rho_{j} + \beta_{i,j}$ for all $i, j\ge 1$, where $(\rho_{i})_{i\ge 1}$ is a non-negative real-valued sequence that satisfies either~\eqref{eqn 2.4} or \eqref{eqn 2.5}. In addition, no growth condition on the annihilation rates is required, provided that the initial mass density, as introduced in Section~2, is finite. However, in the case of finite initial number density, as introduced in Section~2, we impose the assumption that $J_{i,j}\le \rho_{i} \rho_{j}$ for all $i,j\ge 1$, where $(\rho_i)_{i\geq 1}$ is the sequence introduced above. Furthermore, we address the case in which coagulation rates grow at most linearly, motivated by~\cite[Theorem 2.3]{laurenccot2002discrete}. The absence of any growth or structural condition on the annihilation rates in \Cref{finite mass theorem} is inspired by \cite{laurenccot2024redner}. Moreover, continuous dependence, uniqueness, differentiability, and large-time behaviour of solutions are discussed under certain  additional assumptions on the coagulation and annihilation rates. 

The paper is structured as follows: \Cref{Section Main results} contains the key results along with the notation used. \Cref{Section Approx systems} is devoted to solving the approximating system and obtaining certain moment estimates. Existence results under different assumptions on the coagulation and annihilation rates are established in \Cref{Section Existence results}. Continuous dependence on the initial data and uniqueness of solutions are addressed in \Cref{Section Uniqueness}. \Cref{Section Differentiability} examines the differentiability of solutions to \eqref{eqn 1.1}--\eqref{eqn 1.3}. Lastly, \Cref{ Large-Time behaviour of solutions} focuses on the large-time behaviour of solutions.
 
	\section{Main results}\label{Section Main results}
	\noindent We begin by introducing the notation used throughout the paper. Let $X_{p}$, $p\in \mathbb{R}$, be the space comprising all real-valued sequences $x=(x_{i})_{i\geq 1}$ such that
	$$\sum_{i = 1}^{\infty} i^p |x_{i}|  < \infty,$$
	with the norm defined by 
	$$ \|x\|_{p} = \sum_{i = 1}^{\infty} i^p |x_{i}|.$$
	$\|x\|_{p}$ is also called the $p$-th moment of $x$. $(X_{p}, \|\cdot\|_{p})$ is a Banach space.
	We also denote the positive cone of $X_{p} $ by $X_{p}^{+}$, that is,
	$$ X^{+}_{p} = \{ x \in X_{p}: x_{i} \geq 0 \ \text{for each} \  i    \geq 1\}.$$
	Note that $\|a\|_{1}$ represents the total density or mass of clusters of type $A$, while $\|a\|_{0}$ denotes the total population of clusters of type $A$.
	
	We now state precisely what is meant by a solution to \eqref{eqn 1.1}--\eqref{eqn 1.3}.
	\begin{definition} \label{Definition}
		For $T\in(0,\infty]$, let $c=(a,b)=\left((a_i)_{i\geq1},(b_i)_{i\geq1}\right)$ be a pair of sequences of non-negative continuous functions. We say that $c$ solves \eqref{eqn 1.1}--\eqref{eqn 1.3} if 
		\begin{enumerate}
			\item For each $i \in \mathbb{N}$,  
			$c_{i}= (a_{i}, b_{i})\in C([0,T))\times C([0,T))$.
			\item For each $i \in \mathbb{N}$ and $t \in [0,T)$,  
			\[
			\sum_{j=1}^{\infty} K_{i,j} a_{j} \in L^{1}(0,t), \quad 
			\sum_{j=1}^{\infty} K_{i,j} b_{j} \in L^{1}(0,t), \quad
			\sum_{j=1}^{\infty} J_{i,j} a_{j} \in L^{1}(0,t), \quad
			\sum_{j=1}^{\infty} J_{i,j} b_{j} \in L^{1}(0,t).
			\]
			\item For each $i\in \mathbb{N}$ and $t\in(0,T)$, the following identities hold:
			\begin{align}\label{eqn 2.1}
				a_{i}(t) &= a_{i}^{0} + \int_{0}^{t} \left( 
				\frac{1}{2} \sum_{j = 1}^{i-1} K_{j,i-j} a_{j}(s) a_{i-j}(s) 
				- a_{i}(s) \sum_{j =1}^{\infty} K_{i,j} a_{j}(s) 
				- a_{i}(s)\sum_{j=1}^{\infty} J_{i,j} b_{j}(s)\right) ds,
			\end{align}
			\begin{align}\label{eqn 2.2}
				b_{i}(t) &= b_{i}^{0} + \int_{0}^{t} \left( 
				\frac{1}{2} \sum_{j = 1}^{i-1} K_{j,i-j} b_{j}(s) b_{i-j}(s) 
				- b_{i}(s) \sum_{j =1}^{\infty} K_{i,j} b_{j}(s) 
				- b_{i}(s)\sum_{j=1}^{\infty} J_{i,j} a_{j}(s)\right) ds.
			\end{align}
		\end{enumerate}
	\end{definition}
 By the preceding definition, it follows immediately that if $\left((a_{i})_{i\ge 1},(b_{i})_{i \ge 1}\right)$ satisfies \eqref{eqn 2.1}--\eqref{eqn 2.2} on $[0,T)$, then $a_{i}$ and $b_{i}$ are absolutely continuous for each $i\ge 1$. Consequently, \eqref{eqn 1.1}--\eqref{eqn 1.2} hold for almost every $t\in [0,T).$

Throughout this paper, we assume the existence of two non-negative real-valued sequences, 
$\rho=(\rho_{i})_{i\geq 1}$ and $(\beta_{i,j})_{i,j\geq 1},$ such that  
\begin{align}\label{eqn 2.3}
	K_{i,j} = \rho_{i} \rho_{j} + \beta_{i,j}, \quad \text{for all } i,j \geq 1.  
\end{align}
In addition, we impose one of the following two sets of assumptions on $(\rho_{i})_{i\geq 1}$ and $(\beta_{i,j})_{i,j\geq 1}$: either 
\begin{align}\label{eqn 2.4}
	\inf_{i \geq 1} \frac{\rho_{i}}{i} = S > 0\quad \text{and}\quad  \beta_{i,j} \leq \kappa \rho_{i} \rho_{j}
	\quad \text{for some}~\kappa>0 \text{ and all } i, j\geq 1,
\end{align}
or
\begin{align}\label{eqn 2.5}
	\lim_{i \to \infty}\frac{\rho_{i}}{i} =0\quad \text{and} \quad \lim_{j \to \infty}\frac{\beta_{i,j}}{j} = 0 \quad \text{for all}~ i \geq 1.
\end{align}
\begin{theorem}\label{finite mass theorem}
Suppose that \eqref{symmetry conditions}, \eqref{eqn 2.3}, and either \eqref{eqn 2.4} or \eqref{eqn 2.5} hold. If $a^{0} = (a_{i}^{0})_{i\geq 1} \in X^{+}_{1}$ and $b^{0} = (b_{i}^{0})_{i\geq 1} \in X^{+}_{1}$, at least one solution $c= (a,b)$ to \eqref{eqn 1.1}--\eqref{eqn 1.3} exists on $[0,\infty)$, as defined in \Cref{Definition}. In addition, we have
		\begin{align}
		(a(t), b(t)) \in X^{+}_{1} \times X^{+}_{1}, \  \  \ 	\|a(t)\|_{1} \leq \|a^{0}\|_{1}, \quad \text{and} \quad \|b(t)\|_{1} \leq \|b^{0}\|_{1} \ \ \text{for each} \ t\in[0,\infty).
		\end{align}
\end{theorem}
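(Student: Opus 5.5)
The plan is the classical truncation--compactness method. For $n\ge 2$ I consider the finite system obtained by keeping only sizes $i\le n$. The gain term sums over $j\le i-1$, and the coagulation loss sums over $j\le n-i$, so that clusters never leave $\{1,\dots,n\}$. The annihilation loss is truncated to $\sum_{j=1}^{n} J_{i,j}a_i b_j$, and likewise for $b$. The initial data are $a_i^n(0)=a_i^0$ and $b_i^n(0)=b_i^0$ for $i\le n$.

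\textbf{Step 1: the truncated system.} This is a finite-dimensional locally Lipschitz ODE, so it has a local solution. Each loss term is $a_i^n$ times a coefficient, so nonnegativity is preserved. Multiplying by $g_i$, summing, and using the symmetry \eqref{symmetry conditions} gives the weak identity
\[
\frac{d}{dt}\sum_{i=1}^n g_i a_i^n=\frac12\sum_{i+j\le n}(g_{i+j}-g_i-g_j)K_{i,j}a_i^na_j^n-\sum_{i,j\le n}g_iJ_{i,j}a_i^nb_j^n .
\]
With $g_i=i$ this yields $\|a^n(t)\|_1\le\|a^0\|_1$, and similarly for $b^n$. Hence the solution is global and stays in $X_1^+$. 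The choice $g_i=1$ bounds $\|a^n\|_0$ by $\|a^0\|_0\le\|a^0\|_1$. Integrating the $g_i=i$ identity in time also gives
\[
\int_0^t\sum_{i,j\le n} iJ_{i,j}a_i^nb_j^n\,ds\le \|a^0\|_1 .
\]
In particular $\int_0^t\sum_j J_{i,j}a_i^nb_j^n\,ds\le \|a^0\|_1/i$ uniformly in $n$. This is how I avoid any growth assumption on $J$.

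\textbf{Step 2: compactness.} For each fixed $i$, I need $\frac{d}{dt}a_i^n$ to be bounded in $L^1(0,T)$ uniformly in $n$, so that Helly's theorem applies.
\begin{itemize}
\item The gain term is bounded by $\max_{j,k<i}K_{j,k}\,\|a^0\|_0^2$.
\item The coagulation loss is $a_i^n\sum_j(\rho_i\rho_j+\beta_{i,j})a_j^n$. Under \eqref{eqn 2.5}, both $\rho_j\le Cj$ and $\beta_{i,j}\le C_ij$, so this term is bounded by $C_i\|a^0\|_1^2$.
\item Under \eqref{eqn 2.4}, I need a bound on $\int_0^T(\sum_j\rho_ja_j^n)^2\,ds$. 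Taking $g_i=i$ on the full range, the term $\frac12\sum_{i+j>n}(i+j)K_{i,j}a_ia_j$ in the mass dissipation controls $\int_0^T(\sum\rho_j a_j)^2$ only for the truncated interactions. So I plan to use instead the choice $g_i=1$. There $g_{i+j}-g_i-g_j=-1$, giving
\[
\frac12\int_0^T\sum_{i,j}K_{i,j}a_i^na_j^n\,ds\le\|a^0\|_0 ,
\]
which dominates $\frac12\int_0^T(\sum_j\rho_ja_j^n)^2\,ds$. Combined with $\beta_{i,j}\le\kappa\rho_i\rho_j$, this bounds the coagulation loss in $L^1(0,T)$.
\item The annihilation loss is bounded by Step 1.
\end{itemize}
Helly's theorem and a diagonal argument then give a subsequence such that $a_i^n\to a_i$ and $b_i^n\to b_i$ pointwise on $[0,\infty)$ for every $i$. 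By Fatou's lemma the limit satisfies $\|a(t)\|_1\le\|a^0\|_1$ and $\|b(t)\|_1\le\|b^0\|_1$. Equicontinuity, and hence continuity of the limit, needs stronger time bounds. I would get these from a uniform $L^p$ bound on the derivatives, or from the Rellich--Kondrachov embedding $W^{1,1}\subset L^1$ together with identification through the integral equation.

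\textbf{Step 3: passage to the limit in \eqref{eqn 2.1}.} This is the main obstacle: the infinite sums $\sum_jK_{i,j}a_j^n$ and $\sum_jJ_{i,j}b_j^n$ have to converge.
\begin{itemize}
\item Under \eqref{eqn 2.5}, split the sum at $j=m$. The finite part converges by pointwise convergence. The tail is bounded by $\sup_{j>m}(\rho_j+\beta_{i,j})/j\cdot\|a^0\|_1$, which tends to $0$ as $m\to\infty$.
\item Under \eqref{eqn 2.4}, there is no such smallness. Instead I use $\int_0^T(\sum\rho_ja_j^n)^2\le C$. Since $\rho_j\ge Sj$, this gives $L^2$-in-time control of the first moment-type sum. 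Then, for the tail $\sum_{j>m}\rho_ja_j^n$, I would aim for a uniform integrability or vanishing estimate, again via a split at $j=m$, possibly after first passing to a subsequence along which $\sum_j\rho_j a_j^n$ converges weakly in $L^2$.
\item For the annihilation sum, no tail smallness is available. I plan to use a dual-moment argument: test with $g_i=\mathbf 1_{i\ge m}$ or with the mass, and exploit $\int_0^t\sum_{i,j}iJ_{i,j}a_ib_j\le\|a^0\|_1$ to show that the contribution from $j>m$ is small in $L^1$ uniformly in $n$. If that fails, I would fall back on a weak lower semicontinuity and supersolution argument, and then close the identity by a separate mass-balance comparison.
\end{itemize}
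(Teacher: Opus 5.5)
Under \eqref{eqn 2.5} your plan works, since the tail is killed by $\sup_{j>m}K_{i,j}/j$. Under \eqref{eqn 2.4}, however, Step 3 has a genuine gap. You never show that
$\sup_n\int_0^T a_i^n\sum_{j>m}K_{i,j}a_j^n\,ds\to 0$ as $m\to\infty$, and nothing you list can give it:
\begin{itemize}
\item Because $\rho_j\ge Sj$, the first-moment bound gives no tail smallness.
\item A weak $L^2$ limit of $\sum_j\rho_ja_j^n$ only yields lower semicontinuity. The defect is exactly mass escaping to large sizes, which does happen for these gelling kernels, since $K_{i,j}\ge S^2ij$.
\end{itemize}
Your conservative truncation makes this worse. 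With it, the $g_i=1$ identity only controls $\sum_{i+j\le n}K_{i,j}a_i^na_j^n$. That suffices for the $L^1$ bound on $\frac{d}{dt}a_i^n$, but it does \emph{not} dominate $\int_0^T\bigl(\sum_{j\le n}\rho_ja_j^n\bigr)^2ds$, contrary to what you wrote, because the pairs with $i+j>n$ are missing. Moreover, mass that escapes to sizes of order $n$ stays active in your system. It keeps depleting $a_i^n$ at rate at least $S\rho_i a_i^n\sum_{m<j\le n-i}ja_j^n$. So the tail need not vanish, and the limit may satisfy a Flory-type equation with an extra gel interaction term instead of \eqref{eqn 2.1}.

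The paper closes this gap with two ingredients you are missing.
\begin{itemize}
\item \textbf{Inert truncation.} It sets $K^N_{i,j}=J^N_{i,j}=0$ for $\max\{i,j\}>N$, so sizes $N+1,\dots,2N$ are inert. Then $h_i=1$ gives the full square $\frac12\int_0^t\bigl|\sum_{i\le N}\rho_ia_i^N\bigr|^2ds\le\|a^0\|_0$, which is \eqref{zeroth moment bound}.
\item \textbf{Half-moment test function.} It takes $h_i=i^{1/2}$ for $i\le N$. The subadditivity defect $i^{1/2}+j^{1/2}-(i+j)^{1/2}\ge\frac12\min\{i,j\}^{1/2}$ yields \eqref{half moment bound}: $\int_0^t\bigl|\sum_{i=H}^N\rho_ia_i^N\bigr|^2ds\le 4\|a^0\|_{1/2}H^{-1/2}$. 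Via Cauchy--Schwarz, this is the uniform tail smallness behind \eqref{tail of C series for A}.
\end{itemize}

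For the annihilation term you cite the wrong dissipation. The bound $\int\sum iJ_{i,j}a_ib_j\le\|a^0\|_1$ weights the $a$-index, not the $b$-index. The tail in $j$ is controlled by the mass dissipation of $b$ together with the symmetry of $J$:
$\int_0^T a_i^n\sum_{j\ge m}J_{i,j}b_j^n\,ds\le\frac1m\int_0^T\sum_{i',j}jJ_{i',j}a_{i'}^nb_j^n\,ds\le\|b^0\|_1/m$, uniformly in $n$. This is the paper's argument, it works with your truncation too, and no fallback is needed.

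Finally, continuity of $a_i$ and $b_i$ follows from the integral identity once the limit is identified, so no extra equicontinuity is required.
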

To prove \Cref{finite mass theorem}, we adapt the approaches developed in \cite[Theorem 2.2]{laurenccot1999global} and \cite[Theorem 1.2]{laurenccot2024redner}. We first truncate the infinite system of ODEs \eqref{eqn 1.1}--\eqref{eqn 1.3} to a finite system~\eqref{truncated system}, consisting of $4N$ equations for $N \ge 2$. More precisely, we set $K_{i,j} = 0$ and $J_{i,j} = 0$ whenever $\max\{i,j\} > N$, following the truncation procedure in \cite{bak1991finite}, \cite{laurenccot1999global}, and \cite{da1998finite}. The approximating system~\eqref{truncated system} admits a local solution by the Picard--Lindel\"of theorem, which can then be extended to a global solution using suitable moment estimates. We next establish uniform estimates for the solution of the approximating system~\eqref{truncated system} and its derivatives. Combining these bounds with Helly's theorem, we obtain a convergent subsequence. Finally, we show that the limit of this subsequence solves \eqref{eqn 1.1}–\eqref{eqn 1.3}, according to \Cref{Definition}.

\Cref{finite mass theorem} proves the existence of a solution to \eqref{eqn 1.1}--\eqref{eqn 1.3} for initial data satisfying $a^{0}, b^{0}\in X_{1}^{+}$. We now extend this result to the broader class of initial data with finite total number concentration, namely, $a^{0}, b^{0}\in X_{0}^{+}$. 
\begin{theorem}\label{finite particle theorem}
		Suppose that conditions \eqref{symmetry conditions}, \eqref{eqn 2.3}, and \eqref{eqn 2.4} are satisfied. Furthermore, assume that $J_{i,j}\le \rho_{i}\rho_{j}$ for all $i,j\ge1$, where $(\rho_{i})_{i\ge 1}$ is a non-negative real-valued sequence satisfying \eqref{eqn 2.4}. If $a^{0} = (a_{i}^{0})_{i\geq 1} \in X^{+}_{0}$ and $b^{0} = (b_{i}^{0})_{i\geq 1} \in X^{+}_{0}$, at least one solution $c = (a,b)$ to \eqref{eqn 1.1}--\eqref{eqn 1.3} exists on $[0,\infty)$, as defined in \Cref{Definition}.
	 In addition, we have
	\begin{align}\label{eqn 2.8}
		(a(t), b(t)) \in X^{+}_{0} \times X^{+}_{0}, \  \  \ 	\|a(t)\|_{1} < \infty, \quad \text{and} \quad \|b(t)\|_{1} < \infty  \ \ \text{for each} \; t\in (0,\infty).
		\end{align} 
	\end{theorem}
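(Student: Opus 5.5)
Since $a^0,b^0$ now lie only in $X_0^+$, \Cref{finite mass theorem} cannot be applied directly. I will use the same truncation as in its proof and replace the first-moment bound, which is no longer available at $t=0$, by a smoothing estimate. The superlinear growth $\rho_i\ge Si$ from \eqref{eqn 2.4} gives $K_{i,j}\ge S^2 ij$, and this should make the first moment finite instantly for $t>0$.

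\textbf{Step 1: uniform bounds for the truncated system.} Let $(a^N,b^N)$ solve the truncated system \eqref{truncated system} with initial data $(a_i^0\mathbf 1_{i\le N})$ and $(b_i^0\mathbf 1_{i\le N})$. These data lie in $X_1^+$ for each fixed $N$. Testing with $1$ gives
\[
\frac{d}{dt}\|a^N\|_0=-\tfrac12\sum_{i,j}K_{i,j}a^N_ia^N_j-\sum_{i,j}J_{i,j}a^N_ib^N_j\le0,
\]
and the same holds for $b^N$. So the zeroth moments are bounded by $\|a^0\|_0$ and $\|b^0\|_0$. Integrating in time, and using $K_{i,j}\ge\rho_i\rho_j$ and $J_{i,j}\le\rho_i\rho_j$, we also get
\[
\int_0^\infty\Big[\Big(\sum_i\rho_ia_i^N\Big)^2+\Big(\sum_i\rho_i b_i^N\Big)^2\Big]dt\le 2\big(\|a^0\|_0+\|b^0\|_0\big).
\]
This is an $L^2$-in-time bound on the $\rho$-moments, and it is uniform in $N$.

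\textbf{Step 2: first moment for $t>0$.} Mass is conserved by coagulation and destroyed by annihilation. Hence $\frac{d}{dt}\|a^N\|_1\le0$ in the truncated system, and $t\|a^N(t)\|_1\le\int_0^t\|a^N(s)\|_1\,ds$. Since $i\le\rho_i/S$, the right-hand side is at most
\[
S^{-1}\int_0^t\sum_i\rho_ia^N_i\,ds\le S^{-1}\sqrt{t}\,\Big(\int_0^t\Big(\sum_i\rho_ia_i^N\Big)^2ds\Big)^{1/2}\le C\sqrt t.
\]
Therefore $\|a^N(t)\|_1\le C t^{-1/2}$ uniformly in $N$, and likewise for $b^N$. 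This is the key new estimate. Combined with $\sum\rho_ia_i^N\in L^2(0,T)$ uniformly, it also gives $L^1(0,T)$ bounds on $\sum_jK_{i,j}a^N_j$ and $\sum_jJ_{i,j}b_j^N$ for each fixed $i$, because $K_{i,j}\le(1+\kappa)\rho_i\rho_j$ and $J_{i,j}\le\rho_i\rho_j$.

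\textbf{Step 3: compactness and passage to the limit.} For each $i$, the derivatives $\frac{d}{dt}a_i^N$ are bounded in $L^1(0,T)$, so the total variation of $a^N_i$ on $[0,T]$ is bounded. Helly's theorem and a diagonal argument then give pointwise convergence of a subsequence, $a^N_i\to a_i$ and $b^N_i\to b_i$. Fatou's lemma transfers the bounds $\|a(t)\|_0\le\|a^0\|_0$ and $\|a(t)\|_1\le Ct^{-1/2}$ to the limit, which proves \eqref{eqn 2.8}.

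\textbf{Main obstacle: the infinite sums.} The hard step is passing to the limit in $\int_0^t a_i^N\sum_j K_{i,j}a_j^N\,ds$ and in the $J$-term, because $K_{i,j}$ may grow like $j$. I would split the sum at $j\le M$ and $j>M$. For the tail I would use the weighted $L^2$ bound together with uniform integrability near $t=0$ coming from Step 1. If $\rho_j/j$ is unbounded, a superlinear moment estimate is needed. The plan there is to run the argument of Step 2 with weight $\rho_i$ in place of $i$, or to use a de la Vallée-Poussin type moment derived from the dissipation integral. With that, the tail $\sum_{j>M}\rho_ja_j^N$ becomes small in $L^1(\delta,t)$ uniformly in $N$, while the interval $(0,\delta)$ is controlled by $\sqrt\delta$ times the $L^2$ bound. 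Dominated convergence on the finite part then shows that the limit satisfies \eqref{eqn 2.1}--\eqref{eqn 2.2}. Continuity of $a_i$ and $b_i$ follows from these integral identities.
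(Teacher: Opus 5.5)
There is a genuine gap, at exactly the step you flag as the main obstacle. The rest is sound and follows the paper's route: truncation, the $L^2$-in-time dissipation bound on $\sum_i\rho_i a_i^N$, the smoothing bound $\sum_i i\,a_i^N(t)\le Ct^{-1/2}$ via monotonicity, compactness, and control of $(0,\delta)$ by $\sqrt{\delta}$. That last device is equivalent to the paper's integration from $\tau$ followed by $\tau\to0$. One small fix in Step 2: the dissipation bound only involves $\sum_{i\le N}\rho_i a_i^N$. So use the truncated first moment $\sum_{i\le N} i\,a_i^N$, which is also non-increasing (take $h_i=i\mathbf{1}_{i\le N}$ in \Cref{moment estimate lemma}), not the full sum up to $2N$.

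The gap is uniform-in-$N$ smallness of $\sum_{j>M}\rho_j a_j^N$ on $(\delta,t)$ when $\rho_j/j$ is unbounded, which \eqref{eqn 2.4} allows (e.g.\ $\rho_i=i^2$). Rerunning Step 2 with weight $\rho_i$ fails. For superadditive $\rho$ one has $\rho_{i+j}>\rho_i+\rho_j$, so coagulation increases $\sum_i\rho_i a_i^N$ and the monotonicity behind Step 2 is lost. Even a pointwise bound on that moment would give only boundedness of its tail, not uniform smallness. The $L^2$ dissipation bound and the first-moment bound do not suffice on their own either. The time-independent sequence $a_j=\rho_N^{-1}\mathbf{1}_{j=N}$ satisfies both on $(\delta,t)$ uniformly in $N$, yet $\sum_{j>M}\rho_j a_j=1$ for every $N>M$.

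What is missing is the extra dissipation from testing with a concave sublinear weight; this is \eqref{half moment bound}. With $h_i=i^{1/2}\mathbf{1}_{i\le N}$ one has $h_i+h_j-h_{i+j}\ge\frac12\min\{i,j\}^{1/2}$. Combining this with $K_{i,j}\ge\rho_i\rho_j$ and restricting to $i,j\ge H$ gives
\[
\int_\delta^t\Big(\sum_{j=H}^{N}\rho_j a_j^N(s)\Big)^2\,ds\le 4H^{-1/2}\sum_{i=1}^{N} i^{1/2}a_i^N(\delta)\le 4H^{-1/2}\sum_{i=1}^{N} i\,a_i^N(\delta)\le C\,\delta^{-1/2}H^{-1/2}.
\]
The last inequality is your Step 2 estimate, and this is where it is really needed: for $a^0\in X_0^+$ the half moment at time $0$ may be infinite. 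Since $\sum_j K_{i,j}a_j^N\le(1+\kappa)\rho_i\sum_j\rho_j a_j^N$, this controls the tail of the coagulation loss term in $L^2(\delta,t)$. If your ``de la Vall\'ee-Poussin type moment'' meant such a concave test weight, that is the right idea. It must be derived from the equation, though, not from the dissipation integral.

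The annihilation term needs no $\rho$-tail at all. Use \eqref{double sum first moment for B} with $\mu=1$ from time $\delta$, the symmetry of $J$, and your Step 2 estimate. This gives $\int_\delta^t\sum_{j\ge l}J_{i,j}a_i^N b_j^N\,ds\le l^{-1}\sum_{j\le N} j\,b_j^N(\delta)\le C\,l^{-1}\delta^{-1/2}$, as in the paper.
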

 We prove \Cref{finite particle theorem} along the lines of the argument given in \cite[Theorem 2.2]{laurenccot1999global}. Following the methodology of \Cref{finite mass theorem}, we establish \Cref{finite particle theorem} by employing the truncation procedure used for \Cref{finite mass theorem}. The existence of solutions to the truncated system \eqref{truncated system} is then established via the Picard-Lindel\"of theorem. Next, we establish uniform bounds on the time derivatives of the truncated solutions. These bounds, together with a compact embedding result, allow us to extract a convergent subsequence whose limit constitutes a solution to \eqref{eqn 1.1}--\eqref{eqn 1.3}, according to \Cref{Definition}.
 
The preceding existence results do not apply to the kernel $K_{i,j}=i+j$ for $i,j\ge 1$. We therefore establish the following additional existence result.
\begin{theorem}\label{theorem 2.5}
	Assume that the symmetry conditions \eqref{symmetry conditions} hold and that there exists a constant $\kappa_{1}>0$ such that $ K_{i,j} \leq \ \kappa_{1}(i+j)$ for all $i,j \geq 1$. Consider $a^{0}\in X^{+}_{1} \ \text{and} \ b^{0} \in X^{+}_{1}$. Then a solution $c=(a,b)$ to \eqref{eqn 1.1}--\eqref{eqn 1.3} exists on $[0,\infty)$, as defined in \Cref{Definition}, such that $(a(t),b(t))\in X^{+}_{1}\times X^{+}_{1}$ for every $t\ge 0$. Moreover,
	\begin{align}
	\|a(t)\|_{1} \leq \|a^{0}\|_{1} \quad \text{and} \quad \|b(t)\|_{1} \leq \|b^{0}\|_{1}  \quad \text{for each}\ \  t\geq0.
	\end{align}
\end{theorem}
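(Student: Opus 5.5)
We follow the truncation scheme used for \Cref{finite mass theorem}, but replace the structural assumption \eqref{eqn 2.3} by the linear growth bound $K_{i,j}\le \kappa_1(i+j)$. We impose no condition on $J_{i,j}$, as in \Cref{finite mass theorem}.

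\textbf{Truncated system and basic bounds.} For $N\ge 2$ we set $K^N_{i,j}=K_{i,j}$ and $J^N_{i,j}=J_{i,j}$ when $\max\{i,j\}\le N$, and zero otherwise. We consider the finite system \eqref{truncated system} for $(a^N_i,b^N_i)_{1\le i\le N}$ with initial data $a^0_i,b^0_i$, $i\le N$. By Picard--Lindel\"of it has a unique local solution, and non-negativity follows from the quasi-positive structure of the right-hand side. For a weight $(g_i)$ we use the weak form
\[
\frac{d}{dt}\sum_{i=1}^N g_i a^N_i=\frac12\sum_{i+j\le N}(g_{i+j}-g_i-g_j)K_{i,j}a^N_ia^N_j-\frac12\sum_{i+j>N}(g_i+g_j)K_{i,j}a_i^Na_j^N-\sum_{i,j\le N}g_iJ_{i,j}a_i^Nb_j^N .
\]
Taking $g_i=i$ gives $\|a^N(t)\|_1\le\|a^0\|_1$, and likewise for $b^N$. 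This yields global existence and the uniform bound $a^N_i(t)\le \|a^0\|_1/i$.

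\textbf{Compactness.} The delicate point is the uniform control of the derivatives. Since $K_{i,j}\le\kappa_1(i+j)$, we get
\[
\sum_j K_{i,j}a^N_j\le \kappa_1\bigl(i\|a^N\|_0+\|a^N\|_1\bigr)\le 2\kappa_1 i\|a^0\|_1 .
\]
Hence the coagulation contributions to $|da^N_i/dt|$ are bounded by a constant $C_i$ independent of $N$. The annihilation term has no a priori bound, because $J$ is arbitrary. We therefore adapt \cite{laurenccot2024redner}: the annihilation term has a sign and appears only as a loss. So
\[
a^N_i(t)+\int_0^t a^N_i\sum_j J_{i,j}b^N_j\,ds = a^0_i+\int_0^t(\text{coagulation terms})\,ds ,
\]
where the right-hand side is bounded by $a_i^0+C_it$. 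This gives a uniform $L^1(0,T)$ bound on $\frac{d}{dt}a^N_i$, i.e.\ a uniform bound on the total variation of $a^N_i$ on $[0,T]$. Helly's selection theorem and a diagonal argument then give a subsequence with $a^N_i(t)\to a_i(t)$ and $b^N_i(t)\to b_i(t)$ for every $t\ge0$ and $i\ge1$. Fatou's lemma gives $\|a(t)\|_1\le\|a^0\|_1$ and $\|b(t)\|_1\le\|b^0\|_1$.

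\textbf{Passage to the limit.} For the coagulation loss term we split $\sum_j K_{i,j}a^N_j$ into $j\le M$ and $j>M$. The tail is bounded by
\[
\kappa_1\sum_{j>M}(i+j)a^N_j\le \kappa_1(i/M+1)\sum_{j>M}j\,a^N_j .
\]
This tail does not become uniformly small from the first moment alone, so here we need a uniform superlinear moment. By the de la Vallée-Poussin theorem we choose a convex $\Phi$ with $\Phi(r)/r\to\infty$ and $\sum_i\Phi(i)(a^0_i+b^0_i)<\infty$, with $\Phi'$ concave (as in \cite[Theorem 2.3]{laurenccot2002discrete}). With $g_i=\Phi(i)$, the coagulation term $(\Phi(i+j)-\Phi(i)-\Phi(j))K_{i,j}$ is controlled by $C\,(i\Phi(j)/j+j\Phi(i)/i)(i+j)$, which is bounded by a multiple of $i\Phi(j)+j\Phi(i)$. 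The annihilation and truncation terms are nonpositive. Gronwall's lemma then gives $\sup_{N}\sup_{t\le T}\sum_i\Phi(i)a^N_i(t)<\infty$, so
\[
\sum_{j>M}j\,a^N_j\le \sup_{r\ge M}\frac{r}{\Phi(r)}\,C_T\to0 \quad\text{as } M\to\infty,
\]
uniformly in $N$. Passing to the limit in $\sum_j K_{i,j}a_j^N$ follows.

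\textbf{Main obstacle.} The annihilation term $a_i^N\sum_jJ_{i,j}b^N_j$ is the main obstacle, since $J$ may be unbounded and its tails are not controlled by moments. We handle it as in \Cref{finite mass theorem} and \cite{laurenccot2024redner}. Pointwise convergence and Fatou's lemma give only the inequality ``$\le$'' for the annihilation integral. The reverse inequality comes from the weak formulation with non-negative test functions, combined with the identity obtained above in which annihilation enters only through a nonnegative loss integral: the loss integrals converge because every other term in that identity converges. We expect this step, rather than the coagulation tail, to require the most care. The integrability requirements of \Cref{Definition} follow from the bounds already obtained.
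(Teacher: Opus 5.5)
Your handling of the coagulation terms is in line with \cite[Theorem~2.3]{laurenccot2002discrete}, to which the paper defers. This covers the uniform $L^1(0,T)$ bounds on $\frac{d}{dt}a_i^N$ obtained from the sign of the annihilation term, Helly's theorem, and a de la Vall\'ee-Poussin function $\Phi$ that makes $\sum_{j>M}j\,a_j^N$ small uniformly in $N$.

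There is a slip in the $\Phi$-estimate. The quantity $C\bigl(i\Phi(j)/j+j\Phi(i)/i\bigr)(i+j)$ is \emph{not} bounded by a multiple of $i\Phi(j)+j\Phi(i)$: take $j=1$ and $i\to\infty$, and the term $i^2\Phi(1)$ is not $O(\Phi(i))$ when $\Phi$ grows slowly. The correct bound uses the subadditivity of the concave $\Phi'$ together with $r\Phi'(r)\le 2\Phi(r)$. It reads $\Phi(i+j)-\Phi(i)-\Phi(j)\le 2\min\{j\Phi(i)/i,\,i\Phi(j)/j\}$. Multiplying by $i+j$ gives at most $4\bigl(i\Phi(j)+j\Phi(i)\bigr)$, so your Gronwall step survives.

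The genuine gap is where you expect it: identifying the limit of the annihilation term. You say you handle it as in \Cref{finite mass theorem}, but the mechanism you describe is not the one used there and does not suffice. Your identity only shows that $\int_0^t a_i^N\sum_j J_{i,j}b_j^N\,ds$ converges to \emph{some} $L_i(t)$. Fatou then gives $\int_0^t a_i\sum_j J_{i,j}b_j\,ds\le L_i(t)$. Convergence of the loss integrals says nothing about whether the defect vanishes. Part of the annihilation of $a_i$ could be carried by $b$-clusters escaping to large sizes, where $J_{i,j}$ may be arbitrarily large. The $\Phi$-moment of $b^N$ does not help, since $J_{i,j}$ need not be $O(j)$. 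Testing the weak formulation with non-negative functions cannot, by itself, produce the reverse inequality.

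What is needed is a tail bound uniform in $N$, which is what the paper uses in \Cref{mainconvolemmaforthmi}. It comes from the term you discarded in the first-moment identity. Take $g_i=i$ in the equation for $b^N$ and keep the annihilation dissipation; this gives $\int_0^T\sum_{i,j\le N}i\,J_{i,j}b_i^Na_j^N\,ds\le\|b^0\|_1$. The symmetry $J_{i,j}=J_{j,i}$ then gives, for fixed $i$,
\[
\int_0^T\sum_{j=l}^{N}J_{i,j}\,a_i^N b_j^N\,ds\le\frac1l\int_0^T\sum_{j=1}^{N}j\,b_j^N\sum_{k=1}^{N}J_{j,k}\,a_k^N\,ds\le\frac{\|b^0\|_1}{l}.
\]
Combine this with dominated convergence for the finitely many $j<l$ and with the integrability of $\sum_j J_{i,j}a_ib_j$, which follows from Fatou. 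This yields convergence in $L^1(0,T)$ of the annihilation terms with no growth condition on $J$. Without this estimate, or an equivalent one, your passage to the limit in \eqref{eqn 2.1}--\eqref{eqn 2.2} is incomplete.
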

We finally supplement the above existence results with the following uniqueness result, which requires stronger assumptions on the coagulation and annihilation kernels.
	\begin{theorem}\label{Uniqueness theorem}
		Let $\alpha \in [0,\tfrac{1}{2}]$, and assume that the coagulation and annihilation rates satisfy \eqref{symmetry conditions}, 
$K_{i,j} \leq\kappa_{2}(ij)^{\alpha}$ and $J_{i,j} \leq \kappa_{3}(ij)^{\alpha}$ for all $i,j \geq 1$ and for some constants $\kappa_{2}, \kappa_{3} > 0$.
If $a^{0}, b^{0} \in X_{1}^{+}$, then \eqref{eqn 1.1}--\eqref{eqn 1.2} possesses a unique solution $c=(a,b)$ as defined in \Cref{Definition}, with $(a(t),b(t)) \in X^{+}_{1} \times X^{+}_{1}$ for every $t \geq 0$.
\end{theorem}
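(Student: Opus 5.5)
Existence comes from \Cref{theorem 2.5}. For $\alpha\le\tfrac12$ we have $(ij)^{\alpha}\le \sqrt{ij}\le \tfrac12(i+j)$, so $K_{i,j}\le \kappa_2(i+j)/2$. Hence \Cref{theorem 2.5} gives a global solution with $(a(t),b(t))\in X_1^+\times X_1^+$ and $\|a(t)\|_1\le\|a^0\|_1$, $\|b(t)\|_1\le\|b^0\|_1$. The real content is uniqueness, which I would prove by a weighted $\ell^1$ stability estimate in the spirit of the classical uniqueness proofs for discrete coagulation (Ball--Carr type). Moreover, the estimate should extend to continuous dependence on the initial data.

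\textbf{Setup and weight.} Let $c=(a,b)$ and $\hat c=(\hat a,\hat b)$ be two solutions with the same initial data, both with bounded first moments; I would first check that any solution in the sense of \Cref{Definition} with values in $X_1^+$ has locally bounded mass, which holds for the one built above. Set $e_i=a_i-\hat a_i$, $f_i=b_i-\hat b_i$, and use the weight $w_i=i^{\alpha}$, so $w_i\le i$ and moments of order $\alpha$ are controlled by mass. Consider
\[
u(t)=\sum_{i\ge1} i^{\alpha}\bigl(|e_i(t)|+|f_i(t)|\bigr).
\]
I would work first with truncated sums $u_N=\sum_{i\le N}$. Each $e_i$ is absolutely continuous, so $|e_i|$ is too, and $\frac{d}{dt}|e_i|=\operatorname{sgn}(e_i)\,\dot e_i$ a.e.

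\textbf{Coagulation part.} I would write the differences of the quadratic terms as $a_ja_k-\hat a_j\hat a_k=e_j a_k+\hat a_j e_k$ and symmetrize using $K_{j,k}=K_{k,j}$. The standard computation gives, for the $A$-coagulation contribution,
\[
\tfrac12\sum_{j,k}K_{j,k}\,(e_ja_k+\hat a_je_k)\bigl(w_{j+k}\sigma_{j+k}-w_j\sigma_j-w_k\sigma_k\bigr),\qquad \sigma_i=\operatorname{sgn}(e_i).
\]
In the term multiplying $e_j$, the factor $-w_j\sigma_je_j=-w_j|e_j|$. Subadditivity $(j+k)^{\alpha}\le j^{\alpha}+k^{\alpha}$ then bounds the bracket times $e_j$ by $2w_k|e_j|$. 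This yields a contribution at most
\[
\kappa_2\sum_{j,k}(jk)^{\alpha}k^{\alpha}|e_j|(a_k+\hat a_k)\le \kappa_2\Bigl(\sum_j j^{\alpha}|e_j|\Bigr)\bigl(\|a\|_1+\|\hat a\|_1\bigr),
\]
using $k^{2\alpha}\le k$ precisely because $\alpha\le\tfrac12$. This is where the restriction on $\alpha$ enters. The $B$-coagulation terms are handled identically.

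\textbf{Annihilation part.} For $-a_i\sum_j J_{i,j}b_j$, I would write $a_ib_j-\hat a_i\hat b_j=e_ib_j+\hat a_if_j$. The term $-\sigma_iw_iJ_{i,j}e_ib_j\le0$ can be dropped. The cross term is bounded by $\kappa_3\sum_{i,j}(ij)^{\alpha}i^{\alpha}\hat a_i|f_j|\le\kappa_3\|\hat a\|_1\sum_j j^{\alpha}|f_j|$, again by $i^{2\alpha}\le i$. The symmetric $B$-equation gives the analogous bound. Altogether, $u(t)\le u(0)+C\int_0^t u(s)\,ds$ with $C$ depending on $\kappa_2,\kappa_3,\|a^0\|_1,\|b^0\|_1$, and Gronwall's lemma gives $u\le u(0)e^{Ct}$. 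With equal initial data, $u\equiv0$, which gives uniqueness, and the same estimate yields continuous dependence.

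\textbf{Main obstacle.} The difficulty is justifying the passage $N\to\infty$. The truncated computation produces boundary terms, namely gain terms from pairs with $j+k>N$ that leave the truncated range. These must be shown to be nonnegative-negligible or to vanish in the limit. The $L^1$-in-time integrability of $\sum_j (jk)^{\alpha}a_j\,a_k$ follows from the mass bounds and $\alpha\le\tfrac12$, since $\sum_{j,k}(jk)^{\alpha}a_ja_k\le\|a\|_{\alpha}^2\le\|a\|_1^2$. So the tails $\sum_{j+k>N}$ tend to zero by dominated convergence. I would carry out this truncation carefully, taking absolute values of the boundary terms and bounding them by tails of these integrable double series, before invoking Gronwall.
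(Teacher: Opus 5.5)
Your proposal is correct and is essentially the paper's argument: Proposition~\ref{prop 5.2} runs the same Ball--Carr weighted $\ell^1$ estimate with weight $i^{\beta}$, where $\alpha\le\beta$ and $\alpha+\beta\le1$ (your choice $\beta=\alpha$ is admissible). Like you, it disposes of the boundary term $W_n$ by dominated convergence, discards the non-positive annihilation loss term, closes with Gr\"onwall, and obtains uniqueness as the case of equal initial data. Three points you make explicit are left implicit in the paper: existence via Theorem~\ref{theorem 2.5}; the mass bound $\|a(t)\|_1\le\|a^0\|_1$ for an arbitrary solution, which follows by summing $i\times$\eqref{eqn 2.1} over $i\le n$; and the domination of the tail of $W_n$, which should carry the extra weight, i.e.\ $\sum_{i,j}i^{2\alpha}j^{\alpha}a_ia_j\le\|a\|_1^2$, still valid for $\alpha\le\tfrac12$.
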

\section{Approximating systems}\label{Section Approx systems}
\noindent The existence of a solution to \eqref{eqn 1.1}--\eqref{eqn 1.3} is established by constructing solutions to finite-dimensional truncations of the system and subsequently passing to the limit. More precisely, for $N\ge 2$ and $t>0$, we truncate the system~\eqref{eqn 1.1}--\eqref{eqn 1.3} to obtain a finite-dimensional system consisting of $4N$ equations, given by 
	\begin{align}
		\label{truncated system}
		\begin{cases}
			\displaystyle \frac{d a_i^{N}}{dt} = \frac{1}{2} \sum_{j=1}^{i-1} K_{j,i-j}^{N}~a_{j}^{N} a_{i-j}^{N}
			- \sum_{j=1}^{N} K_{i,j}^{N}~a_{i}^{N} a_{j}^{N}
			- \sum_{j=1}^{N} J_{i,j}^{N}~a_i^{N} b_{j}^{N}, &  \ \    1 \leq i \leq N, \\[1em]
			\displaystyle \frac{d a_i^{N}}{dt} = \frac{1}{2} \sum_{j=i-N}^{N} K_{j,i-j}^{N}~a_{j}^{N} a_{i-j}^{N}, & \  \  N+1 \leq i \leq 2N, \\[1em]
			\displaystyle \frac{d b_i^{N}}{dt} = \frac{1}{2} \sum_{j=1}^{i-1} K_{j,i-j}^{N}~b_{j}^{N} b_{i-j}^{N}
			- \sum_{j=1}^{N} K_{i,j}^{N}~b_{i}^{N} b_{j}^{N}
			- \sum_{j=1}^{N} J_{i,j}^{N}~b_i^{N} a_{j}^{N}, & \   \   1 \leq i \leq N, \\[1em]
			\displaystyle \frac{d b_i^{N}}{dt} = \frac{1}{2} \sum_{j=i-N}^{N} K_{j,i-j}^{N}~b_{j}^{N} b_{i-j}^{N}, & \   \   N+1 \leq i \leq 2N, \\[1em]
			a_i^{N}(0) = a_i^0, \quad b_i^{N}(0) = b_i^0, & \  \   1\leq i\leq 2N,
		\end{cases}
	\end{align}
    where
    \begin{align*}
        K_{i,j}^{N}=\begin{cases}
             K_{i,j}, &\quad \max \{i, j\}\leq N \\
             0, & \quad \max \{i, j\}> N
        \end{cases}
        \quad \text{and}\quad J_{i,j}^{N}=\begin{cases}
             J_{i,j}, &\quad \max \{i, j\}\leq N \\
             0, & \quad \max \{i, j\}> N.
        \end{cases}
    \end{align*}
    
	The proposition below shows that \eqref{truncated system} admits a unique local non-negative solution. 
    \begin{prop}\label{Locexistenceforapproximatesystem}
For $N \ge 2$, the truncated system~\eqref{truncated system} admits a unique local non-negative solution $\left((a^{N}_{i})_{1\le i \le 2N}, (b^{N}_{i})_{1\le i \le 2N}\right)$, defined on $[0,T_{c})$ for some $T_{c}\in (0,\infty].$
	\end{prop}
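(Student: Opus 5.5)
The plan is to view \eqref{truncated system} as an autonomous ODE system $\frac{dY}{dt}=F(Y)$ in $\mathbb{R}^{4N}$, with
\[
Y=\bigl((a_i^N)_{1\le i\le 2N},(b_i^N)_{1\le i\le 2N}\bigr).
\]
Every component of $F$ is a finite sum of quadratic monomials in the entries of $Y$, with the fixed real coefficients $K^N_{i,j}$ and $J^N_{i,j}$. Hence $F$ is a polynomial map, so $F\in C^\infty(\mathbb{R}^{4N};\mathbb{R}^{4N})$, and in particular it is locally Lipschitz. The Picard--Lindel\"of theorem then yields a unique maximal solution $Y\in C^1([0,T_c);\mathbb{R}^{4N})$ for some $T_c\in(0,\infty]$, starting from the initial datum $(a_i^0,b_i^0)_{1\le i\le 2N}$. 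This gives existence and uniqueness; what remains is non-negativity.

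For non-negativity, I would first note the structure of the equations. For $1\le i\le N$, the equation for $a_i^N$ has the form
\[
\frac{da_i^N}{dt}=G_i(t)-a_i^N(t)\,\Lambda_i(t),
\]
where
\[
\Lambda_i=\sum_{j=1}^N\bigl(K_{i,j}a_j^N+J_{i,j}b_j^N\bigr)
\]
is continuous on $[0,T_c)$. The gain term $G_i=\frac12\sum_{j=1}^{i-1}K_{j,i-j}a_j^Na_{i-j}^N$ involves only indices smaller than $i$. For $N+1\le i\le 2N$, the equation is pure gain, again built from indices at most $N<i$. The same holds for $b^N$.

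I would argue by induction on $i$, treating the $a$ and $b$ families simultaneously. Integrating factors give
\[
a_i^N(t)=e^{-\int_0^t\Lambda_i}\,a_i^0+\int_0^t e^{-\int_s^t\Lambda_i}\,G_i(s)\,ds .
\]
For $i=1$ the gain vanishes, so $a_1^N(t)=a_1^0e^{-\int_0^t\Lambda_1}\ge0$. If $a_j^N,b_j^N\ge0$ on $[0,T_c)$ for all $j<i$, then $G_i\ge0$ because $K\ge0$, and the formula gives $a_i^N\ge0$. The indices $N+1\le i\le 2N$ are handled the same way with $\Lambda_i\equiv0$.

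The point needing care is that $\Lambda_i$ itself may contain negative factors, since the sign of $b_j^N$ for large $j$ is not yet known. The integrating-factor formula does not need $\Lambda_i\ge0$, though, only its continuity, because the exponential factor is positive regardless of sign. The induction therefore closes without any circularity, and this is the main (minor) obstacle to check.
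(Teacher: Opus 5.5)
Your proof is correct. Existence and uniqueness are handled as in the paper: the right-hand side is polynomial, hence locally Lipschitz, and Picard--Lindel\"of gives a unique solution on a maximal interval $[0,T_c)$. For non-negativity, however, you take a genuinely different route. The paper uses a quasi-positivity argument. It adds $\epsilon>0$ to every equation and notes that at a time $\gamma$ where all components are non-negative and $a^N_{m_0}(\gamma)=0$, the perturbed derivative $\dot a^{\epsilon,N}_{m_0}(\gamma)$ is positive. It then lets $\epsilon\downarrow 0$ using continuous dependence on the data. You instead use two structural facts. First, the coagulation gain is lower-triangular: for $i\le N$ it involves only sizes $<i$, and for $N+1\le i\le 2N$ only sizes $\le N$. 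Second, the loss is linear in $a_i^N$ with a continuous, possibly sign-indefinite, rate $\Lambda_i$. The variation-of-constants formula plus induction on $i$ then gives $a_i^N,b_i^N\ge0$ directly. Your argument is more elementary and self-contained, since it needs no continuous dependence on parameters. It also sidesteps two delicate points of the perturbation argument as written. The interval $(\gamma,\gamma+\delta)$ on which $a^{\epsilon,N}_{m_0}>0$ comes only from the sign of a derivative at $\gamma$, so it could a priori shrink as $\epsilon\downarrow0$. And the configuration of a zero at $\gamma$ followed by strict negativity on a whole interval is assumed rather than derived. Your formula also gives a little more, e.g. $a_i^N(t)>0$ for all $t$ whenever $a_i^0>0$. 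The paper's approach, on the other hand, never uses the triangular structure. It only needs each component $F_k$ of the vector field to satisfy $F_k(Y)\ge0$ whenever $Y\ge0$ and $Y_k=0$. So, once made rigorous, it extends to systems in which the gain for size $i$ also involves larger sizes, such as truncated coagulation--fragmentation systems, where your induction on $i$ would no longer close.
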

	\begin{proof}\Cref{Locexistenceforapproximatesystem}~is an immediate consequence of the Picard--Lindel\"of theorem for systems of ordinary differential equations. For completeness, we present the main  steps of the proof.  For  $k = 1, 2, 3, 4$ and  $i = 1, 2, \ldots, 2N$, with $N \ge 2$, define 
	$f_{i,k}:\mathbb{R}^{2N}\times\mathbb{R}^{2N}\rightarrow \mathbb{R}$ as follows:
		\begin{align*}
			f_{i, 1}(x^{N},y^{N}) = & {\frac{1}{2} \ \sum_{j=1}^{i-1} K_{j,i-j}^{N}~x_{j}^N x_{i-j}^N} {- \sum_{j=1}^{N}  K_{i,j}^{N}~ x_{i}^N x_{j}^N   - \sum_{j=1}^{N} J_{i,j}^{N}~x_i^N y_{j}^N,}\ 1 \leq i \leq N, \\
			f_{i, 2}(x^{N}, y^{N}) = & \frac{1}{2}\ \sum_{j=i-N}^{N}K_{j,i-j}^{N}~x_{j}^N x_{i-j}^N,
			\  \ N+1\leq i \leq 2N, \\
			f_{i, 3}(x^{N},y^{N}) =& {\frac{1}{2}\ \sum_{j=1}^{i-1} K_{j,i-j}^{N}~y_{j}^N y_{i-j}^N} 
			- \sum_{j=1}^{N}  K_{i,j}^{N}~ y_{i}^N y_{j}^N - \sum_{j=1}^{N} J_{i,j}^{N}~y_i^N x_{j}^N,\ \ 1 \leq i \leq N,\\
               \text{and}\;\;f_{i, 4}(x^{N},y^{N})  = &\frac{1}{2}\ \sum_{j=i-N}^{N} K_{j,i-j}^{N}~y_{j}^N y_{i-j}^N,\;\;N+1\leq i \leq 2N,\\
\text{for all}~~({x^{N},y^{N})} \in \mathbb{R}^{2N}\times\mathbb{R}^{2N},&  ~~  \text{where} ~ x^{N} = (x_{1}^{N}, x_{2}^{N},\ldots,x^{N}_{2N})~\text{and}~y^{N} = (y_{1}^{N}, y_{2}^{N},\ldots,y_{2N}^{N}).
\end{align*}
Now, \eqref{truncated system} can be written as		
\begin{align*}
\frac{da_{i}^{N}}{dt}
&=
\begin{cases}
f_{i,1}\left(a^{N},b^{N}\right), & 1\le i\le N,\\
f_{i,2}\left(a^{N},b^{N}\right), & N+1\le i\le 2N
\end{cases}
\\[1ex]
\text{and}\quad\frac{db_{i}^{N}}{dt}
&=
\begin{cases}
f_{i,3}\left(a^{N},b^{N}\right), & 1\le i\le N,\\
f_{i,4}\left(a^{N},b^{N}\right), & N+1\le i\le 2N,
\end{cases}
\end{align*}
        where
      \begin{align*}a^{N} = (a_{1}^{N}, a_{2}^{N},\ldots,a^{N}_{2N})\quad \text{and}\quad b^{N} = (b_{1}^{N}, b_{2}^{N},\ldots,b_{2N}^{N}). \end{align*}
		Since the $f_{i,k}$ for $k =1, 2, 3, 4$ are polynomials, they are locally Lipschitz continuous. Hence, the Picard--Lindel\"of theorem provides a unique solution $\left((a_i^N)_{i=1}^{2N}, (b_i^N)_{i=1}^{2N}\right)$
 on a maximal time interval ~$I = [0, T_{c})$, where $T_{c}\in (0,\infty].$ Next, we proceed to prove that the solution obtained for \eqref{truncated system} is non-negative. For this purpose, we adopt the argument developed in \cite[Proposition 2.3]{MR3135638}. Suppose there exists $\gamma \in I$ such that $a_{i}^{N}(\gamma)$ $\geq$ 0 for all $i = 1,\ldots, 2N,$ and such that $a_{m_{o}}^{N}(\gamma )= 0$, while $a_{m_{o}}^{N}(t) < 0$ for $t \in (\gamma,\gamma+\eta)$ where $m_{o} \in \{1, 2, \ldots, 2N\}$ and  $0<\eta< T_{c}-\gamma$. For $\epsilon > 0$, we consider the initial value problem  
    \begin{align}
		\label{perturbedfinitesystem}
		\begin{cases} 
			\displaystyle \dot a_i^{\epsilon,N} = \frac{1}{2} \sum_{j=1}^{i-1} K_{j,i-j}^{N}~a_{j}^{\epsilon,N} a_{i-j}^{\epsilon,N}
			- \sum_{j=1}^{N} K_{i,j}^{N}~a_{i}^{\epsilon,N} a_{j}^{\epsilon,N}
			- \sum_{j=1}^{N} J_{i,j}^{N}~a_i^{\epsilon,N} b_{j}^{\epsilon,N} + \epsilon, &  \ \    1 \leq i \leq N, \\
			\displaystyle \dot a_i^{\epsilon,N} = \frac{1}{2} \sum_{j=i-N}^{N} K_{j,i-j}^{N}~a_{j}^{\epsilon,N} a_{i-j}^{\epsilon,N} + \epsilon, & \  \  N+1 \leq i \leq 2N, \\
			\displaystyle \dot b_i^{\epsilon,N} = \frac{1}{2} \sum_{j=1}^{i-1} K_{j,i-j}^{N}~b_{j}^{\epsilon,N} b_{i-j}^{\epsilon,N}
			- \sum_{j=1}^{N} K_{i,j}^{N}~b_{i}^{\epsilon,N} b_{j}^{\epsilon,N}
			- \sum_{j=1}^{N} J_{i,j}^{N}~b_i^{\epsilon,N} a_{j}^{\epsilon,N} + \epsilon, & \   \   1 \leq i \leq N, \\
			\displaystyle \dot b_i^{\epsilon,N} = \frac{1}{2} \sum_{j=i-N}^{N} K_{j,i-j}^{N}~b_{j}^{\epsilon,N} b_{i-j}^{\epsilon,N} + \epsilon, & \  \  N+1 \leq i \leq 2N, \\
		a_{i}^{\epsilon,N}(\gamma) = a_{i}^{N}(\gamma), \quad b_{i}^{\epsilon,N}(\gamma) = b_i^{N}(\gamma), & \  \   1\leq i\leq 2N.
		\end{cases}
	\end{align}
The Picard--Lindel\"of theorem ensures that \eqref{perturbedfinitesystem} possesses a unique solution $\left((a^{\epsilon,N}_{i})_{i=1}^{2N},(b^{\epsilon,N}_{i})_{i=1}^{2N}\right)$ on a maximal time interval $I_{\epsilon}=[\gamma,\gamma + T_{\epsilon,c})$, for some $T_{\epsilon,c}\in(0,\infty].$ From the first two equations of \eqref{perturbedfinitesystem}, we obtain $\dot a_{m_{o}}^{\epsilon,N}(\gamma)>0$, which implies that  $a_{m_{o}}^{\epsilon,N}{(t)}>0$ for all $t \in(\gamma,\gamma+\delta)$, where  $0<\delta<\min\{T_{\epsilon,c},\eta\}$. 
    Moreover, the solution to \eqref{perturbedfinitesystem} depends continuously on the data. Hence, as $\epsilon$ decreases to $0$, the solution of \eqref{perturbedfinitesystem} converges to the solution of \eqref{truncated system}; that is, $$a_{m_{0}}^{\epsilon,N}(t) \xrightarrow{\epsilon\downarrow 0} a_{m_{0}}^{N}(t) , \text{ for}~ t \in [\gamma, \gamma+ \delta),$$ which yields $a_{m_{0}}^{N}(t) \geq 0$ for $t \in [\gamma, \gamma+ \delta)$. This is in contradiction with the assumption that $a_{m_{0}}^{N}(t) < 0$  for $t \in (\gamma,\gamma+\eta)$. 
Hence, $a_{i}^{N}(t) \geq 0$ for all $t \in I$ and $1\leq i\leq 2N$. Similarly, we can show that $b_{i}^{N}(t) \geq 0$  for all $t \in I$ and $1\leq i\leq 2N$. The proof of~\Cref{Locexistenceforapproximatesystem} is thereby concluded.
    \end{proof}
	
	Now, we invoke the following lemma inspired by \cite[Lemma 3.1]{laurenccot1999global}, which will be used in the subsequent analysis to derive the required moment estimates.
	\begin{lemma} \label{moment estimate lemma}
Assume that \eqref{symmetry conditions} is satisfied. For $t \in [0,T_{c})$, $\sigma \in [0,t],$ and any finite family of real numbers $(h_{i})_{1\leq i \leq 2N}$, the following identities hold:
	\begin{align}
		\sum_{i=1}^{2N} h_{i} a_{i}^{N}(t) - \sum_{i=1}^{2N} h_{i} a_{i}^{N}(\sigma) = &  \ \frac{1}{2} \int_{\sigma}^{t} \sum_{i=1}^{N} \sum_{j=1}^{N}(h_{i+j} - h_{i}- h_{j})K_{i,j} a_{i}^{N}(s) a_{j}^{N}(s) ds \nonumber\\
		& -  \int_{\sigma}^{t} \sum_{i=1}^{N} \sum_{j=1}^{N} h_{i}  J_{i,j} a_{i}^{N}(s)  b_{j}^{N}(s) ds\label{moment estimate lemma for A}\\
	\text{and} \quad
		\sum_{i=1}^{2N} h_{i} b_{i}^{N}(t) - \sum_{i=1}^{2N} h_{i} b_{i}^{N}(\sigma) = &  \ \frac{1}{2} \int_{\sigma}^{t} \sum_{i=1}^{N} \sum_{j=1}^{N}(h_{i+j} - h_{i}- h_{j})K_{i,j} b_{i}^{N}(s) b_{j}^{N}(s) ds \nonumber\\
		& -  \int_{\sigma}^{t} \sum_{i=1}^{N} \sum_{j=1}^{N} h_{i}  J_{i,j} b_{i}^{N}(s)  a_{j}^{N}(s) ds.\label{moment estimate lemma for B}
	\end{align}
    \end{lemma}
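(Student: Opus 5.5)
The plan is to derive the identities directly from the truncated system~\eqref{truncated system}. We multiply each equation by $h_i$, sum over $1\le i\le 2N$, and integrate in time. By \Cref{Locexistenceforapproximatesystem}, the functions $a_i^N$ and $b_i^N$ are $C^1$ on $[0,T_c)$, since they solve an ODE system with polynomial right-hand side. All sums are finite, so for $0\le\sigma\le t<T_c$ we may write
\[
\sum_{i=1}^{2N} h_i a_i^N(t)-\sum_{i=1}^{2N} h_i a_i^N(\sigma)=\int_\sigma^t \sum_{i=1}^{2N} h_i \frac{da_i^N}{ds}(s)\,ds ,
\]
and interchanging sums and integrals needs no justification. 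It then remains to rewrite the integrand, which we split into a gain term, a coagulation loss term and an annihilation term.

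The key step is the gain term. We claim that
\[
\frac12\sum_{i=1}^{N}\sum_{j=1}^{i-1} h_iK^N_{j,i-j}a^N_ja^N_{i-j}+\frac12\sum_{i=N+1}^{2N}\sum_{j=i-N}^{N} h_iK^N_{j,i-j}a^N_ja^N_{i-j}=\frac12\sum_{j=1}^{N}\sum_{k=1}^{N} h_{j+k}K_{j,k}a^N_ja^N_k .
\]
To see this, set $k=i-j$.
\begin{itemize}
\item For $i\le N$, the range $1\le j\le i-1$ is exactly the set of pairs $(j,k)$ with $j,k\ge1$ and $j+k=i$. Both entries are automatically $\le N$, so $K^N_{j,k}=K_{j,k}$.
\item For $N+1\le i\le 2N$, the range $i-N\le j\le N$ is exactly the set of pairs with $j+k=i$ and $1\le j,k\le N$.
\end{itemize}
Since every pair $(j,k)\in\{1,\dots,N\}^2$ has $j+k\le 2N$, each such pair is counted exactly once as $i$ ranges over $1,\dots,2N$. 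This proves the claim.

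The coagulation loss term appears only for $i\le N$ and equals $\sum_{i,j=1}^{N} h_iK_{i,j}a^N_ia^N_j$. Using the symmetry $K_{i,j}=K_{j,i}$ from \eqref{symmetry conditions}, we exchange $i$ and $j$ and average, which rewrites it as $\frac12\sum_{i,j=1}^N (h_i+h_j)K_{i,j}a^N_ia^N_j$. Combining this with the gain term produces the factor $(h_{i+j}-h_i-h_j)$ in \eqref{moment estimate lemma for A}. The annihilation term is present only for $i\le N$ and gives $-\sum_{i,j=1}^N h_iJ_{i,j}a^N_ib^N_j$ directly, since $J^N_{i,j}=J_{i,j}$ for $i,j\le N$.

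Identity \eqref{moment estimate lemma for B} follows by the same computation with the roles of $a^N$ and $b^N$ exchanged. The annihilation term there is $-\sum_{i,j=1}^N h_iJ_{i,j}b^N_ia^N_j$, as stated; this form uses no symmetry of $J$. The only real obstacle is the index bookkeeping in the gain term, namely checking that the two ranges of $j$ in \eqref{truncated system} tile $\{1,\dots,N\}^2$ exactly once. Once that is verified, the rest is a finite-sum rearrangement.
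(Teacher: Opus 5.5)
Your proof is correct. The index bookkeeping for the gain term is right: with $k=i-j$, the two ranges of $j$ in \eqref{truncated system} tile $\{1,\dots,N\}^2$ exactly once. The symmetrization of the loss term via $K_{i,j}=K_{j,i}$ is right, and so is your remark that the annihilation term needs no symmetry of $J$. The paper gives no proof of its own and simply defers to \cite[Lemma 3.1]{laurenccot1999global}, whose argument is exactly this direct multiply, sum and integrate computation, so your approach is essentially the standard one.
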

		By substituting appropriate values for $(h_{i})_{1 \leq i \leq 2N}$ and applying some inequalities, we derive the following lemma. In particular, the next lemma establishes moment bounds needed for compactness arguments and convergence analysis.
	\begin{lemma}\label{lemma 3.3}
	Let the assumptions~\eqref{symmetry conditions} and \eqref{eqn 2.3} be satisfied. Then, for any $H \in \{1,\ldots, N\}$, $t \in [0,T_{c}),$ and $\sigma \in [0, t]$, we have
\begin{align}
	\sum_{i=1}^{N} ig_{i,j}^{N}(t) \leq \sum_{i=1}^{N} ig_{i,j}^{N}(\sigma)& \leq\sum_{i=1}^{N} ig_{i,j}^{0}, \ \text{for} \  j = 1,2  ,\label{first moment bound}\\
	\sum_{i=1}^{2N} g_{i,j}^{N}(t) + \frac{1}{2} \int_{0}^{t} \bigg|\sum_{i=1}^{N} \rho_{i} g_{i,j}^{N}(s) \bigg| ^2\, ds &\leq \sum_{i =1}^{2N} g_{i,j}^{0},  \ \text{for} \  j = 1,2  ,\label{zeroth moment bound}\\
\int_{\sigma}^{t} \sum_{i=1}^{N} \sum_{j=1}^{N} i^{\mu}  J_{i,j} a_{i}^{N}(s) b_{j}^{N}(s)\, ds &\leq \sum_{i=1}^{2N} i^{\mu} a^{N}_{i}(\sigma)\le \sum_{i=1}^{2N} i^{\mu} a^{0}_{i},  \ \text{for} \  \mu = 0,1,\label{double sum first moment for A}
\end{align}
\begin{align}
\int_{\sigma}^{t} \sum_{i=1}^{N} \sum_{j=1}^{N} i^{\mu}  J_{i,j} b_{i}^{N}(s) a_{j}^{N}(s)\,ds & \leq \sum_{i=1}^{2N} i^{\mu} b_{i}^{N}(\sigma) \le  \sum_{i=1}^{2N} i^{\mu} b_{i}^{0},  \ \text{for} \  \mu = 0,1,\label{double sum first moment for B}\\
	\int_{\sigma}^{t} \bigg| \sum_{i=H}^{N}\rho_{i} g_{i,j}^{N}(s)\bigg|^2\, ds  &\leq 4 \left(  \sum_{i=1}^{N}  i^{\frac{1}{2}}  g_{i,j}^{N}(\sigma)\right ) H^{-\frac{1}{2}}\ \text{for} \  j = 1,2,  \label{half moment bound}
    \end{align}
where $g_{i,1}^{N} = a_{i}^{N}$, $g_{i,2}^{N} = b_{i}^{N}$, $g_{i,1}^{0} = a_{i}^{0}$, and   $g_{i,2}^{0} = b_{i}^{0}$ for each $i \in \{1,\ldots, 2N\}$.
\end{lemma}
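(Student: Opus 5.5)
The plan is to specialize the weights $(h_i)_{1\le i\le 2N}$ in \Cref{moment estimate lemma}, once for each of the four estimates. In every case the aim is to make the coagulation factor $h_{i+j}-h_i-h_j$ non-positive, or exactly computable, on $1\le i,j\le N$. The annihilation term always carries a minus sign, so it can be dropped or moved to the left-hand side. I write everything for $g_{i,1}^N=a_i^N$; the case $g_{i,2}^N=b_i^N$ is identical using \eqref{moment estimate lemma for B}. Non-negativity of $a^N$ and $b^N$ from \Cref{Locexistenceforapproximatesystem} is used throughout.

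\textbf{First moment bound \eqref{first moment bound}.} Take $h_i=i$ for $i\le N$ and $h_i=0$ for $N<i\le 2N$. For $i,j\le N$ the factor $h_{i+j}-h_i-h_j$ vanishes if $i+j\le N$ and equals $-(i+j)<0$ otherwise. Both integrals in \eqref{moment estimate lemma for A} are therefore non-positive, so $\sum_{i\le N} i a_i^N(t)\le \sum_{i\le N} i a_i^N(\sigma)$. Choosing $\sigma=0$ gives the second inequality.

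\textbf{Zeroth moment bound \eqref{zeroth moment bound}.} Take $h_i=1$ for all $i\le 2N$, so that $h_{i+j}-h_i-h_j=-1$. By \eqref{eqn 2.3}, $K_{i,j}\ge\rho_i\rho_j$, hence
\[
\sum_{i,j\le N}K_{i,j}a_i^Na_j^N\ \ge\ \Big(\sum_{i\le N}\rho_i a_i^N\Big)^2 .
\]
Dropping the non-positive annihilation term and taking $\sigma=0$ yields the estimate.

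\textbf{Annihilation bounds \eqref{double sum first moment for A} and \eqref{double sum first moment for B}.} Use $h_i=1$ when $\mu=0$, and the truncated weight $h_i=i\mathbf 1_{i\le N}$ when $\mu=1$. In both cases the coagulation term is non-positive, as shown above. Moving the annihilation integral to the left-hand side then gives
\[
\int_\sigma^t\sum_{i,j\le N} i^\mu J_{i,j}a_i^Nb_j^N\,ds\ \le\ \sum_i h_ia_i^N(\sigma)-\sum_i h_ia_i^N(t)\ \le\ \sum_{i\le 2N} i^\mu a_i^N(\sigma),
\]
where the last step uses non-negativity. To compare with the initial data I need that $\sum_{i\le 2N} i^\mu a_i^N$ is non-increasing in time.
\begin{itemize}
\item For $\mu=0$ this is the previous step.
\item For $\mu=1$ take $h_i=i$ on all of $\{1,\dots,2N\}$. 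For $i,j\le N$ we have $i+j\le 2N$, so $h_{i+j}-h_i-h_j=0$, and only the non-positive annihilation term remains.
\end{itemize}
The $b$-estimate \eqref{double sum first moment for B} follows in the same way.

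\textbf{Half-moment bound \eqref{half moment bound}.} This is the main computational step. Take $h_i=i^{1/2}$ for $i\le 2N$. The key identity is
\[
i^{1/2}+j^{1/2}-(i+j)^{1/2}=\frac{2(ij)^{1/2}}{i^{1/2}+j^{1/2}+(i+j)^{1/2}} .
\]
Since the denominator is at most $(2+\sqrt2)\max\{i,j\}^{1/2}$, this is at least $(2-\sqrt2)\min\{i,j\}^{1/2}$, which is at least $(2-\sqrt2)H^{1/2}$ whenever $i,j\ge H$.

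I then restrict the double sum to $H\le i,j\le N$, which is legitimate because all terms have the same sign, and use $K_{i,j}\ge\rho_i\rho_j$ together with the sign of the annihilation term. This gives
\[
\tfrac12(2-\sqrt2)H^{1/2}\int_\sigma^t\Big|\sum_{i=H}^N\rho_ia_i^N\Big|^2ds\ \le\ \sum_{i\le 2N} i^{1/2}a_i^N(\sigma).
\]
The resulting constant is $2/(2-\sqrt2)=2+\sqrt2\approx 3.41\le 4$. The only delicate points are this elementary inequality and keeping the summation range of the right-hand side consistent with the statement. To obtain the sum up to $N$ as written in \eqref{half moment bound}, I would rerun the argument with the truncated weight $h_i=i^{1/2}\mathbf 1_{i\le N}$. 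For $i+j>N$ this gives $h_{i+j}-h_i-h_j=-(i^{1/2}+j^{1/2})$, which is even more negative, so the same lower bound holds.
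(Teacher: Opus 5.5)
Your proposal is correct and follows the paper's proof: the same weight choices in \Cref{moment estimate lemma} ($h_i=i\mathbf{1}_{i\le N}$, $h_i=1$, and $h_i=i^{1/2}\mathbf{1}_{i\le N}$), combined with $K_{i,j}\ge\rho_i\rho_j$, non-negativity, and a lower bound of the form $c\,\min\{i,j\}^{1/2}$ for $h_i+h_j-h_{i+j}$. Two small refinements over the paper: your exact identity gives the constant $2-\sqrt2$ in place of the paper's $\tfrac12$, and your untruncated weight $h_i=i$ on $\{1,\dots,2N\}$ justifies the middle inequality $\sum_{i\le 2N} i\,a_i^N(\sigma)\le\sum_{i\le 2N} i\,a_i^0$ in \eqref{double sum first moment for A}, which the truncated weight alone does not give.
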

	\begin{proof}
The proof of \eqref{first moment bound}, together with those of \eqref{double sum first moment for A} and \eqref{double sum first moment for B} for $\mu=1$, follows by setting 
\begin{align*}
        h_{i} =\begin{cases}
             i, &\quad \text{if}~ i \in \{1,\ldots,N\},\\
             0, & \quad\text{if}~ i \in \{N+1,\ldots, 2N\}
        \end{cases}
        \end{align*}
in \eqref{moment estimate lemma for A} and \eqref{moment estimate lemma for B}, along with the non-negativity of $\left((a^{N}_{i})_{1\leq i \leq 2N}, (b^{N}_{i})_{1 \leq i \leq 2N}\right)$. Next, by taking $h_{i} = 1$ for each $i \in \{1,\ldots,2N\}$ and using \eqref{eqn 2.3} in \eqref{moment estimate lemma for A} and \eqref{moment estimate lemma for B}, we deduce \eqref{zeroth moment bound}, together with \eqref{double sum first moment for A} and \eqref{double sum first moment for B} for $\mu=0$.
Now, in order to prove \eqref{half moment bound}, we consider
 \begin{align*}
        h_{i} =\begin{cases}
             i^{1/2}, &\quad \text{if}~ i \in \{1,\ldots,N\},\\
             0, & \quad\text{if}~ i \in \{N+1,\ldots, 2N\}
        \end{cases}
        \end{align*}
in \Cref{moment estimate lemma}. Observe that for all \(1 \leq i,j \leq N\),  \begin{align}\label{inequality1}
\frac{1}{2}\min\{i,j\}^{1/2} \leq i^{1/2}+j^{1/2}-(i+j)^{1/2} \leq h_{i}+h_{j}-h_{i+j}.
\end{align} 
For \(t \in [0, T_{c})\) and \(\sigma \in [0, t]\), applying \eqref{inequality1} in \eqref{moment estimate lemma for A} and \eqref{moment estimate lemma for B}, together with \eqref{eqn 2.3}, yields
\[
\int_{\sigma}^{t} \sum_{i=1}^{N}\sum_{j=1}^{N} 
\min\{i,j\}^{1/2}\, \rho_i \rho_j \, a_i^N(s)\, a_j^N(s)\, ds 
\leq 4 \sum_{i=1}^{N} i^{1/2}\, a_i^N(\sigma).
\]
We obtain, for any \(H \in \{1,\ldots,N\}\),  
\[
H^{1/2} \int_{\sigma}^{t} \sum_{i=H}^{N}\sum_{j=H}^{N} 
\rho_i \rho_j\, a_i^N(s)\, a_j^N(s)\, ds \leq
\int_{\sigma}^{t} \sum_{i=1}^{N}\sum_{j=1}^{N} 
\min\{i,j\}^{1/2}\, \rho_i \rho_j\, a_i^N(s)\, a_j^N(s)\, ds
\leq 4 \sum_{i=1}^{N} i^{1/2}\, a_i^N(\sigma).
\]  
Therefore, \eqref{half moment bound} holds for $j=1$. Similarly, we can deduce \eqref{half moment bound} for $j=2.$
\end{proof}
\begin{remark}
By applying \eqref{zeroth moment bound}, we obtain the uniform estimates:
\[
0 \leq a_{i}^{N}(t) \leq \|a^{0}\|_{0}, \quad  
0 \leq b_{i}^{N}(t) \leq \|b^{0}\|_{0},
\]
for all \(i \in \{1, \ldots, 2N\}\) and \(t \in [0, T_{c})\). These bounds exclude the possibility of finite-time blow-up of the solution $\left((a^{N}_{i})_{1\leq i \leq 2N}, (b^{N}_{i})_{1 \leq i \leq 2N}\right)$ and, by the maximal interval of existence argument~\cite[Corollary~2.16]{teschl2012ordinary}, yield~\(T_{c} = \infty\).
\end{remark} 
The next lemma provides a bound on the first moment of $g_{i,j}^{N}(t)$
in terms of the initial zeroth moment and the time variable $t$.
\begin{lemma}Suppose that \eqref{eqn 2.3} and \eqref{eqn 2.4} hold. Then, for $t>0$ and $j=1,2,$
 \begin{align}
\quad \sum_{i=1}^{N} i  g_{i,j}^{N}(t) \leq \frac{2}{S}\left(\sum_{i=1}^{2N} g_{i,j}^{0}\right)^{1/2}t^{-\frac{1}{2}},\label{S inequality}
\end{align}where $g_{i,1}^{N} = a_{i}^{N}$, $g_{i,2}^{N} = b_{i}^{N}$, $g_{i,1}^{0} = a_{i}^{0}$, and   $g_{i,2}^{0} = b_{i}^{0}$ for each $i \in \{1,\ldots, 2N\}$, and $S$ is given by \eqref{eqn 2.4}.
\begin{proof}
Combining \eqref{eqn 2.4} with \eqref{zeroth moment bound} for $j=1$ gives, for every $t>0$,
\begin{equation}\label{S^2 inequality}
\int_{0}^{t} \left| \sum_{i=1}^{N} i a_i^N(s) \right|^{2} ds 
\leq \frac{2}{S^{2}} \sum_{i=1}^{2N} a_i^{0}.
\end{equation}
Since the function
$$ s \mapsto\sum_{i=1}^{N} i a_i^N(s) $$
is non-increasing by \eqref{first moment bound}, \eqref{S^2 inequality} implies that 
$$
t \left| \sum_{i=1}^{N} i a_i^N(t) \right|^{2} \leq \frac{2}{S^{2}} \sum_{i=1}^{2N} a_i^{0},
$$
which yields \eqref{S inequality} for $j=1$. Similarly, we can deduce \eqref{S inequality} for $j=2$.
\end{proof}
\end{lemma}
		\section{Existence}\label{Section Existence results}
 \noindent Prior to proving \Cref{finite mass theorem} under the assumptions \eqref{eqn 2.3} and \eqref{eqn 2.4} on the coagulation rates, we introduce some notation. In view of \eqref{truncated system}, for each $i \in \{1, \dots, N\}$ and $t \in (0, \infty)$, we have
	\begin{align}\label{decomposition}
		\frac{d a_{i}^{N}}{dt} &= Q_{i}(a^N) + P_{i}(a^N,b^N), \quad 
		\frac{d b_{i}^{N}}{dt} = Q_{i}(b^N)+ R_{i}(a^N,b^N),
	\end{align}
	where
	\begin{align}
		Q_{i}(a^N) &= \frac{1}{2} \sum_{j=1}^{i-1} K^{N}_{j,i-j}\, a_{j}^{N} a_{i-j}^{N} 
		- \sum_{j=1}^{N} K^{N}_{i,j}\, a_{i}^{N} a_{j}^{N}, \quad	P_{i}(a^N,b^N) = - \sum_{j=1}^{N} J_{i,j}^{N}\, a_{i}^{N} b_{j}^{N}, \\
		 \ Q_{i}(b^N) &= \frac{1}{2} \sum_{j=1}^{i-1} K^{N}_{j,i-j}\, b_{j}^{N} b_{i-j}^{N} 
		- \sum_{j=1}^{N} K^{N}_{i,j}\, b_{i}^{N} b_{j}^{N}, \quad R_{i}(a^N,b^N) = - \sum_{j=1}^{N} J^{N}_{i,j}\, b_{i}^{N} a_{j}^{N},\\
        \text{and}& \qquad a^{N} = (a_{1}^{N}, a_{2}^{N},\ldots,a^{N}_{N}),\quad b^{N} = (b_{1}^{N}, b_{2}^{N},\ldots,b_{N}^{N}). 
	\end{align}
   The following lemma establishes the boundedness of $\bigg(\frac{da_{i}^{N}}{dt}\bigg)_{N\ge i}$and $\bigg(\frac{db_{i}^{N}}{dt}\bigg)_{N\ge i}$  in $L^{1}(0,T)$ for every $T>0$.
    \begin{lemma}\label{conc3 bounds*}
Suppose that \eqref{symmetry conditions}, \eqref{eqn 2.3}, and \eqref{eqn 2.4} hold, and that $a^{0}, b^{0}\in X_{1}^{+}$. Then, for all $i\geq1,$ $ N\geq i,$ and $T>0$, the following estimates hold:
\begin{align}
	a_{i}^{N}(t) \leq &\| a^{0} \|_{0} \quad\text{and} \quad b_{i}^{N}(t)  \leq \| b^{0} \|_{0}, \ 	\text {for all} \  t \in [0,T] \label{conc2 bounds*},\\
    \left\| \frac{da_{i}^{N}}{dt} \right\|_{L^{1}(0,T)} \leq & \ T^{1/2} \  \Bigg\{ T^{1/2} \left( \sum_{j=1}^{i-1} K_{j,i-j} \right) \|a^{0}\|_{0}^2 + 2 (1+\kappa) \rho_{i} \|a^{0}\|_{0}^{3/2}\Bigg\} + \|a^{0}\|_{0},\\
  \text{and} ~~  \left\| \frac{db_{i}^{N}}{dt} \right\|_{L^{1}(0,T)} \leq & \ T^{1/2} \  \Bigg\{ T^{1/2} \left( \sum_{j=1}^{i-1} K_{j,i-j} \right) \|b^{0}\|_{0}^2 + 2 (1+\kappa) \rho_{i} \|b^{0}\|_{0}^{3/2}\Bigg\} + \|b^{0}\|_{0}.
    \end{align}
    \end{lemma}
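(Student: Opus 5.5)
The pointwise bounds \eqref{conc2 bounds*} follow directly from \eqref{zeroth moment bound}. It gives $\sum_{i=1}^{2N} a_i^N(t) \le \sum_{i=1}^{2N} a_i^0 \le \|a^0\|_0$, and since all terms are non-negative by \Cref{Locexistenceforapproximatesystem}, each $a_i^N(t)\le \|a^0\|_0$. The same argument gives the bound for $b_i^N$. For the derivative estimates, fix $i\ge 1$, $N\ge i$, $T>0$, and use the decomposition \eqref{decomposition}. We then have $\|\tfrac{da_i^N}{dt}\|_{L^1(0,T)} \le \int_0^T |Q_i(a^N)|\,ds + \int_0^T |P_i(a^N,b^N)|\,ds$, and we bound the three pieces (gain, coagulation loss, annihilation loss) separately.

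\emph{Gain term.} By \eqref{conc2 bounds*}, $\tfrac12\sum_{j=1}^{i-1}K_{j,i-j}a_j^N a_{i-j}^N \le \left(\sum_{j=1}^{i-1}K_{j,i-j}\right)\|a^0\|_0^2$. Integrating over $(0,T)$ gives $T\left(\sum_{j=1}^{i-1}K_{j,i-j}\right)\|a^0\|_0^2$, which is exactly the $T^{1/2}\cdot T^{1/2}$ contribution.

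\emph{Coagulation loss term.} By \eqref{eqn 2.3} and \eqref{eqn 2.4}, $K_{i,j}\le (1+\kappa)\rho_i\rho_j$. Hence
\[
\sum_{j=1}^N K_{i,j}a_i^Na_j^N \le (1+\kappa)\rho_i\,\|a^0\|_0 \sum_{j=1}^N \rho_j a_j^N .
\]
Applying Cauchy--Schwarz in time and then \eqref{zeroth moment bound} gives
\[
\int_0^T \sum_{j=1}^N \rho_j a_j^N\,ds \le T^{1/2}\Big(\int_0^T\Big|\sum_{j=1}^N\rho_ja_j^N\Big|^2ds\Big)^{1/2}\le T^{1/2}\big(2\|a^0\|_0\big)^{1/2}.
\]
This yields a bound $\sqrt2(1+\kappa)\rho_i T^{1/2}\|a^0\|_0^{3/2}$, which is at most the stated $2(1+\kappa)\rho_iT^{1/2}\|a^0\|_0^{3/2}$.

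\emph{Annihilation term.} This is the step where no growth assumption on $J$ is available, so a pointwise-in-time bound would fail. Instead we use the integrated dissipation estimate \eqref{double sum first moment for A} with $\mu=0$ and $\sigma=0$. Since every summand is non-negative,
\[
\int_0^T \sum_{j=1}^N J_{i,j}a_i^N b_j^N\,ds \le \int_0^T\sum_{k=1}^N\sum_{j=1}^N J_{k,j}a_k^Nb_j^N\,ds \le \sum_{k=1}^{2N}a_k^0\le \|a^0\|_0 .
\]
This produces the final additive $\|a^0\|_0$ term. Summing the three contributions gives the stated estimate for $a_i^N$.

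The estimate for $b_i^N$ follows verbatim, using the $j=2$ cases of \eqref{zeroth moment bound} and \eqref{double sum first moment for B} with $\mu=0$. The only real care needed is the annihilation term, where we rely on the time-integrated bound rather than a pointwise one. Everything else is a direct combination of non-negativity, \eqref{conc2 bounds*}, Cauchy--Schwarz, and \Cref{lemma 3.3}.
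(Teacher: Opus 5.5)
Your proof is correct and follows the paper's argument. You use \eqref{zeroth moment bound} for the pointwise bounds, $K_{i,j}\le(1+\kappa)\rho_i\rho_j$ together with the time-integrated dissipation in \eqref{zeroth moment bound} for the coagulation loss, and \eqref{double sum first moment for A} with $\mu=0$ for the annihilation term. The only cosmetic difference is that the paper first bounds $\|Q_i(a^N)\|_{L^2(0,T)}$ and then passes to $L^1$ by H\"older, whereas you apply Cauchy--Schwarz directly to the loss term, which even yields the slightly better constant $\sqrt{2}$ in place of $2$.
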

    \begin{proof}
   \eqref{conc2 bounds*} follows immediately from \eqref{zeroth moment bound}. Moreover, from \eqref{eqn 2.3}, \eqref{eqn 2.4}, and \eqref{zeroth moment bound}, we infer that
\begin{align}\label{bound for L^2 norm}
			\|Q_{i}(a^N)\|_{L^{2}(0,T)}	 \leq & \ T^{1/2} \left( \sum_{j=1}^{i-1} K_{j,i-j} \right) \|a^{0}\|_{0}^2 + 2 (1+\kappa)\rho_{i} \|a^{0}\|_{0}^{3/2}.
            \end{align}
 Furthermore, let $(\Omega,\mathcal{F},\nu)$ be a finite measure space and let $0 < r < s < \infty$. Then, for every  $g \in L^{s}(\nu)$, we have
\begin{align}\label{inequality}
\|g\|_{L^{r}} \le \nu(\Omega)^{(s - r)/ (rs)} \|g\|_{L^{s}},
\end{align}
 see \cite[Section~7.10, pp.~197]{axler2020measure}.
Now, choosing $r = 1$, $s = 2$, $g = Q_{i}(a^N)$, and $X = (0,T)$ in \eqref{inequality}, and invoking \eqref{bound for L^2 norm}, we obtain
\begin{align}
			\|Q_{i}(a^{N})\|_{L^{1}(0,T)}	 \leq & \ T^{1/2} \  \Bigg\{ T^{1/2} \left( \sum_{j=1}^{i-1} K_{j,i-j} \right) \|a^{0}\|_{0}^2 + 2 (1+\kappa) \rho_{i} \|a^{0}\|_{0}^{3/2}\Bigg\}.\label{derivative bound for Q_{i}(a)*}
            \end{align}
           By an analogous argument, we get
            \begin{align}
			\|Q_{i}(b^{N})\|_{L^{1}(0,T)}	 \leq&  \ T^{1/2} \  \Bigg\{ T^{1/2} \left( \sum_{j=1}^{i-1} K_{j,i-j} \right) \|b^{0}\|_{0}^2 + 2 (1+\kappa) \rho_{i} \|b^{0}\|_{0}^{3/2}\Bigg\}.\label{derivative bound for Q_{i}(b)*}
		\end{align}
 Moreover, in view of \eqref{double sum first moment for A} and \eqref{double sum first moment for B} for $\mu=0$, we infer that 
		\begin{align}\label{derivative bound for e*}
		\| P_{i}(a^{N},b^{N})\|_{L^1(0,T)} \leq  \|a^{0}\|_{0}\quad \text{and} \quad \| R_{i}(a^{N},b^{N})\|_{L^1(0,T)} \leq  \|b^{0}\|_{0}.
		\end{align}
Combining \eqref{derivative bound for Q_{i}(a)*}, \eqref{derivative bound for Q_{i}(b)*}, and \eqref{derivative bound for e*}, together with \eqref{decomposition}, we conclude that 
\begin{align*}
		\left\|\frac{da_{i}^{N}}{dt} \right\|_{L^{1}(0,T)} \leq \ T^{1/2} \  \Bigg\{ T^{1/2} \left( \sum_{j=1}^{i-1} K_{j,i-j} \right) \|a^{0}\|_{0}^2 + 2 (1+\kappa) \rho_{i}\|a^{0}\|_{0}^{3/2}\Bigg\} + \|a^{0}\|_{0}\\
        \text{and}\;\;
\left\|\frac{db_{i}^{N}}{dt} \right\|_{L^{1}(0,T)} \leq \ T^{1/2} \  \Bigg\{ T^{1/2} \left( \sum_{j=1}^{i-1} K_{j,i-j} \right) \|b^{0}\|_{0}^2 + 2 (1+\kappa) \rho_{i}\|b^{0}\|_{0}^{3/2}\Bigg\} + \|b^{0}\|_{0}.
\end{align*}
\end{proof}
Now, by \Cref{conc3 bounds*}, it follows that the sequence $(a^{N}_{i})_{N \geq i}$ is uniformly bounded in $W^{1,1}(0,T)$. 
Hence, by Helly's theorem \cite[pp.~372--374]{kolmogorov1975introductory}
and a diagonal argument, we can extract a subsequence
$(a_i^{N_k})_{N_k\ge i}$ of $(a_i^N)_{N\ge i}$ and obtain a
non-negative sequence $a=(a_i)_{i\ge1}$ for which
\begin{align}\label{limit of seq a*}
    a_i^{N_k}(t) \to a_i(t)
    \qquad \text{as } k\to\infty,
\end{align}
for each $i\ge1$ and $t\ge0$.
Similarly, since $(b_{i}^{N_{k}})_{N_{k}\ge i}$ is uniformly bounded in $W^{1,1}(0,T)$, we can extract a further subsequence $(b_{i}^{N_{k_{p}}})_{N_{k_{p}}\ge i}$ of  $(b_{i}^{N_{k}})_{N_{k}\ge i}$ and obtain a non-negative sequence $b = (b_{i})_{i \geq 1} $ for which
\begin{align}\label{limit of relabeled seq b*}
	b_{i}^{N_{k_{p}}}(t) \to b_{i}(t)~~ \text{as}~~p\to \infty~~\text{for each}~i \geq 1 ~\text{and}~t\ge0.
    \end{align}
Moreover, along the same subsequence, we also have
\begin{align}\label{limit of relabeled seq a*}
	a_{i}^{N_{k_{p}}}(t) \to a_{i}(t)~~ \text{as}~~p\to \infty~~\text{for each}~i \geq 1 ~\text{and}~t\ge0.
\end{align}
For $t\in[0,\infty),~ l\ge 1,$ and $N_{k_{p}}\ge l,$ the estimate in \eqref{first moment bound} gives
\[
\sum_{i=1}^{l} i\, a_i^{N_{k_p}}(t) \le \|a^0\|_{1},
\qquad
\sum_{i=1}^{l} i\, b_i^{N_{k_p}}(t) \le \|b^0\|_{1}.
\]
Using \eqref{limit of relabeled seq a*} and \eqref{limit of relabeled seq b*}, we let $p \to \infty$ in the above inequalities, yielding  
\[
\sum_{i=1}^{l} i\, a_{i}(t) \leq \|a^{0}\|_{1}, 
\qquad 
\sum_{i=1}^{l} i\, b_{i}(t) \leq \|b^{0}\|_{1}, 
\quad t \in [0, \infty).
\]
Since these estimates hold for $l \geq 1$, we obtain  
\begin{align}\label{solfirstmomentestimates}
\sum_{i=1}^{\infty} i\, a_{i}(t) \leq \|a^{0}\|_{1}, 
\qquad 
\sum_{i=1}^{\infty} i\, b_{i}(t) \leq \|b^{0}\|_{1}, 
\quad t \in [0, \infty).
\end{align}  
Hence,  
\[
c(t) = \left((a_{i}(t))_{i \geq 1}, (b_{i}(t))_{i \geq 1}\right) \in X_{1}^{+} \times X_{1}^{+}, 
\quad t \in [0, \infty).
\]
Applying \eqref{eqn 2.3}, \eqref{eqn 2.4}, \eqref{first moment bound}, \eqref{double sum first moment for A}--\eqref{half moment bound}, \eqref{limit of relabeled seq b*}, \eqref{limit of relabeled seq a*}, and Fatou's lemma, we obtain, for each $i\ge 1$ and $T>0$,
\begin{align}\label{L1L2integrability}
\sum_{j=1}^{\infty} K_{i,j}\,a_j,\;
\sum_{j=1}^{\infty} K_{i,j}\,b_j\in L^2(0,T)
\,\text{and}\,
\sum_{j=1}^{\infty} J_{i,j}\,a_i b_j,\;
\sum_{j=1}^{\infty} J_{i,j}\,b_i a_j\in L^1(0,T).
\end{align}
Next, we claim that $c=\left((a_{i})_{i \geq 1},(b_{i})_{i\geq 1}\right)$ satisfies \eqref{eqn 2.1}--\eqref{eqn 2.2}. For that purpose, we first state the lemma given below, which will play a key role in the convergence analysis; see \cite[Theorem 2.2]{laurenccot1999global}.
\begin{lemma}\label{mainconvolemmaforthmi}
Assume that the conditions \eqref{symmetry conditions}, \eqref{eqn 2.3}, and \eqref{eqn 2.4} are satisfied, and that $a^{0},b^{0} \in X_{1}^{+}$. Then, for all $1\leq i \leq N_{k_p}$ and every $T>0 $, we have 
\begin{align}\label{tail of C series for A}
	\lim_{p \to \infty} \left\| \sum_{j=1}^{N_{k_p}} K_{i,j}\,a_{j}^{N_{k_p}} - \sum_{j=1}^{\infty} K_{i,j}\,a_{j} \right\|_{L^{2}(0,T)} &= 0, \\
	\lim_{p \to \infty} \left\| \sum_{j=1}^{N_{k_p}} K_{i,j}\,b_{j}^{N_{k_p}} - \sum_{j=1}^{\infty} K_{i,j}\,b_{j} \right\|_{L^{2}(0,T)} &= 0, \label{tail of C series for B}\\
		\lim_{p \to \infty}\bigg\|\sum_{j=1}^{N_{k_{p}}} J_{i,j} a_{i}^{N_{k_{p}}} b_{j}^{N_{k_{p}}} - \sum_{j=1}^{\infty} J_{i,j} a_{i} b_{j}\bigg\|_{L^1(0,T)}  &= 0, \label{convoannihilationA}\\
        \text{and}\;\;
		\lim_{p \to \infty} \bigg\|\sum_{j=1}^{N_{k_{p}}} J_{i,j} b_{i}^{N_{k_{p}}} a_{j}^{N_{k_{p}}} -\sum_{j=1}^{\infty} J_{i,j} b_{i} a_{j}\bigg\|_{L^1(0,T)}  &= 0.  \label{convoannihilationB}
\end{align}
\end{lemma}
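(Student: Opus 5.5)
The plan is the classical split of the series into a finite part, where pointwise convergence applies, and a tail, which is small uniformly in $p$. Fix $i\ge 1$, $T>0$, and a cut-off $H\ge i$ with $H\le N_{k_p}$. For \eqref{tail of C series for A} I would write
\begin{align*}
\sum_{j=1}^{N_{k_p}} K_{i,j}a_j^{N_{k_p}}-\sum_{j=1}^{\infty}K_{i,j}a_j
=\sum_{j=1}^{H-1}K_{i,j}\bigl(a_j^{N_{k_p}}-a_j\bigr)+\sum_{j=H}^{N_{k_p}}K_{i,j}a_j^{N_{k_p}}-\sum_{j=H}^{\infty}K_{i,j}a_j .
\end{align*}
The first (finite) sum tends to $0$ in $L^2(0,T)$ by dominated convergence. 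Indeed, we have pointwise convergence from \eqref{limit of relabeled seq a*}, and the uniform bound $a_j^{N_{k_p}}\le\|a^0\|_0$ from \eqref{conc2 bounds*} gives a domination on $(0,T)$.

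For the truncated tail I would use \eqref{eqn 2.3}--\eqref{eqn 2.4}, which give $K_{i,j}\le(1+\kappa)\rho_i\rho_j$. Hence
\[
\Bigl\|\sum_{j=H}^{N_{k_p}}K_{i,j}a_j^{N_{k_p}}\Bigr\|_{L^2(0,T)}^2\le(1+\kappa)^2\rho_i^2\int_0^T\Bigl|\sum_{j=H}^{N_{k_p}}\rho_ja_j^{N_{k_p}}\Bigr|^2ds .
\]
By \eqref{half moment bound} with $\sigma=0$, this is at most $4(1+\kappa)^2\rho_i^2\|a^0\|_{1/2}H^{-1/2}$. Since $\|a^0\|_{1/2}\le\|a^0\|_1<\infty$, the bound is uniform in $p$ and vanishes as $H\to\infty$.

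The same bound is needed for the limit tail $\sum_{j\ge H}K_{i,j}a_j$. Here I would apply Fatou's lemma twice. First, for each finite $M>H$, pointwise convergence gives $\sum_{j=H}^{M}\rho_ja_j=\lim_p\sum_{j=H}^M\rho_ja_j^{N_{k_p}}$. Second, Fatou in time gives $\int_0^T|\sum_{j=H}^{M}\rho_ja_j|^2\le\liminf_p\int_0^T|\sum_{j=H}^{N_{k_p}}\rho_ja_j^{N_{k_p}}|^2\le 4\|a^0\|_{1/2}H^{-1/2}$. Letting $M\to\infty$ by monotone convergence then yields the same estimate for the infinite tail. Collecting the three pieces, taking $\limsup_p$ and then $H\to\infty$ proves \eqref{tail of C series for A}; \eqref{tail of C series for B} is identical.

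For the annihilation terms \eqref{convoannihilationA}--\eqref{convoannihilationB} no growth bound on $J$ is available, so the tail must instead be controlled by the first moment. I would use \eqref{double sum first moment for B} with $\mu=1$ and the symmetry $J_{i,j}=J_{j,i}$, which give
\[
\int_0^T\sum_{j=H}^{N}\sum_{l=1}^N J_{j,l}\,b_j^{N}a_l^{N}\,ds\le\frac1H\int_0^T\sum_{j,l} j\,J_{j,l}b_j^Na_l^N\,ds\le\frac{\|b^0\|_1}{H}.
\]
Keeping only the term $l=i$, this gives $\int_0^T\sum_{j\ge H}J_{i,j}a_i^{N}b_j^{N}\,ds\le\|b^0\|_1/H$ uniformly in $N$. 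The same Fatou argument transfers this bound to the limit $(a,b)$. The finite part $\sum_{j<H}J_{i,j}a_i^{N_{k_p}}b_j^{N_{k_p}}$ converges in $L^1(0,T)$ by bounded pointwise convergence. Combining these three pieces gives the result.

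I expect the main obstacle to be this step. The tail control must come from the mass bound, and this is exactly where $a^0,b^0\in X_1^+$ is essential. One must also check carefully that the truncation convention $J^N_{i,j}=0$ for $\max\{i,j\}>N$ matches the index ranges, which requires $i\le H\le N_{k_p}$.
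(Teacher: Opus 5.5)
Your proposal is correct and follows essentially the same route as the paper. Both split the series at a cut-off, apply dominated convergence to the finite part, and bound the coagulation tails with \eqref{half moment bound}. The truncated annihilation tail is controlled in both by \eqref{double sum first moment for B} together with the symmetry of $J$.

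The only minor difference is the limit annihilation tail. The paper handles it through the integrability \eqref{L1L2integrability} and dominated convergence in the cut-off. You instead transfer the explicit $\|b^0\|_1/H$ bound via Fatou, which is equally valid; the integrability itself is obtained by Fatou in the paper anyway.
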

\begin{proof}
All the terms in~\eqref{tail of C series for A}--\eqref{convoannihilationB} are well defined due to \eqref{eqn 2.3}, \eqref{eqn 2.4}, \eqref{double sum first moment for A}, \eqref{double sum first moment for B}, \eqref{first moment bound}, \eqref{half moment bound}, and \eqref{L1L2integrability}. The convergence in \eqref{tail of C series for A} and \eqref{tail of C series for B} follow from \eqref{eqn 2.3}, \eqref{eqn 2.4}, \eqref{first moment bound}, \eqref{zeroth moment bound}, \eqref{half moment bound}, \eqref{limit of relabeled seq a*}, \eqref{limit of relabeled seq b*}, and Lebesgue's dominated convergence theorem (DCT); see \cite[Section~4]{laurenccot1999global} and \cite[Section~2.2]{laurenccot2024redner} for similar detailed proofs.
To establish \eqref{convoannihilationA}, it remains only to show that  
\begin{align}\label{tail of A series for A}
	\limsup_{p \to \infty} \bigg\| \sum_{j=1}^{N_{k_p}} J_{i,j}\,a_{i}^{N_{k_p}}(s)\,b_{j}^{N_{k_p}}(s) 
	- \sum_{j=1}^{\infty} J_{i,j}\,a_{i}(s)\,b_{j}(s) \bigg\|_{L^1(0,T)} = 0
\end{align}
for every $i \ge 1$. Fix a positive integer $l$ such that $2 \leq l < N_{k_{p}}< \infty$, and let $T\in (0, \infty)$. Then, using the triangle inequality and the non-negativity of the solutions $\left((a_{i}^{N_{k_{p}}})_{1\leq i\leq N_{k_{p}}}, (b_{i}^{N_{k_{p}}})_{1\leq i\leq N_{k_{p}}}\right)$ and $\left((a_{i})_{i\ge 1}, (b_{i})_{i\ge 1}\right)$, we deduce that
		\begin{align}
			& \int_{0}^{T} \left|\sum_{j=1}^{N_{k_{p}}}  J_{i,j}  a_{i}^{N_{k_{p}}} b_{j}^{N_{k_{p}}} - \sum_{j=1}^{\infty} J_{i,j} a_{i} b_{j}\right| ds  \nonumber \\
            &\leq  \  \ \int_{0}^{T} \sum_{j=1}^{l-1} J_{i,j} \left|a_{i}^{N_{k_{p}}} b_{j}^{N_{k_{p}}} - a_{i} b_{j}\right| ds  +  \int_{0}^{T}		 \sum_{j=l}^{N_{k_{p}}} J_{i,j} a_{i}^{N_{k_{p}}} b_{j}^{N_{k_{p}}} ds +   \int_{0}^{T} \sum_{j=l}^{\infty} J_{i,j} a_{i} b_{j} ds.\label{inequalities for annihilation integral}
	\end{align}
    We now analyze the second integral occurring on the right in \eqref{inequalities for annihilation integral}. Since $1 \leq i \leq N_{k_{p}}$, using the symmetry of the annihilation rates $J_{i,j}$, together with~\eqref{double sum first moment for B} for $\sigma = 0$, we obtain the following estimate:
		\begin{align*}
			\int_{0}^{T} \sum_{j=l}^{N_{k_{p}}} J_{i,j} a_{i}^{N_{k_{p}}}(s) b_{j}^{N_{k_{p}}}(s) ds \leq & \ \frac{1}{l}  \int_{0}^{T} \sum_{j=l}^{N_{k_{p}}} J_{i,j}  a_{i}^{N_{k_{p}}}(s)  j b_{j}^{N_{k_{p}}}(s) ds \\
			\leq & \frac{1}{l}   \int_{0}^{T} {a_{i}}^{N_{k_{p}}}(s) \sum_{j=l}^{N_{k_{p}}} jJ_{i,j} b_{j}^{N_{k_{p}}}(s) ds \\
		 	\leq & \frac{1}{l}  \int_{0}^{T} \sum_{j=1}^{N_{k_{p}}} {a_{j}}^{N_{k_{p}}}(s) \sum_{i=1}^{N_{k_{p}}} i J_{i,j} b_{i}^{N_{k_{p}}}(s) ds  \\
			\leq & \frac{1}{l}  \sum_{i=1}^{2 N_{k_{p}}} i b_{i}^{0} 
		 	\leq  \frac{1}{l} \|b^{0}\|_{1}.
		 \end{align*}
Applying the above estimate to~\eqref{inequalities for annihilation integral} gives
\begin{align}
	\int_{0}^{T} \left|\sum_{j=1}^{N_{k_{p}}} J_{i,j} a_{i}^{N_{k_{p}}} b_{j}^{N_{k_{p}}} 
	- \sum_{j=1}^{\infty} J_{i,j} a_{i} b_{j}\right| ds 
	&\leq \int_{0}^{T} \sum_{j=1}^{l-1} J_{i,j} 
	\left|a_{i}^{N_{k_{p}}} b_{j}^{N_{k_{p}}} - a_{i} b_{j}\right| ds 
	+ \frac{\|b^{0}\|_{1}}{l} \nonumber \\[6pt]
	&\quad + \int_{0}^{T} \sum_{j=l}^{\infty} J_{i,j} a_{i} b_{j} \, ds.
	\label{tempconvoliminequality}
\end{align}
Passing to the limit as $p \to \infty$ and observing that the leading term on the right of~\eqref{tempconvoliminequality} vanishes due to \eqref{zeroth moment bound}, \eqref{limit of relabeled seq a*}, \eqref{limit of relabeled seq b*}, \eqref{solfirstmomentestimates}, and DCT, we infer that
\begin{align}
	\lim_{p \to \infty} \int_{0}^{T} \left|\sum_{j=1}^{N_{k_{p}}} J_{i,j} a_{i}^{N_{k_{p}}} b_{j}^{N_{k_{p}}} 
	- \sum_{j=1}^{\infty} J_{i,j} a_{i} b_{j}\right| ds 
	&\leq \frac{\|b^{0}\|_{1}}{l} 
	+ \int_{0}^{T} \sum_{j=l}^{\infty} J_{i,j} a_{i} b_{j} \, ds.
	\label{tempconvoliminequality1}
\end{align}
Since $$\sum_{j=1}^{\infty} J_{i,j}\,a_{i} b_{j} \in L^{1}(0,T) $$\text{by} \eqref{L1L2integrability},
DCT yields,  
for each $T \in (0,\infty)$, \begin{align}\label{temptailconvoannihilation}
\lim_{l \to \infty} \int_{0}^{T} \sum_{j=l}^{\infty} J_{i,j} a_{i}(s) b_{j}(s) \, ds = 0.
\end{align}
Next, letting $l \to \infty$ in~\eqref{tempconvoliminequality1} and using~\eqref{temptailconvoannihilation}, we obtain~\eqref{convoannihilationA}. Similarly,~\eqref{convoannihilationB} holds, thereby completing the proof of~\Cref{mainconvolemmaforthmi}.
\end{proof}
With the preceding results, we can now prove \Cref{finite mass theorem} under the assumptions \eqref{eqn 2.3} and \eqref{eqn 2.4} on the coagulation rates.
\begin{proof}[Proof of \Cref{finite mass theorem}]
For $1 \leq i \leq N_{k_p}$ and $t\ge 0$, the truncated system~\eqref{truncated system} yields
\begin{align*}
    a_i^{N_{k_p}}(t) &= a_i^{0}+\int_{0}^{t} \left(\frac{1}{2} \sum_{j=1}^{i-1} K_{j,i-j}^{N_{k_p}}\,a_{j}^{N_{k_p}} a_{i-j}^{N_{k_p}}
			- \sum_{j=1}^{N_{k_p}} K_{i,j}^{N_{k_p}}\,a_{i}^{N_{k_p}} a_{j}^{N_{k_p}}
			- \sum_{j=1}^{N_{k_p}} J_{i,j}^{N_{k_p}}\,a_i^{N_{k_p}} b_{j}^{N_{k_p}}\right)\,ds,\\
    b_i^{N_{k_p}}(t) &= b_i^{0}+ \int_{0}^{t} \left(\frac{1}{2} \sum_{j=1}^{i-1} K_{j,i-j}^{N_{k_p}}\,b_{j}^{N_{k_p}} b_{i-j}^{N_{k_p}}
			- \sum_{j=1}^{N_{k_p}} K_{i,j}^{N_{k_p}}\,b_{i}^{N_{k_p}} b_{j}^{N_{k_p}}
			- \sum_{j=1}^{N_{k_p}} J_{i,j}^{N_{k_p}}\,b_i^{N_{k_p}} a_{j}^{N_{k_p}}\right)\,ds.
\end{align*}
Invoking \eqref{solfirstmomentestimates}, \eqref{limit of relabeled seq a*}, \eqref{limit of relabeled seq b*}, together with \eqref{tail of C series for A}--\eqref{convoannihilationB}, we let $p\to\infty$ in the equations above; it follows that $c=\left((a_i)_{i\ge 1},(b_i)_{i\ge 1}\right)$ satisfies \eqref{eqn 2.1}--\eqref{eqn 2.2} on $[0,\infty)$. Moreover, \eqref{eqn 2.1}--\eqref{eqn 2.2}, together  with \eqref{L1L2integrability}, imply that $a_{i},b_{i} \in C([0,\infty))$. Hence, \Cref{finite mass theorem} follows, provided that the coagulation rates satisfy \eqref{eqn 2.3} and \eqref{eqn 2.4}.
\end{proof}
 We next prove \Cref{finite mass theorem}, assuming \eqref{eqn 2.3} and \eqref{eqn 2.5} on the coagulation rates. Before doing so, we establish the following lemma, that provides bounds on $\bigg(\frac{da_{i}^{N}}{dt}\bigg)_{N\ge i}$and $\bigg(\frac{db_{i}^{N}}{dt}\bigg)_{N\ge i}$ in $L^{1}(0,T)$ for every $T>0$.
 \begin{lemma}\label{conc3 bounds**}
 Assume that \eqref{symmetry conditions}, \eqref{eqn 2.3}, and \eqref{eqn 2.5} hold, and that $a^{0},b^{0}\in X_{1}^{+}$. For all $i\geq1,$ $ N\geq i,$ and $T>0$, the following estimates hold:
      \begin{align}
	a_{i}^{N}(t) \leq \| a^{0} \|_{0} & \quad\text{and} \quad b_{i}^{N}(t)  \leq \| b^{0} \|_{0}, \ 	\text {for all} \  t \in [0,T] \label{conc4 bounds},\\
    &\left\| \frac{da_{i}^{N}}{dt} \right\|_{L^{1}(0,T)} \leq \gamma_{i}(T)\label{der bound for a}, \\
    & \left\|\frac{db_{i}^{N}}{dt} \right\|_{L^{1}(0,T)} \leq \delta_{i}(T)\label{der bound for b},\\
      \text{where}\;\;\;\gamma_{i}(T)&= \bigg( \frac{1}{2} \left(\sup_{1\leq j\leq i-1}K_{j,i-j}\right)\|a^{0}\|_{0}^{2}~T+ \sup_{j\geq1} \bigg(\frac{K_{i,j}}{j}\bigg)  \|a^{0}\|_{1}^{2}~T + \|a^{0}\|_{0}\bigg )\nonumber\\
    \text{and}\;\;\;\delta_{i}(T)&= \bigg(\frac{1}{2} \left(\sup_{1\leq j\leq i-1}K_{j,i-j}\right)\|b^{0}\|_{0}^{2}~T+ \sup_{j\geq1} \bigg(\frac{K_{i,j}}{j}\bigg) \|b^{0}\|_{1}^{2}~T + \|b^{0}\|_{0}\bigg).\nonumber
    \end{align}
     \end{lemma}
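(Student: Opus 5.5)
The bound \eqref{conc4 bounds} is immediate from \eqref{zeroth moment bound}, exactly as in \Cref{conc3 bounds*}. Indeed, non-negativity of the truncated solution gives $a_i^N(t)\le \sum_{i=1}^{2N} a_i^N(t)\le \sum_{i=1}^{2N} a_i^0\le \|a^0\|_0$, and the same holds for $b$. For the derivative bounds I will use the decomposition \eqref{decomposition}, $\frac{da_i^N}{dt}=Q_i(a^N)+P_i(a^N,b^N)$, and estimate the gain part, the coagulation loss part, and the annihilation part of the right-hand side separately in $L^1(0,T)$. The point is that under \eqref{eqn 2.5} we no longer have the $L^2$-in-time control of $\sum_j\rho_j a_j^N$ coming from \eqref{eqn 2.4}. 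We therefore replace it by a pointwise-in-time bound based on the first moment \eqref{first moment bound} together with the finiteness of $\sup_j K_{i,j}/j$.

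For the gain term, since $i\le N$ all indices satisfy $\max\{j,i-j\}\le N$, so $K^N_{j,i-j}=K_{j,i-j}$ and
\[
\frac12\sum_{j=1}^{i-1}K_{j,i-j}a_j^Na_{i-j}^N\le \frac12\Big(\sup_{1\le j\le i-1}K_{j,i-j}\Big)\Big(\sum_{j=1}^{i-1}a_j^N\Big)^2\le \frac12\Big(\sup_{1\le j\le i-1}K_{j,i-j}\Big)\|a^0\|_0^2,
\]
using \eqref{zeroth moment bound}. Integrating over $(0,T)$ gives the first contribution to $\gamma_i(T)$.

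For the coagulation loss term I write $K_{i,j}=\frac{K_{i,j}}{j}\,j$. The key preliminary step is to check that $M_i:=\sup_{j\ge1}K_{i,j}/j<\infty$. By \eqref{eqn 2.3}, $K_{i,j}/j=\rho_i\rho_j/j+\beta_{i,j}/j$, and both terms tend to $0$ as $j\to\infty$ by \eqref{eqn 2.5}. Hence the sequence is bounded, and its supremum is finite since each term is finite. This is the only place where \eqref{eqn 2.5} enters, and I expect it to be the main (though mild) obstacle. It also explains why $\|a^0\|_1$ appears in place of $\|a^0\|_0^{3/2}$. Then
\[
a_i^N\sum_{j=1}^NK_{i,j}a_j^N\le M_i\,a_i^N\sum_{j=1}^N j a_j^N\le M_i\,\Big(i a_i^N\Big)\|a^0\|_1\le M_i\|a^0\|_1^2,
\]
using $a_i^N\le i a_i^N\le\sum_j ja_j^N\le\|a^0\|_1$ by \eqref{first moment bound}. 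Integrating over $(0,T)$ yields the term $M_i\|a^0\|_1^2T$.

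Finally, the annihilation term is handled exactly as in \eqref{derivative bound for e*}. By \eqref{double sum first moment for A} with $\mu=0$ and $\sigma=0$, we get $\|P_i(a^N,b^N)\|_{L^1(0,T)}\le\int_0^T\sum_{i,j}J_{i,j}a_i^Nb_j^N\,ds\le\|a^0\|_0$, and no growth condition on $J$ is needed. Summing the three contributions through the triangle inequality gives \eqref{der bound for a}. The estimate \eqref{der bound for b} follows verbatim with the roles of $a$ and $b$ exchanged, using \eqref{double sum first moment for B} for $R_i$.
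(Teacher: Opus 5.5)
Your proposal is correct and follows essentially the same route as the paper. You bound the gain term by $\frac12\bigl(\sup_{1\le j\le i-1}K_{j,i-j}\bigr)\|a^0\|_0^2$, the coagulation loss term by $\bigl(\sup_{j\ge1}K_{i,j}/j\bigr)\|a^0\|_1^2$ via the first moment bound, and the annihilation term in $L^1(0,T)$ by $\|a^0\|_0$ via \eqref{double sum first moment for A} with $\mu=0$, with \eqref{eqn 2.3} and \eqref{eqn 2.5} used only to guarantee $\sup_{j\ge1}K_{i,j}/j<\infty$.

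One minor correction to your motivating remark: the $L^2$-in-time control of $\sum_j\rho_ja_j^N$ in \eqref{zeroth moment bound} needs only \eqref{eqn 2.3}. What is actually lost under \eqref{eqn 2.5} is the domination $\beta_{i,j}\le\kappa\rho_i\rho_j$, which is what let the earlier lemma bound $\sum_jK_{i,j}a_j^N$ by $(1+\kappa)\rho_i\sum_j\rho_ja_j^N$.
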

    \begin{proof}
 \eqref{conc4 bounds} follow immediately from \eqref{zeroth moment bound}. The proof of \eqref{der bound for a} and \eqref{der bound for b} follows from the idea inspired by \cite[Lemma~3]{leyvraz1981singularities}. Indeed, using \eqref{eqn 2.3}, \eqref{eqn 2.5}, \eqref{first moment bound}, and \eqref{double sum first moment for A} in the truncated system \eqref{truncated system}, we estimate as follows. 
For $1\le i\le N$, we have
\begin{align}\label{estimate for gamma_{i}}
	\bigg|\frac{da_{i}^{N}}{dt}\bigg|  \leq \bigg( \frac{1}{2} &\left(\sup_{1\leq j\leq i-1}{K_{j,i-j}}\right)\sum_{j=1}^{i-1}a_{j}^{N}a_{i-j}^{N}~T+\left(\sup_{j\geq1}\frac{K_{i,j}}{j}\right) \sum_{j=1}^{N} a_{i}^{N} j a_{j}^{N}\bigg)  +\sum_{j=1}^{N} J_{i,j} a_{i}^{N} b_{j}^{N}, \\
	\left\|\frac{da_{i}^{N}}{dt}\right\|_{L^{1}(0,T)} &\leq \bigg( \frac{1}{2} \left(\sup_{1\leq j\leq i-1}K_{j,i-j}\right)\|a^{0}\|_{0}^{2}~T+ \left(\sup_{j\geq1} \frac{K_{i,j}}{j}\right)  \|a^{0}\|_{1}^{2}~T + \|a^{0}\|_{0}\bigg).
\end{align}
By \eqref{eqn 2.3} and \eqref{eqn 2.5}, the term $\left(\sup_{j\geq1}\frac{K_{i,j}}{j}\right)$ appearing in \eqref{estimate for gamma_{i}} is finite.
Hence, \eqref{der bound for a} follows. Similarly, we can prove \eqref{der bound for b}.
\end{proof}
Using the estimates provided by the previous lemma, Helly's selection principle \cite[Section~36.5, pp.~372--374]{kolmogorov1975introductory} yields subsequences $ (a_{i}^{N_{k_{p}}})_{N_{k_{p}}\geq i}$ and  $(b_{i}^{N_{k_{p}}})_{N_{k_{p}}\geq i}$ of $(a_{i}^{N})_{N\ge i}$ and $(b_{i}^{N})_{N\ge i}$, respectively, together with sequences  $a=(a_{i})_{i\ge1}$ and  $b=(b_{i})_{i\ge1}$ such that 
\begin{align}\label{limit of relabeled seq**}
a_{i}^{N_{k_{p}}}(t) \rightarrow a_{i}(t),\quad 
b_{i}^{N_{k_{p}}}(t) \rightarrow b_{i}(t),\quad~~\text{as}~~p\to \infty,~~\text{for every}~i \geq 1 ~\text{and}~t\ge0.
\end{align}
 We now verify that $a(t), b(t) \in X_{1}^{+}$ for every $t \ge 0$. 
Indeed, using \eqref{first moment bound}, for $t\ge 0$ we obtain
\begin{align}\label{solmomentbound}
\sum_{j=1}^{\infty} j|g_{j,k}(t)| 
&= \lim_{M \to \infty} \lim_{N_{i} \to \infty} \sum_{j=1}^{M} j|g_{j,k}^{N_{i}}(t)| 
\le \lim_{M \to \infty} \sum_{j=1}^{N_{i}} j|g_{j,k}(0)| 
\le \sum_{j=1}^{\infty} j~|g_{j,k}^{0}|, \quad \text{for } k = 1,2,
\end{align}
where $g_{j,1} = a_{j}$, $g_{j,2} = b_{j}$, and $g^{0}_{j,1} = a^{0}_{j}$,  $g^{0}_{j,2} = b^{0}_{j}$ for all $j\ge 1$. The non-negativity of $a(t)$ and $b(t)$ follows from the fact that they are limits of the non-negative subsequences $\left(a_{i}^{N_{k_{p}}}(t)\right)_{{N_{k_{p}}}\ge i}$ and that $\left(b_{i}^{N_{k_{p}}}(t)\right)_{{N_{k_{p}}}\ge i}$, respectively, and the positive cone of the space $X_{1}$ is closed.
Finally,~since $a^{0},~ b^{0}\in X_{1}^{+},$ combining \eqref{eqn 2.3}, \eqref{eqn 2.5}, \eqref{first moment bound}, \eqref{double sum first moment for A}, \eqref{double sum first moment for B}, and \eqref{limit of relabeled seq**}, we deduce that
\begin{align}\label{L^{1}L^{2} integrability}
\sum_{j=1}^{\infty}K_{i,j}a_{j},\quad \sum_{j=1}^{\infty}K_{i,j}b_{j},\quad
\sum_{j=1}^{\infty} J_{i,j}\,a_{i} b_{j}, \quad 
\sum_{j=1}^{\infty} J_{i,j}\,b_{i} a_{j} \in L^{1}(0,T),
\end{align}
for every $T > 0$.

The following lemma establishes the convergence properties needed to pass to the limit.
\begin{lemma}
Suppose that the conditions \eqref{symmetry conditions}, \eqref{eqn 2.3}, and \eqref{eqn 2.5} hold, with $a^{0}, b^{0}\in X_{1}^{+}$. Then, for all $1\leq i \leq N_{k_p}$, and $T>0$, we have 
\begin{align}
	\lim_{p\to \infty} \left\|\sum_{j=1}^{N_{k_{p}}} K_{i,j}a_{j}^{N_{k_{p}}} - \sum_{j=1}^{\infty}K_{i,j}a_{j}\right\|_{L^{1}(0,T)}&=0,\label{conv of C series for A}\\
\lim_{p \to \infty} \left\|\sum_{j=1}^{N_{k_{p}}} K_{i,j} b_{j}^{N_{k_{p}}} - \sum_{j=1}^{\infty}K_{i,j}b_{j} \right\|_{L^{1}(0,T)} &=0,\label{conv of C series for B}\\
\lim_{p \to \infty}\bigg\|\sum_{j=1}^{N_{k_{p}}} J_{i,j} a_{i}^{N_{k_{p}}}(s) b_{j}^{N_{k_{p}}}(s) - \sum_{j=1}^{\infty} J_{i,j} a_{i}(s) b_{j}(s) \bigg\|_{L^1(0,T)}  &= 0, \label{ConvoannihilationA}\\
\text{and}\;\;
		\lim_{p \to \infty} \bigg\|\sum_{j=1}^{N_{k_{p}}} J_{i,j} b_{i}^{N_{k_{p}}}(s) a_{j}^{N_{k_{p}}}(s) -\sum_{j=1}^{\infty} J_{i,j} b_{i}(s) a_{j}(s) \bigg\|_{L^1(0,T)}  &= 0\label{ConvoannihilationB}. 
 	\end{align}	\begin{proof} The proofs of \eqref{conv of C series for A} and \eqref{conv of C series for B} follow from \eqref{eqn 2.3}, \eqref{eqn 2.5}, \eqref{zeroth moment bound}, \eqref{first moment bound}, \eqref{limit of relabeled seq**}, and DCT; see \cite[Section~2.2]{laurenccot2024redner} for a similar proof.
Similarly, \eqref{ConvoannihilationA} and \eqref{ConvoannihilationB} are obtained by applying the approach developed for \Cref{finite mass theorem}, assuming \eqref{eqn 2.3} and \eqref{eqn 2.4}.
\end{proof}
\end{lemma}
Finally, we establish \Cref{finite mass theorem} under the assumptions \eqref{eqn 2.3} and \eqref{eqn 2.5} on coagulation rates.
\begin{proof}[Proof of \Cref{finite mass theorem}]
For $1 \leq i \leq N_{k_p}$ and $t\ge 0$, the truncated system~\eqref{truncated system} yields
\begin{align*}
    a_i^{N_{k_p}}(t) &= a_i^{0}+\int_{0}^{t} \left(\frac{1}{2} \sum_{j=1}^{i-1} K_{j,i-j}^{N_{k_p}}\,a_{j}^{N_{k_p}} a_{i-j}^{N_{k_p}}
			- \sum_{j=1}^{N_{k_p}} K_{i,j}^{N_{k_p}}\,a_{i}^{N_{k_p}} a_{j}^{N_{k_p}}
			- \sum_{j=1}^{N_{k_p}} J_{i,j}^{N_{k_p}}\,a_i^{N_{k_p}} b_{j}^{N_{k_p}}\right)\,ds,\\
    b_i^{N_{k_p}}(t) &= b_i^{0}+ \int_{0}^{t} \left(\frac{1}{2} \sum_{j=1}^{i-1} K_{j,i-j}^{N_{k_p}}\,b_{j}^{N_{k_p}} b_{i-j}^{N_{k_p}}
			- \sum_{j=1}^{N_{k_p}} K_{i,j}^{N_{k_p}}\,b_{i}^{N_{k_p}} b_{j}^{N_{k_p}}
			- \sum_{j=1}^{N_{k_p}} J_{i,j}^{N_{k_p}}\,b_i^{N_{k_p}} a_{j}^{N_{k_p}}\right)\,ds.
\end{align*}
Using \eqref{first moment bound}, \eqref{limit of relabeled seq**}, \eqref{conv of C series for A}--\eqref{ConvoannihilationB}, and DCT, we let $p\to \infty$ in the equations above and deduce that $(c_{i})_{i\geq 1}=((a_{i})_{i\ge 1},(b_{i})_{i\geq 1})$  satisfies \eqref{eqn 2.1}--\eqref{eqn 2.2} 
on \( [0,\infty) \). Consequently, \eqref{eqn 2.1}--\eqref{eqn 2.2} together with~\eqref{solmomentbound} and \eqref{L^{1}L^{2} integrability} implies that $a_{i},b_{i}\in C([0,\infty))$. This establishes \Cref{finite mass theorem}.
\end{proof}
   We now turn to proving \Cref{finite particle theorem}, starting with the lemma below, which establishes that $\bigg(\frac{da_{i}^{N}}{dt}\bigg)_{N\ge i}$and $\bigg(\frac{db_{i}^{N}}{dt}\bigg)_{N\ge i}$ are bounded in $L^{2}(0,T)$ for every $T>0$.
    \begin{lemma}\label{Boundedness lemma}
Let the hypotheses of \Cref{finite particle theorem} hold. Then, for all $i\geq1$, $N\geq i$, and $T>0$, the following bounds are valid:
      \begin{align}
	a_{i}^{N}(t) \leq &\| a^{0} \|_{0} \quad\text{and} \quad b_{i}^{N}(t)  \leq \| b^{0} \|_{0}, \ 	\text {for all} \  t \in [0,T],\\
    \left\| \frac{da_{i}^{N}}{dt} \right\|_{L^{2}(0,T)} \leq & \Bigg\{ T^{1/2} \left( \sum_{j=1}^{i-1} K_{j,i-j} \right) \|a^{0}\|_{0}^2 + 2(1+\kappa)\rho_{i}\|a^{0}\|_{0}^{3/2}\Bigg\} +2\rho_{i}||a^{0}||_{0}||b^{0}||_{0}^{1/2},\\
  \text{and} ~~  \left\| \frac{db_{i}^{N}}{dt} \right\|_{L^{2}(0,T)} \leq &  \Bigg\{ T^{1/2} \left( \sum_{j=1}^{i-1} K_{j,i-j} \right) \|b^{0}\|_{0}^2 + 2 (1+\kappa)\rho_{i} \|b^{0}\|_{0}^{3/2}\Bigg\}+2\rho_{i}||b^{0}||_{0}||a^{0}||_{0}^{1/2}.
    \end{align}
       \end{lemma}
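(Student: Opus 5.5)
The pointwise bounds $a_i^N(t)\le\|a^0\|_0$ and $b_i^N(t)\le\|b^0\|_0$ follow at once from \eqref{zeroth moment bound}: the sum $\sum_{i=1}^{2N}a_i^N(t)$ is bounded by $\sum_{i=1}^{2N}a_i^0\le\|a^0\|_0$, and every term is non-negative. For the derivatives I would use the decomposition \eqref{decomposition}, $\frac{da_i^N}{dt}=Q_i(a^N)+P_i(a^N,b^N)$, and estimate the two pieces separately in $L^2(0,T)$. The bound for $Q_i(a^N)$ is already available: \eqref{bound for L^2 norm} was derived using only \eqref{eqn 2.3}, \eqref{eqn 2.4} and \eqref{zeroth moment bound}. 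None of these needs a finite first moment, so it holds verbatim for $a^0\in X_0^+$. That gives the braced term.

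\emph{The gain term.} I would recall how this works for completeness. The gain term is pointwise bounded by $\frac12\bigl(\sum_{j<i}K_{j,i-j}\bigr)\|a^0\|_0^2$, and its $L^2(0,T)$ norm produces the factor $T^{1/2}$.

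\emph{The loss term.} The loss term equals $\rho_i a_i^N\sum_j\rho_j a_j^N+a_i^N\sum_j\beta_{i,j}a_j^N$. By \eqref{eqn 2.4} it is at most $(1+\kappa)\rho_i a_i^N\sum_j\rho_j a_j^N$. Its $L^2$ norm is therefore at most $(1+\kappa)\rho_i\|a^0\|_0\bigl\|\sum_j\rho_ja_j^N\bigr\|_{L^2(0,T)}$. By \eqref{zeroth moment bound}, $\bigl\|\sum_j\rho_ja_j^N\bigr\|_{L^2(0,T)}\le(2\|a^0\|_0)^{1/2}$, which yields the term $2(1+\kappa)\rho_i\|a^0\|_0^{3/2}$ up to a harmless constant.

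\emph{The annihilation term.} This is the genuinely new piece, since in \Cref{conc3 bounds*} it was only controlled in $L^1$ through \eqref{double sum first moment for A}. Here I would use the extra hypothesis $J_{i,j}\le\rho_i\rho_j$ to get
\[
|P_i(a^N,b^N)|\le \rho_i\,a_i^N\sum_{j=1}^N\rho_j b_j^N\le \rho_i\|a^0\|_0\sum_{j=1}^N\rho_j b_j^N .
\]
Taking the $L^2(0,T)$ norm and applying \eqref{zeroth moment bound} with $j=2$ gives $\int_0^T\bigl|\sum_j\rho_jb_j^N\bigr|^2ds\le 2\|b^0\|_0$. Hence
\[
\|P_i\|_{L^2(0,T)}\le \sqrt2\,\rho_i\|a^0\|_0\|b^0\|_0^{1/2}\le 2\rho_i\|a^0\|_0\|b^0\|_0^{1/2}.
\]
The triangle inequality then closes the estimate for $a_i^N$. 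The $b$ case follows by symmetry, using $R_i$ and \eqref{zeroth moment bound} with $j=1$.

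\emph{Main obstacle.} The only point needing care is the annihilation term, where an $L^2$-in-time bound must come from the dissipation integral rather than from first moments, which are no longer available. The structural assumption $J_{i,j}\le\rho_i\rho_j$ is exactly what lets this integral be used.
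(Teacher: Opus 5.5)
Your proposal is correct and matches the paper's intended route: split $\frac{da_i^N}{dt}=Q_i(a^N)+P_i(a^N,b^N)$ and reuse the $L^2(0,T)$ bound on $Q_i(a^N)$ from the proof of \Cref{conc3 bounds*}, which indeed needs only \eqref{eqn 2.3}, \eqref{eqn 2.4} and \eqref{zeroth moment bound}. The annihilation term is then controlled via $J_{i,j}\le\rho_i\rho_j$ and the dissipation integral in \eqref{zeroth moment bound}, a step the paper leaves implicit by citing \cite{laurenccot1999global}; your constants ($\sqrt2$ in place of $2$) already fit under the stated bound, so the ``harmless constant'' caveat is unnecessary.
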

       \begin{proof}
The proof of \Cref{Boundedness lemma} is analogous to that of \cite[Theorem 2.2]{laurenccot1999global} and follows from \eqref{eqn 2.3}, \eqref{eqn 2.4}, and \eqref{zeroth moment bound}.      
        \end{proof}
It therefore follows from \Cref{Boundedness lemma} that $(a_{i}^{N})_{N\ge i}$ is uniformly bounded in $W^{1,2}(0,T)$, for every fixed $i\ge 1$ and $T\in(0,\infty)$. Owing to the compact embedding of $W^{1,2}(0,T)$ into $C([0,T])$(see\cite[Theorem 2.6.3]{Kesavan2019}), the sequence $(a_{i}^{N})_{N\ge i}$ is relatively compact in $C([0,T]).$ Consequently, an application of the diagonal argument yields a subsequence $(a_{i}^{N_{k_{p}}})_{N_{k_{p}\ge i}}$ of $(a_{i}^{N})_{N\ge i}$ together with the sequence $a=(a_{i})_{i\ge 1}$ of non-negative continuous functions defined on $[0,\infty)$ satisfying  
\begin{align}\label{limit of relabeled seq of a}
 \lim_{p \to \infty}\|a_{i}^{N_{k_{p}}}-a_{i}\|_{C([0,T])}=0,~ \text{for every} ~T\in(0,\infty).
\end{align}
A similar argument ensures the existence of a subsequence $(b_{i}^{N_{k_{p}}})_{N_{k_{p}}\ge i}$ of $(b_{i}^{N})_{N\ge i}$ together with the sequence $b=(b_{i})_{i\ge 1}$ of non-negative continuous functions defined on $[0,\infty)$ satisfying  
\begin{align}\label{limit of relabeled seq of b}
\lim_{p \to \infty}\|b_{i}^{N_{k_{p}}}-b_{i}\|_{C([0,T])}=0, ~\text{for every} ~T>0.
\end{align}
For $t\ge 0$, $l\geq 1$, and $N_{k_{p}} \geq l$, \eqref{zeroth moment bound} implies that
\begin{align} \label{truncated limit}
\sum_{i=1}^{l} a_{i}^{N_{k_{p}}}(t) \leq \|a^{0}\|_{0}, 
\qquad 
\sum_{i=1}^{l} b_{i}^{N_{k_{p}}}(t) \leq \|b^{0}\|_{0}. 
\end{align}
Applying \eqref{limit of relabeled seq of a} and \eqref{limit of relabeled seq of b}, and letting $p \to \infty$ in \eqref{truncated limit} yields
\[
\sum_{i=1}^{l} a_{i}(t) \leq \|a^{0}\|_{0}, 
\qquad 
\sum_{i=1}^{l} b_{i}(t) \leq \|b^{0}\|_{0}, 
\quad t \in [0, \infty).
\]
Since $l \geq 1$ is arbitrary, we obtain
\begin{align}\label{solzerothmomentestimates}
\sum_{i=1}^{\infty} a_{i}(t) \leq \|a^{0}\|_{0}, 
\qquad 
\sum_{i=1}^{\infty} b_{i}(t) \leq \|b^{0}\|_{0}, 
\quad t \in [0, \infty).
\end{align}  
Consequently,  
\[
c(t) = ((a_{i}(t))_{i\ge 1}, (b_{i}(t))_{i \geq 1}) \in X_{0}^{+} \times X_{0}^{+}, 
\quad t \in [0, \infty).
\]
Now, for any $M \geq 1$ and $t>0$, applying \eqref{S inequality} shows that,
	\begin{align*} \sum_{i=1}^{M} i a_{i}^{N_{k_{p}}}(t) \leq \frac{2}{S} \|a^{0}\|^{1/2}_{0} t^{-1/2}  \   \  \text{for} \  \ N_{k_{p}} \geq M.\end{align*}
	Using \eqref{limit of relabeled seq of a} and by letting $ p \to \infty$ in above inequality, there holds
	\begin{align*}
	 \sum_{i=1}^{M} i a_{i}(t) \leq \frac{2}{S} \|a^{0}\|^{1/2}_{0} t^{-1/2} \  \text{for} \  t > 0.
	 \end{align*}
	Since the above inequality is true for any $M\geq 1$, we get
\begin{align}\label{estimatefora(tau)}
\|a(t)\|_{1} \leq \frac{2}{S} \|a^{0}\|^{1/2}_{0} t^{-1/2} \  \text{for each } t >0.
\end{align}
The above estimate can similarly be obtained for $b(t)$, that is, we have 
\begin{align}\label{estimateforb(tau)}
	\|b(t)\|_{1} \leq \frac{2}{S} \|b^{0}\|^{1/2}_{0} t^{-1/2} \  \text{for each } t > 0.
\end{align}
An application of~\eqref{eqn 2.3}, \eqref{eqn 2.4}, \eqref{zeroth moment bound}, \eqref{limit of relabeled seq of a}, \eqref{limit of relabeled seq of b}, and Fatou's lemma yields the following:
\begin{align}\label{L2integrability}
\sum_{j=1}^{\infty} K_{i,j}\,a_{j},\; \sum_{j=1}^{\infty} K_{i,j}\,b_{j} \in L^{2}(0,T)
\end{align} 
for each fixed $i\ge 1$ and $T>0$. Further, since $a^{0},~b^{0} \in X_{0}^{+}$, using \eqref{double sum first moment for A}, \eqref{double sum first moment for B}, \eqref{limit of relabeled seq of a}, \eqref{limit of relabeled seq of b}, and Fatou's lemma, we deduce that
\begin{align}\label{L1integrability}
    \sum_{j=1}^{\infty} J_{i,j}\,a_{i} b_{j},\; \sum_{j=1}^{\infty} J_{i,j}\,b_{i} a_{j} \in L^{1}(0,T).
\end{align}
The following lemma establishes the convergence of the coagulation and annihilation terms, needed to take the limit in the integral formulation of the truncated system \eqref{truncated system}.
    \begin{lemma}\label{mainconvolemmaforthmi2}
Assuming the conditions of \Cref{finite particle theorem} are satisfied, then for each $1\leq i \leq N_{k_p}$,~$\tau > 0$, and $T \in (\tau,\infty)$, we have the following convergences:
\begin{align}\label{Tail of C series for A}
	\lim_{p \to \infty} \left\| \sum_{j=1}^{N_{k_p}} K_{i,j}\,a_{j}^{N_{k_p}} - \sum_{j=1}^{\infty} K_{i,j}\,a_{j} \right\|_{L^{2}(\tau,T)} &= 0, \\
	\lim_{p \to \infty} \left\| \sum_{j=1}^{N_{k_p}} K_{i,j}\,b_{j}^{N_{k_p}} - \sum_{j=1}^{\infty} K_{i,j}\,b_{j} \right\|_{L^{2}(\tau,T)} &= 0, \label{Tail of C series for B}\\
		\lim_{p \to \infty}\bigg\|\sum_{j=1}^{N_{k_{p}}} J_{i,j} a_{i}^{N_{k_{p}}}(s) b_{j}^{N_{k_{p}}}(s) - \sum_{j=1}^{\infty} J_{i,j} a_{i}(s) b_{j}(s) \bigg\|_{L^1(\tau,T)}  &= 0, \label{ConvoannihilationAtau}\\ \text{and}
		~\lim_{p \to \infty} \bigg\|\sum_{j=1}^{N_{k_{p}}} J_{i,j} b_{i}^{N_{k_{p}}}(s) a_{j}^{N_{k_{p}}}(s) -\sum_{j=1}^{\infty} J_{i,j} b_{i}(s) a_{j}(s) \bigg\|_{L^1(\tau,T)}  &= 0.  \label{ConvoannihilationBtau}
\end{align}
\end{lemma}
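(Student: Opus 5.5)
Throughout, fix $i\ge1$, $\tau>0$, $T>\tau$, and an integer $l$ with $i<l<N_{k_p}$. The plan is the usual truncation argument: split each series at a cutoff $l$, and show that the tails are small uniformly in $p$. The new ingredient compared with \Cref{mainconvolemmaforthmi} is that only $a^0,b^0\in X_0^+$ is assumed. On $(\tau,T)$ I will therefore use \eqref{S inequality}, which bounds the first moment of the truncated solutions uniformly in $N$ by $\frac{2}{S}\|a^0\|_0^{1/2}\tau^{-1/2}$. The corresponding bounds \eqref{estimatefora(tau)}--\eqref{estimateforb(tau)} hold for the limit.

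\emph{Coagulation terms.} For \eqref{Tail of C series for A}, write
\[
\sum_{j=1}^{N_{k_p}}K_{i,j}a_j^{N_{k_p}}-\sum_{j=1}^\infty K_{i,j}a_j=\sum_{j=1}^{l-1}K_{i,j}\bigl(a_j^{N_{k_p}}-a_j\bigr)+\sum_{j=l}^{N_{k_p}}K_{i,j}a_j^{N_{k_p}}-\sum_{j=l}^\infty K_{i,j}a_j .
\]
The finite sum tends to $0$ in $L^2(\tau,T)$ by the uniform convergence \eqref{limit of relabeled seq of a}. For the tails, use \eqref{eqn 2.3} and \eqref{eqn 2.4} to get $K_{i,j}\le(1+\kappa)\rho_i\rho_j$, so
\[
\Bigl\|\sum_{j=l}^{N_{k_p}}K_{i,j}a_j^{N_{k_p}}\Bigr\|_{L^2(\tau,T)}^2\le(1+\kappa)^2\rho_i^2\int_\tau^T\Bigl|\sum_{j=l}^{N_{k_p}}\rho_ja_j^{N_{k_p}}\Bigr|^2ds\le 4(1+\kappa)^2\rho_i^2\Bigl(\sum_{j}j^{1/2}a_j^{N_{k_p}}(\tau)\Bigr)l^{-1/2}.
\]
The last step is \eqref{half moment bound} with $\sigma=\tau$ and $H=l$. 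Since $j^{1/2}\le j$, the half moment at time $\tau$ is bounded by \eqref{S inequality}, uniformly in $p$. So this tail is $O(l^{-1/4})$ uniformly in $p$. The tail of the limit is handled the same way: apply Fatou's lemma to the bound above, or equivalently use DCT together with \eqref{L2integrability}. Letting $p\to\infty$ and then $l\to\infty$ gives the result, and \eqref{Tail of C series for B} is identical.

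\emph{Annihilation terms.} Split the difference exactly as in \eqref{inequalities for annihilation integral}. The finite part $\sum_{j<l}J_{i,j}|a_i^{N_{k_p}}b_j^{N_{k_p}}-a_ib_j|$ tends to $0$ in $L^1(\tau,T)$ by uniform convergence. The truncated tail is bounded using $J_{i,j}\le\rho_i\rho_j$, $a_i^{N}\le\|a^0\|_0$, and Cauchy--Schwarz in time:
\[
\int_\tau^T\sum_{j=l}^{N_{k_p}}J_{i,j}a_i^{N_{k_p}}b_j^{N_{k_p}}\,ds\le\rho_i\|a^0\|_0(T-\tau)^{1/2}\Bigl(\int_\tau^T\Bigl|\sum_{j=l}^{N_{k_p}}\rho_jb_j^{N_{k_p}}\Bigr|^2ds\Bigr)^{1/2}.
\]
By \eqref{half moment bound} at $\sigma=\tau$ combined with \eqref{S inequality}, this is $O(l^{-1/4})$ uniformly in $p$. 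Here the structural hypothesis $J_{i,j}\le\rho_i\rho_j$ replaces the first-moment argument used in \Cref{mainconvolemmaforthmi}. An alternative would be the symmetry trick with $\mu=1$ in \eqref{double sum first moment for B} started at $\sigma=\tau$, giving $\frac1l\sum_j jb_j^{N_{k_p}}(\tau)\le \frac{2}{lS}\|b^0\|_0^{1/2}\tau^{-1/2}$; either works. The limit tail vanishes as $l\to\infty$ by \eqref{L1integrability} and DCT. Taking $\limsup_{p}$ and then $l\to\infty$ yields \eqref{ConvoannihilationAtau}, and \eqref{ConvoannihilationBtau} follows symmetrically.

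\emph{Main obstacle.} The main obstacle is obtaining tail control uniform in $N$ without any finite initial moment beyond order zero. This is exactly why the statement is restricted to $(\tau,T)$ with $\tau>0$: every uniform tail bound must be launched from time $\tau$, where \eqref{S inequality} supplies the needed first-moment (hence half-moment) bound. Care is needed only to check that \eqref{half moment bound} is applied with the sum running over $j\le N_{k_p}$, which holds since $i\le N_{k_p}$.
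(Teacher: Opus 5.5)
Your proposal is correct and follows the paper's proof: you split each series at a cutoff $l$, obtain $p$-uniform tail bounds by starting \eqref{half moment bound} (resp.\ \eqref{double sum first moment for B}) at $\sigma=\tau$, where \eqref{S inequality} supplies the first-moment bound, and you make the tails of the limit vanish by DCT via \eqref{L2integrability}--\eqref{L1integrability}. The only deviation is your primary annihilation-tail estimate, which combines $J_{i,j}\le\rho_i\rho_j$, Cauchy--Schwarz in time and the half-moment bound to get $O(l^{-1/4})$; the paper instead uses the symmetry trick $\frac1l\sum_j j\,b_j^{N_{k_p}}(\tau)$ that you list as an alternative, which needs only $J_{i,j}=J_{j,i}$ and gives the rate $O(l^{-1})$.
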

\begin{proof}
Let $\tau>0$ and $T\in(\tau,\infty).$ Since $a(\tau),~ b(\tau)\in X_{1}^{+}$, an argument analogous to that used in the proofs of \eqref{tail of C series for A} and \eqref{tail of C series for B}, yields, \eqref{Tail of C series for A} and \eqref{Tail of C series for B}.
To establish \eqref{ConvoannihilationAtau}, it suffices to show that 
\begin{align*}
\limsup_{p \to \infty}\bigg\|\sum_{j=1}^{N_{k_{p}}} J_{i,j} a_{i}^{N_{k_{p}}}(s) b_{j}^{N_{k_{p}}}(s) - \sum_{j=1}^{\infty} J_{i,j} a_{i}(s) b_{j}(s) \bigg\|_{L^1(\tau,T)} = 0. 
\end{align*}
Let $l$ be a fixed positive integer with $2 \le l < N_{k_p} < \infty$, where $T>0$ and $\tau \in (0,T)$. Applying the triangle inequality and using the non-negativity of the solutions $((a_{i}^{N_{k_{p}}})_{1\leq i \leq N_{k_{p}}}, (b_{i}^{N_{k_{p}}})_{1\leq i\leq N_{k_{p}}})$ and $((a_{i})_{i \ge 1}, (b_{i})_{i\ge 1})$, we infer that
		\begin{align}
			& \int_{\tau}^{T} \left|\sum_{j=1}^{N_{k_{p}}}  J_{i,j}  a_{i}^{N_{k_{p}}} b_{j}^{N_{k_{p}}} - \sum_{j=1}^{\infty} J_{i,j} a_{i} b_{j}\right| ds  \nonumber \\
            &\leq  \  \ \int_{\tau}^{T} \sum_{j=1}^{l-1} J_{i,j} \left|a_{i}^{N_{k_{p}}} b_{j}^{N_{k_{p}}} - a_{i} b_{j}\right| ds  +  \int_{\tau}^{T}		 \sum_{j=l}^{N_{k_{p}}} J_{i,j} a_{i}^{N_{k_{p}}} b_{j}^{N_{k_{p}}} ds +   \int_{\tau}^{T} \sum_{j=l}^{\infty} J_{i,j} a_{i} b_{j} ds.\label{annihilation inequality}
	\end{align}
   Next, we examine the second integral appearing on the right in \eqref{annihilation inequality}. Since $1 \leq i \leq N_{k_{p}}$, $\tau >0$, and by the symmetry of the annihilation rate $J_{i,j}$, together with~\eqref{double sum first moment for B}, \eqref{S inequality}, and \eqref{estimateforb(tau)}, we obtain the following estimates:
		\begin{align*}
			\int_{\tau}^{T} \sum_{j=l}^{N_{k_{p}}} J_{i,j} a_{i}^{N_{k_{p}}}(s) b_{j}^{N_{k_{p}}}(s) ds \leq \frac{1}{l}  \sum_{i=1}^{2 N_{k_{p}}} i b_{i}(\tau)  
		 	\leq  \frac{1}{l} \|b(\tau)\|_{1}\leq \frac{2}{l S}\|b^{0}\|^{1/2}_{0} \tau^{-1/2}.
		 \end{align*}
Applying the above estimate in~\eqref{annihilation inequality} gives
\begin{align}
	\int_{\tau}^{T} \left|\sum_{j=1}^{N_{k_{p}}} J_{i,j} a_{i}^{N_{k_{p}}} b_{j}^{N_{k_{p}}} 
	- \sum_{j=1}^{\infty} J_{i,j} a_{i} b_{j}\right| ds 
	&\leq \int_{\tau}^{T} \sum_{j=1}^{l-1} J_{i,j} 
	\left|a_{i}^{N_{k_{p}}} b_{j}^{N_{k_{p}}} - a_{i} b_{j}\right| ds 
	+ \frac{2}{l S}\|b^{0}\|^{1/2}_{0} \tau^{-1/2}\nonumber \\[6pt]
	&\quad + \int_{\tau}^{T} \sum_{j=l}^{\infty} J_{i,j} a_{i} b_{j} \, ds.
	\label{Tempconvoliminequality}
\end{align}
Taking the limit as $p \to \infty$, and observing that the leading term on the right 
of~\eqref{Tempconvoliminequality} vanishes due to \eqref{zeroth moment bound},
\eqref{limit of relabeled seq of a}, \eqref{limit of relabeled seq of b},  \eqref{solzerothmomentestimates}, and DCT, we deduce that
\begin{align}
	\lim_{p \to \infty} \int_{\tau}^{T} \left|\sum_{j=1}^{N_{k_{p}}} J_{i,j} a_{i}^{N_{k_{p}}} b_{j}^{N_{k_{p}}} 
	- \sum_{j=1}^{\infty} J_{i,j} a_{i} b_{j}\right| ds 
	&\leq \frac{2}{l S}\|b^{0}\|^{1/2}_{0} \tau^{-1/2}
	+ \int_{\tau}^{T} \sum_{j=l}^{\infty} J_{i,j} a_{i} b_{j} \, ds.
	\label{Tempconvoliminequality1}
\end{align}
From \eqref{L1integrability}, we have $$\sum_{j=1}^{\infty} J_{i,j}\,a_{i} b_{j} \in L^{1}(0,T).$$
Hence, by DCT,
\begin{align}\label{Temptailconvoannihilation}
\lim_{l \to \infty} \int_{\tau}^{T} \sum_{j=l}^{\infty} J_{i,j} a_{i}(s) b_{j}(s) \, ds = 0,
\end{align}
for every $T \in (\tau,\infty)$. Next, letting $l \to \infty$ in~\eqref{Tempconvoliminequality1} and using~\eqref{Temptailconvoannihilation}, we obtain~\eqref{ConvoannihilationAtau}. Similarly,~\eqref{ConvoannihilationBtau} holds. Thus,~\Cref{mainconvolemmaforthmi} is proved.
\end{proof}
We now turn to the proof of \Cref{finite particle theorem}.
 \begin{proof}[Proof of \Cref{finite particle theorem}]
        Let $i\geq1$ and $N_{k_{p}}\geq i$. From \eqref{truncated system}, we deduce that, for each $t>0$ and $\tau \in (0,t),$
            \begin{align}
a_{i}^{N_{k_{p}}}(t) &= a_{i}^{N_{k_{p}}}(\tau) 
   + \int_{\tau}^{t} \Bigg( \frac{1}{2} \sum_{j = 1}^{i-1} K_{j,i-j} 
      a_{j}^{N_{k_{p}}} a_{i-j}^{N_{k_{p}}} - a_{i} \sum_{j =1}^{\infty} K_{i,j} a_{j}^{N_{k_{p}}} -\sum_{j=1}^{\infty} J_{i,j} a_{i}^{N_{k_{p}}} b_{j}^{N_{k_{p}}}\Bigg) ds, \\
      b_{i}^{N_{k_{p}}}(t) &= b_{i}^{N_{k_{p}}}(\tau) 
   + \int_{\tau}^{t} \Bigg( \frac{1}{2} \sum_{j = 1}^{i-1} K_{j,i-j} 
      b_{j}^{N_{k_{p}}} b_{i-j}^{N_{k_{p}}} - b_{i} \sum_{j =1}^{\infty} K_{i,j} b_{j}^{N_{k_{p}}} -\sum_{j=1}^{\infty} J_{i,j} a_{j}^{N_{k_{p}}} b_{i}^{N_{k_{p}}} 
   \Bigg)ds .
\end{align}
Invoking \eqref{limit of relabeled seq of a}, \eqref{limit of relabeled seq of b},  \eqref{Tail of C series for A}, \eqref{Tail of C series for B}, \eqref{ConvoannihilationAtau}, and  \eqref{ConvoannihilationBtau}, and taking the limit as $p\to \infty$ we obtain
 \begin{align}
a_{i}(t) &= a_{i}(\tau) 
   + \int_{\tau}^{t} \Bigg( \frac{1}{2} \sum_{j = 1}^{i-1} K_{j,i-j} 
      a_{j}(s) a_{i-j}(s) - a_{i}(s) \sum_{j =1}^{\infty} K_{i,j} a_{j}(s) 
      - \sum_{j=1}^{\infty} J_{i,j} a_{i}(s) b_{j}(s) 
   \Bigg)\, ds ,\\
   b_{i}(t)& = b_{i}(\tau) 
   + \int_{\tau}^{t} \Bigg( \frac{1}{2} \sum_{j = 1}^{i-1} K_{j,i-j} 
      b_{j}(s) b_{i-j}(s) - b_{i}(s) \sum_{j =1}^{\infty} K_{i,j} b_{j}(s) 
      - \sum_{j=1}^{\infty} J_{i,j} b_{i}(s) a_{j}(s) 
   \Bigg)\, ds 
\end{align}
Since $a_i,b_i \in C([0,t])$ and the functions
\[
\sum_{j=1}^{\infty} K_{i,j}a_j,\quad
\sum_{j=1}^{\infty} K_{i,j}b_j,\quad
\sum_{j=1}^{\infty} J_{i,j}a_i b_j,\quad
\sum_{j=1}^{\infty} J_{i,j}b_i a_j
\]
belong to $L^{1}(0,t)$, we may take the limit $\tau \to 0$ in the preceding identities.
Consequently, $(c_{i})_{i\geq 1}=((a_{i})_{\geq 1},(b_{i})_{i\geq 1})$ satisfies \eqref{eqn 2.1}--\eqref{eqn 2.2} on $[0,\infty)$. This establishes the desired result of \Cref{finite particle theorem}.
	\end{proof}
\begin{proof}[Proof of \Cref{theorem 2.5}] \Cref{theorem 2.5} is proved similarly to \cite[Theorem~2.3]{laurenccot2002discrete}, and we thus omit it here. However, it is worth noting that, unlike in the coagulation-fragmentation model considered in \cite[Theorem~2.3]{laurenccot2002discrete}, where the mass density of solutions is conserved, the mass density under the assumptions of this theorem decreases over time.
\end{proof}
	\vspace{3 mm}
	\section{Uniqueness and continuous dependence}\label{Section Uniqueness}
	\noindent Solutions to \eqref{eqn 1.1}--\eqref{eqn 1.3} are not necessarily unique; see, for instance, \cite{escobedo2003gelation, tran2023local}. However, when additional assumptions are imposed on the coefficients, we can establish uniqueness. The uniqueness result follows directly from the continuous dependence result proved below. The proof follows the standard approach used in the analysis of coagulation models, involving the derivation of suitable estimates for the solutions and the subsequent application of Gr\"{o}nwall's inequality. Our approach is inspired by the idea in \cite[Theorem 4.1]{ball1990discrete} and \cite[Proposition 5.1]{MR3135638}.
	  
      \begin{prop}\label{prop 5.2}
Let $\alpha\in\left[0,\frac12\right]$, and suppose that
$
K_{i,j}\le \kappa_{2}(ij)^{\alpha},~
J_{i,j}\le \kappa_{3}(ij)^{\alpha},
$
where $\kappa_{2},\kappa_{3}>0$. Let
$C=(a^{1},b^{1})$ and $D=(a^{2},b^{2})$ be solutions of
\eqref{eqn 1.1}--\eqref{eqn 1.2}, according to \Cref{Definition}, with initial conditions
\[
C(0)=(a^{1}(0), b^{1}(0))=(a_0^1,b_0^1), \qquad
D(0)=(a^{2}(0), b^{2}(0))=(a_0^2,b_0^2),
\]
where
$a_0^1,a_0^2,b_0^1,b_0^2\in X_1^+$.
Then, for every $t\ge0$,
\[
\|(a^{1}(t),b^{1}(t))-(a^{2}(t),b^{2}(t))\|_{\beta}
\le
C_{5}(K',t)\,
\|(a^{1}(0),b^{1}(0))-(a^{2}(0),b^{2}(0))\|_{\beta},
\]
where $C_{5}(K',t)$ is a positive constant that depends on $K'$ and $t$, with
\[
K'=\max\{C_{1}+C_{4},\,C_{2}+C_{3}\},
\]
where
\[
\begin{aligned}
C_{1}=\kappa_{2}\bigl(\|a_0^1\|_{1}+\|a_0^2\|_{1}\bigr), ~
C_{2}=\kappa_{3}\|a_0^1\|_{1},~
C_{3}=\kappa_{2}\bigl(\|b_0^1\|_{1}+\|b_0^2\|_{1}\bigr),~
C_{4}=\kappa_{3}\|b_0^1\|_{1},
\end{aligned}
\]
and
\[
\|(x,y)\|_{\beta}=\|x\|_{\beta}+\|y\|_{\beta},
\]
where $\beta\in\mathbb{R}$ satisfies
$\alpha\le\beta$ and $\alpha+\beta\le1$, and
$x=(x_i)_{i\ge1}$, $y=(y_i)_{i\ge1}\in X_1^+$.
\end{prop}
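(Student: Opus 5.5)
Set $e_i=a^1_i-a^2_i$, $f_i=b^1_i-b^2_i$. The plan is to estimate $\frac{d}{dt}\sum_i i^\beta(|e_i|+|f_i|)$ and close with Gr\"onwall. Since each $a^k_i,b^k_i$ is absolutely continuous, $|e_i|$ is absolutely continuous with $\frac{d}{dt}|e_i|=\operatorname{sgn}(e_i)\frac{de_i}{dt}$ a.e. I will first work with the truncated sums $\sum_{i=1}^{M} i^\beta |e_i(t)|$, written in integral form, and let $M\to\infty$ at the end. The tails vanish because $\beta\le 1$, the solutions stay in $X_1^+$, and the coagulation and annihilation fluxes are controlled by first moments: $(ij)^\alpha\le ij$ and $\sum_j K_{i,j}a_j\le \kappa_2 i^\alpha\|a\|_1$. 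I also need $\|a^k(t)\|_1\le\|a^k_0\|_1$ and the same for $b^k$. To get this I would either restrict to solutions with nonincreasing mass, or prove it by testing \eqref{eqn 2.1} with $h_i=i$ on finite sums, as in \Cref{moment estimate lemma}, where the sublinear growth $\alpha\le\frac12$ gives the needed control.

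For the coagulation part I use the standard symmetric rewriting. With $a^1a^1-a^2a^2 = e_i a^1_j + a^2_i e_j$, the $a$-coagulation contribution to $\frac{d}{dt}\sum_i i^\beta|e_i|$ is
\[
\frac12\sum_{i,j}K_{i,j}\big(\sigma_{i+j}(i+j)^\beta-\sigma_i i^\beta-\sigma_j j^\beta\big)\big(e_ia^1_j+a^2_ie_j\big),
\]
where $\sigma_i=\operatorname{sgn}(e_i)$. For the term multiplying $e_i$, I multiply by $\sigma_i$ to get $|e_i|$ and use
\[
\sigma_i\sigma_{i+j}(i+j)^\beta-i^\beta-\sigma_i\sigma_j j^\beta\le (i+j)^\beta-i^\beta+j^\beta\le 2j^\beta
\]
(subadditivity of $x^\beta$ for $\beta\le1$; for $\beta<0$ a cruder constant works). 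This bounds the term by $\kappa_2\sum_{i,j}(ij)^\alpha\,2j^\beta|e_i|a^1_j\le 2\kappa_2\sum_i i^\beta|e_i|\sum_j j^{\alpha+\beta}a^1_j$, using $i^\alpha\le i^\beta$ (since $\alpha\le\beta$) and $j^{\alpha+\beta}\le j$ (since $\alpha+\beta\le1$). The term multiplying $e_j$ is symmetric. Altogether the $a$-coagulation part is at most $C_1\|e\|_\beta$ up to a harmless factor, and likewise $C_3\|f\|_\beta$ for $b$.

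For the annihilation terms I split $a^1_ib^1_j-a^2_ib^2_j=e_ib^1_j+a^2_if_j$. The first piece gives $-\sum_{i,j}J_{i,j}i^\beta|e_i|b^1_j\le0$ after multiplying by $\sigma_i$, so it can be discarded. The second piece is bounded by $\kappa_3\sum_{i,j}(ij)^\alpha i^\beta a^2_i|f_j|\le\kappa_3\|a^2\|_1\|f\|_\beta$. By symmetry the $b$-equation yields a term bounded by $\kappa_3\|b^2\|_1\|e\|_\beta$. The paper's $C_2,C_4$ use $a^1,b^1$ instead, and I would arrange the splitting in the order that matches. Summing everything gives
\[
\frac{d}{dt}\big(\|e\|_\beta+\|f\|_\beta\big)\le K'\big(\|e\|_\beta+\|f\|_\beta\big),
\]
and Gr\"onwall then yields the bound with $C_5=e^{K't}$ (up to the constant factor).

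The main obstacle is rigor rather than the algebra. I must justify differentiating the infinite sums, which requires the truncation at $M$ and control of boundary terms such as $\sum_{i\le M<i+j}$, and I must secure the first-moment bounds for arbitrary solutions in the sense of \Cref{Definition}. For the boundary terms the sign structure helps: the problematic gain terms, where $i+j$ exceeds $M$, are dropped and only losses remain, bounded by first-moment tails that are integrable in time.
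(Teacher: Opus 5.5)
Your proposal is correct and follows essentially the same route as the paper. Both arguments use the sign-weighted $\beta$-moment of the differences, the truncated (Ball--Carr) identity with the symmetric rewriting of the coagulation term, the bound $(i+j)^\beta\le i^\beta+j^\beta$, a nonpositive annihilation piece that is discarded, dominated convergence for the boundary terms, and Gr\"onwall. Two details on constants: the factor $\tfrac12$ in the coagulation identity makes your coagulation constant exactly $C_1$ rather than $2C_1$, and the paper's splitting $a^1_i(b^1_j-b^2_j)+b^2_j(a^1_i-a^2_i)$ is the one that produces $C_2,C_4$. Your plan to obtain $\|a^k(t)\|_1\le\|a^k_0\|_1$ for an arbitrary solution, by testing \eqref{eqn 2.1} with $h_i=i$ on $\{1,\dots,M\}$, is more careful than the paper, which only cites the construction in \Cref{finite mass theorem}. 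That step needs only non-negativity, since the truncated balance reduces to $-\sum_{i\le M<i+j} iK_{i,j}a_ia_j$ minus the annihilation terms; the assumption $\alpha\le\frac12$ is not used there.
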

	\begin{proof}
    
        Let
\[
C=(a^{1},b^{1})=\bigl((a^{1}_{j})_{j\ge1},(b^{1}_{j})_{j\ge1}\bigr)
\quad\text{and}\quad
D=(a^{2},b^{2})=\bigl((a^{2}_{j})_{j\ge1},(b^{2}_{j})_{j\ge1}\bigr)
\]
be two solutions of the system \eqref{eqn 1.1}--\eqref{eqn 1.3}, as specified in \Cref{Definition}. Define
\[
X=C-D=(a^{1}-a^{2},\,b^{1}-b^{2})
=(x^{1},x^{2})
=\bigl((x^{1}_{j})_{j\ge1},(x^{2}_{j})_{j\ge1}\bigr),
\]
where
\[
x^{1}_{j}=a^{1}_{j}-a^{2}_{j},
\qquad
x^{2}_{j}=b^{1}_{j}-b^{2}_{j},
\qquad j\ge1.
\]	Now, for $T>0$ and $t \in [0,T)$, we set
		\begin{align}\label{defined}
			v_{1}(t)= \sum _{i=1}^{\infty}  i^{\beta} | x^{1}_{i}(t)|,~ v_{2}(t) =  \sum _{i=1}^{\infty}  i^{\beta} | x^{2}_{i}(t)|,  \    
		\  	w^{1}_{i} = i^\beta \operatorname{sgn}(x^{1}_{i}), \ w^{2}_{i} = i^\beta \operatorname{sgn}(x^{2}_{i})
		\end{align}
for $i\ge 1$, where, for $x\in\mathbb{R}$, the sign function is given by
\[
\operatorname{sgn}(x)=
\begin{cases}
1, & x>0,\\
-1, & x<0,\\
0, & x=0.
\end{cases}
\]
\noindent Observe that if $\eta$ is an absolutely continuous function of $t$, then the function $t\mapsto |\eta(t)|$ is absolutely continuous as well, and consequently
\begin{align}\label{distributional derivative}
\frac{d}{dt}|\eta(t)| = sgn(\eta(t))\frac{d \eta}{dt}(t)~~\text{for almost every}~t>0.
\end{align}
Substituting $\eta(t) = x^{1}_{i}(t)$ and $\eta(t)=x^{2}_{i}(t)$ into \eqref{distributional derivative}, multiplying by $i^{\beta}$, and summing from $i=1$ to $n$, we get
\begin{align}\label{distributional derivative for a}
\frac{d}{dt}\sum_{i=1}^{n} i^{\beta}|x^{1}_{i}(t)|=\sum_{i=1}^{n}i^{\beta}sgn(x^{1}_{i}(t))\frac{d x^{1}_{i}}{dt}(t)
\end{align}
and
\begin{align}\label{distributional derivative for b}
\frac{d}{dt}\sum_{i=1}^{n} i^{\beta}|x^{2}_{i}(t)|=\sum_{i=1}^{n}i^{\beta}sgn(x^{2}_{i}(t))\frac{d x^{2}_{i}}{dt}(t),
\end{align}
for almost every $t>0$. Integrating both sides of \eqref{distributional derivative for a} over $[0,t]$ for any $t\in(0,\infty)$, and then applying
\cite[Lemma 3.1]{ball1990discrete} and following the approach used in the proof of
\cite[Theorem 4.1]{ball1990discrete}, we deduce
\begin{align}\label{estimate on derivatives of a and b}
        \sum_{i=1}^{n} i^{\beta} |x^{1}_{i}(t)|-\sum_{i=1}^{n} i^{\beta} |x^{1}_{i}(0)| & = \int_{0}^{t} [U_{n}(s)+W_{n}(s)] ds\nonumber\\
        &\quad- \int_{0}^{t}\sum_{i=1}^{n} i^{\beta} sgn(x_{i}^{1}) \sum_{j=1}^{\infty} J_{i,j} (a^{1}_{i} b^{1}_{j} - a^{2}_{i} b^{2}_{j})ds,
        \end{align}
     where
\begin{align*}
&V_{i,j}(u):=K_{i,j}u_i u_j,
\qquad
u\in\{a^1,a^2,b^1,b^2\},\\
M_{i,j}(a):=&V_{i,j}(a^1)-V_{i,j}(a^2),
\quad \text{and} \quad
M_{i,j}(b):=V_{i,j}(b^1)-V_{i,j}(b^2),
\end{align*}
for $i,j\geq 1$ and 
    	\begin{align*}
	&U_{n}(s) = \frac{1}{2} \sum_{i+j \leq n} (w^{1}_{i+j} - w^{1}_{i} - w^{1}_{j})\, M_{i,j}(a)= \frac{1}{2} \sum_{i+j \leq n} (w^{1}_{i+j} - w^{1}_{i} - w^{1}_{j})\, K_{i,j}(a^{1}_{i}x^{1}_{j}+a^{2}_{j}x^{1}_{i})\\ &\text{and}~~
    W_{n}(s)= -\sum_{i=1}^{n} \sum_{j = n - i + 1}^{\infty} w^{1}_{i}\, M_{i,j}(a)= -\sum_{i=1}^{n} \sum_{j = n - i + 1}^{\infty} w^{1}_{i}K_{i,j}(a^{1}_{i}x^{1}_{j}+a^{2}_{j}x^{1}_{i}),
\end{align*}
 for $s\in[0,t]$.
We begin by estimating the first term in $U_n(s)$ using the bound $K_{i,j}\le \kappa_{2}(ij)^\alpha$, the definition of the $\operatorname{sgn}$ function, and the condition $\alpha\le\beta$, as follows:
	\begin{align}\label{estimate for a^{1}_{i}}
	\frac{1}{2} \sum_{i+j\leq n} (w^{1}_{i+j}-w^{1}_{i}-w^{1}_{j})\, K_{i,j}\, a^{1}_{i}\, {x^{1}_{j}}
	&\;\leq\;
	\kappa_{2}\sum_{i+j\leq n} i^{\beta} i^{\alpha} j^{\alpha}\, a^{1}_{i}\, |x^{1}_{j}|\nonumber \\
	&= \kappa_{2} \sum_{i=1}^{\infty} i\, a^{1}_{i} 
	\sum_{j=1}^{\infty} j^{\alpha} |x^{1}_{j}|
   \;\leq \;\kappa_{2}\, \|a^{1}_{0}\|_{1}\, v_{1}.
	\end{align}
Here, we have used the inequality
$
\sum_{i=1}^{\infty}i a^{1}_{i}(t)
\leq
\sum_{i=1}^{\infty}i a^{1}_{i}(0),~ t\in(0,\infty),
$ which is a consequence of the argument establishing \Cref{finite mass theorem}. An analogous estimate can be derived for the second term in $U_{n}(s):$
    \begin{align}\label{estimate for a^{2}_{i}}
	\frac{1}{2} \sum_{i+j\leq n}(w^{1}_{i+j}-w^{1}_{i}-w^{1}_{j})\, K_{i,j}\, a^{2}_{j}\, x^{1}_{i}
	\;\leq\;\kappa_{2}\, \|a^{2}_{0}\|_{1}\, v_{1}.
    \end{align}
       Using \eqref{estimate for a^{1}_{i}} together with \eqref{estimate for a^{2}_{i}}, we obtain
    \begin{align}\label{U_{n} estimate} 
       \int_{0}^{t} U_{n}(s) ds\, \leq C_{1} \int_{0}^{t} v_{1}(s) ds,\quad t>0 
    \end{align}
    where $C_{1} =  \kappa_{2}(\|a^{1}_{0}\|_{1} + \| a^{2}_{0}\|_{1}).$
    	Next, we establish that 
	\begin{align}\label{W_{n} estimate}
	  \lim_{n \to \infty} \int_{0}^{t} W_{n}(s) ds = 0,\quad t>0.  
      \end{align}
      Since
\begin{align}
\sum_{i=1}^{\infty}\sum_{j=1}^{\infty}|w_i^1M_{i,j}|
&\le
\kappa_{2}\|a_0^1\|_1^2
+\kappa_{2}\|a_0^2\|_1^2
+2\kappa_{2}\|a_0^1\|_1\|a_0^2\|_1,
\end{align}
and
\begin{align}
\lim_{n\to\infty}
\sum_{i=1}^{n}\sum_{j=1}^{n-i}w_i^1M_{i,j}
=
\sum_{i=1}^{\infty}\sum_{j=1}^{\infty}w_i^1M_{i,j},
\end{align}
we conclude that
\[
\lim_{n\to\infty}W_n(s)
=
\lim_{n\to\infty}
\sum_{i=1}^{n}\sum_{j=n-i+1}^{\infty}w_i^1M_{i,j}(s)
=0
\]
for every $s\in[0,t].$
Hence, \eqref{W_{n} estimate} follows from the DCT. Next, using \eqref{U_{n} estimate} and \eqref{W_{n} estimate}, we obtain
		\begin{align}\label{estimate for sum of U and W}
			\int_{0}^{t} (U_{n}(s)+W_{n}(s))\,ds
			\leq C_{1} \int_{0}^{t} v_{1}(s)\,ds + \int_{0}^{t}W_{n}(s)\, ds,\;\; t>0.
		\end{align}
		
		Moreover, the final term in \eqref{estimate on derivatives of a and b} admits the representation
		\begin{align}
			\nonumber
			-  \sum_{i=1}^{n}i^\beta \text{sgn}&(x^{1}_{i})\sum_{j=1}^{\infty}  J_{i,j}~(a^{1}_i~b^{1}_{j} - a^{2}_i~b^{2}_{j} ) \\
			\nonumber
			=  & -  \sum_{i=1}^{n}i^\beta \text{sgn}(x^{1}_{i})\sum_{j=1}^{\infty} J_{i,j}~(a^{1}_i~b^{1}_{j}- a_{i}^{1} b^{2}_{j}  + a^{1}_{i} b^{2}_{j}- a^{2}_i~b^{2}_{j}) \\
			\nonumber
			= &-\sum_{i=1}^{n}\sum_{j=1}^{\infty} i^{\beta}  sgn(x^{1}_{i})  a^{1}_{i}  x^{2}_j  J_{i,j} -\sum_{i=1}^{n}\sum_{j=1}^{\infty} i^{\beta}  sgn(x^{1}_{i} )  b^{2}_{j}  x^{1}_{i}  J_{i,j} \\
			\leq &\sum_{i=1}^{n}\sum_{j=1}^{\infty} i^{\beta}  a^{1}_{i}  | x^{2}_j | J_{i,j} -\sum_{i=1}^{n}\sum_{j=1}^{\infty} i^{\beta} b^{2}_{j}  |x^{1}_i |  J_{i,j}.\label{eqn 5.11}
		\end{align}
		Since $J_{i,j}\leq \kappa_{3}(ij)^{\alpha}$ and $\alpha \leq \beta,$ we deduce the following inequality,
		\begin{align}
			\nonumber
			\sum_{i=1}^{n}\sum_{j=1}^{\infty} i^{\beta}  a^{1}_{i}  |x^{2}_j | J_{i,j} 
			\leq  \ \kappa_{3} &\sum_{i=1}^{n} \sum_{j=1}^{\infty} i^{\beta} \ |x^{2}_j|\  (i j)^{\alpha}   \ a^{1}_{i}
			\leq \kappa_{3} \sum_{i=1}^{\infty} i a^{1}_{i} \ \sum_{j=1}^{\infty} j^{\alpha} |x^{2}_{j}|\\
			\leq {\kappa_{3}} & \|a^{1}_{0}\|_{1} \sum_{j=1}^{\infty} j^{\beta} |x^{2}_{j}| = C_{2}\sum_{i=1}^{\infty} i^{\beta} |x^{2}_{i}|,\label{eqn 5.12}
		\end{align}
		where $C_{2} =\kappa_{3}\|a^{1}_{0}\|_{1}.$ Using \eqref{eqn 5.11} and \eqref{eqn 5.12}, we obtain the following bound
		\begin{align}\label{estimate for integral of annihilation term}
			\int_{0}^{t}\bigg[	-  \sum_{i=1}^{n}i^\beta \text{sgn}&(x^{1}_{i})\sum_{j=1}^{\infty}  J_{i,j}~(a^{1}_i~b^{1}_{j} - a^{2}_i~b^{2}_{j} ) \bigg] ds \leq C_{2}  \int_{0}^{t} v_{2}(s)ds,\;\; t>0.
		\end{align}
		Now, inserting~\eqref{estimate for sum of U and W} and \eqref{estimate for integral of annihilation term} into~\eqref{estimate on derivatives of a and b}, we conclude that
\begin{align}\label{final estimates}
\sum_{i=1}^{n} i^{\beta} |x^{1}_{i}(t)|-\sum_{i=1}^{n} i^{\beta} |x^{1}_{i}(0)|&= \int_{0}^{t} [U_{n}(s)+W_{n}(s)]\, ds\, - \int_{0}^{t}\sum_{i=1}^{n} i^{\beta} sgn(x_{i}^{1}) \sum_{j=1}^{\infty} J_{i,j} (a^{1}_{i} b^{1}_{j} - a^{2}_{i} b^{2}_{j})ds.\nonumber\\
&\leq  C_{1} \int_{0}^{t}v_{1}(s)\,ds + \int_{0}^{t}W_{n}(s)\,ds + C_{2}\int_{0}^{t}v_{2}(s)\,ds.
\end{align}
Letting $n\to\infty$ in \eqref{final estimates} and employing \eqref{defined} and \eqref{W_{n} estimate}, we have
        \begin{align}\label{estimate for v_{1}}
			v_{1}(t)\leq   v_{1}(0) + C_{1}  \int_{0}^{t} v_{1}(s) ds  + C_{2}  \int_{0}^{t} v_{2}(s) ds,\;\; t>0.
		\end{align}
	By a similar argument, we get
		\begin{align}\label{estimate for v_{2}}
			v_{2}(t) \leq  v_{2}(0) + C_{3} \int_{0}^{t} v_{2}(s) ds  + C_{4}  \int_{0}^{t} v_{1}(s) ds,\;\; t>0,
		\end{align} 
		where $C_{3}= \kappa_{2}(
        \|b^{1}_{0}\|_{1} + \| b^{2}_{0}\|_{1} )$  and $C_{4}= \kappa_{3} \|b^{1}_{0}\|_{1}.$  Adding \eqref{estimate for v_{1}} and \eqref{estimate for v_{2}}, we get
		$$v_{1}(t) + v_{2}(t) \leq  v_{1}(0)+v_{2}(0) +K^{'} \int_{0}^{t} (v_{1}(s) + v_{2}(s)) ds,\;\; t>0,$$
		where $K^{'} = \max\{C_{1}+C_{4}, C_{2}+C_{3}\}.$
	It follows from Gr\"onwall's inequality that
		\begin{align}
			\sum_{i=1}^{\infty}i^{\beta}|x^{1}_{i}(t)| + \sum_{i=1}^{\infty} i^{\beta} |x^{2}_{i}(t)| \leq C_{5}(K^{'},t)\bigg(\sum_{i=1}^{\infty}i^{\beta}|x^{1}_{i}(0)| + \sum_{i=1}^{\infty} i^{\beta} |x^{2}_{i}(0)|\bigg),\;\; t>0.
		\end{align}
The proof of~\Cref{prop 5.2} is therefore complete.
	\end{proof}
\begin{proof}[Proof of \Cref{Uniqueness theorem}]
Consider two solutions of the system \eqref{eqn 1.1}--\eqref{eqn 1.3}
satisfying the conditions stated in \Cref{Definition}, denoted by 
	\begin{align*}
		&C =  (a^{1}, b^{1})= [(a^{1}_{j})_{j\geq 1}, (b^{1}_{j})_{j\geq 1}], \  \   \text{and}  \ \   D = (a^{2}, b^{2}) = [(a^{2}_{j})_{j\geq 1}, (b^{2}_{j})_{j \geq 1}].
	\end{align*}
	with the same initial data, i.e. $C(0)=D(0)$. Since $C(0)=D(0)$, by the non-negativity of the norm and \Cref{prop 5.2}, we obtain
   $$ \|(a^{1}(t),b^{1}(t))-(a^{2}(t),b^{2}(t))\|_{\beta}=||a^{1}(t)-a^{2}(t)||_{\beta}+||b^{1}(t)-b^{2}(t)||_{\beta}=0,\;\; t\geq 0.$$
   Therefore,
   \begin{align*}
       \sum _{i=1}^{\infty}  i^{\beta} | a^{1}_{i}(t)-a^{2}_{i}(t)|=0\;\;\text{and}\;\;\sum _{i=1}^{\infty}  i^{\beta} | b^{1}_{i}(t)-b^{2}_{i}(t)|=0,\;\; t\geq 0.
   \end{align*}
 Hence, $a^{1}_{i}(t) = a^{2}_{i}(t)$ and $b^{1}_{i}(t) = b^{2}_{i}(t)$ for all $i\geq 1$ and $t\geq 0$, thus proving uniqueness.
	\end{proof}
\section{Differentiability}\label{Section Differentiability}
\noindent We now show that the solution defined in \Cref{Definition} for the coagulation-annihilation system \eqref{eqn 1.1}--\eqref{eqn 1.3} is in fact a classical solution for some specific cases. To this end, we first establish the uniform convergence of the sequence of solutions on compact subsets of [0,T], for $T>0$. Moreover, differentiability is established under additional assumptions on the coagulation-annihilation rates, namely,
\begin{align}\label{eqn 6.1}
		K_{i,j}, J_{i,j}\leq \rho_{i} \rho_{j},\;\; i,j \ge 1\quad  \text{and}  \quad \lim_{i\to \infty}\frac{\rho_{i}}{i} = 0.
			\end{align}
	Motivated by~\cite[Result~3, pp. 3403]{leyvraz1981singularities} and \cite[Proposition 6.1]{da2021modelling}, we arrive at the result stated below.
\begin{prop}\label{differentiability prop}
Let $(a, b)$ be a solution to \eqref{eqn 1.1}--\eqref{eqn 1.3} on $[0,T]$, according to \Cref{Definition}, where $T>0$. Assume \eqref{eqn 6.1} holds and the solution satisfies
\begin{align}\label{massdec0}
    \|a(t)\|_{1}\leq \|a^{0}\|_{1},\quad \|b(t)\|_{1}\leq \|b^{0}\|_{1},\;\;t\geq 0.
\end{align}
Then, the solution is continuously differentiable.
\end{prop}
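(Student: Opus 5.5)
Since each $a_i,b_i$ is absolutely continuous and satisfies \eqref{eqn 2.1}--\eqref{eqn 2.2}, it suffices to show that the integrands on the right-hand sides are continuous functions of $s$ on $[0,T]$. Then the fundamental theorem of calculus turns the integral identities into pointwise ones, and $a_i,b_i\in C^{1}([0,T])$. The finite sums $\frac12\sum_{j=1}^{i-1}K_{j,i-j}a_ja_{i-j}$ are continuous because each $a_j$ is. Everything therefore reduces to proving that
\[
s\mapsto \sum_{j=1}^{\infty}K_{i,j}a_j(s),\qquad s\mapsto \sum_{j=1}^{\infty}J_{i,j}b_j(s)
\]
(and the analogues with $a,b$ interchanged) are continuous on $[0,T]$. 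Once this holds, multiplying by the continuous factor $a_i$ or $b_i$ preserves continuity.

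\textbf{Step 1: tail control.} By \eqref{eqn 6.1}, $K_{i,j}\le \rho_i\rho_j$ and $\rho_j/j\to 0$. Given $\varepsilon>0$, choose $L$ such that $\rho_j\le \varepsilon j$ for all $j\ge L$. By \eqref{massdec0}, for every $s\in[0,T]$,
\[
\sum_{j\ge L}K_{i,j}a_j(s)\le \rho_i\,\varepsilon\sum_{j\ge L} j a_j(s)\le \varepsilon\,\rho_i\|a^0\|_1 .
\]
The same bound holds for $b$ and for the $J$-sums. So the series converge uniformly in $s\in[0,T]$; this is the Weierstrass-type uniform tail estimate. In particular they are bounded on $[0,T]$, which also confirms the integrability requirement of \Cref{Definition}.

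\textbf{Step 2: continuity.} Each partial sum $\sum_{j<L}K_{i,j}a_j(s)$ is a finite combination of continuous functions. A uniform limit of continuous functions is continuous, so the infinite series is continuous on $[0,T]$. Hence the full right-hand side of \eqref{eqn 1.1} (and of \eqref{eqn 1.2}) is continuous in $s$. Differentiating \eqref{eqn 2.1}--\eqref{eqn 2.2} then shows that $a_i,b_i$ are differentiable everywhere on $[0,T]$ (one-sided at the endpoints), with continuous derivatives given by \eqref{eqn 1.1}--\eqref{eqn 1.2}. This makes them classical solutions.

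The main obstacle is Step 1, namely obtaining a bound that is uniform in time. It relies essentially on the sublinearity $\rho_j=o(j)$ from \eqref{eqn 6.1} together with the time-uniform first-moment bound \eqref{massdec0}. Without the $o(j)$ condition, pointwise bounds on the moments would only give integrability in time, not uniform convergence of the series. One further check is that no uniform bound on $\rho_i$ in $i$ is needed, since $i$ is fixed throughout.
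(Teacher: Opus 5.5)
Your proof is correct and follows essentially the same route as the paper. Both arguments use the time-uniform tail bound $\sum_{j>M}K_{i,j}a_j(s)\le \rho_i\bigl(\sup_{k>M}\rho_k/k\bigr)\|a^0\|_1$ from \eqref{eqn 6.1} and \eqref{massdec0}, then the uniform limit theorem applied to continuous partial sums on $[0,T]$, which makes the integrands in \eqref{eqn 2.1}--\eqref{eqn 2.2} continuous. Taking the continuity of each $a_j,b_j$ directly from \Cref{Definition}, as you do, is cleaner than the paper's appeal to \Cref{finite mass theorem} at that step.
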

\begin{proof}
To establish that $a_{i}$ is continuously differentiable on $(0,T)$ for every $i\in \mathbb{N}$ and $T>0$, it suffices to show that that the second and third integrands appearing on the right of \eqref{eqn 2.1} depend continuously on time. To this end, we aim to establish that, as $M \to \infty$, the sequences of functions
\[
\sum_{j=1}^{M} K_{i,j} a_{j}
\quad \text{and} \quad 
\sum_{j=1}^{M} J_{i,j} b_{j}, \qquad M \in \mathbb{N},
\]
converge uniformly on $[0,T]$ to 
\[
\sum_{j=1}^{\infty} K_{i,j} a_{j}
\quad \text{and} \quad 
\sum_{j=1}^{\infty} J_{i,j} b_{j},
\]
respectively. Exploiting~\eqref{massdec0}, we obtain
\[
\Big| \sum_{j=1}^{M} K_{i,j} a_{j}(t) - \sum_{j=1}^{\infty} K_{i,j} a_{j}(t) \Big|
\leq \rho_{i} \left(\sup_{k > M} \frac{\rho_{k}}{k}\right) \, \|a(t)\|_{1}\leq \rho_{i} \left(\sup_{k > M} \frac{\rho_{k}}{k}\right) \, \|a^{0}\|_{1},\quad t\in [0, T]
\]
and similarly,
\[
\Big| \sum_{j=1}^{M} J_{i,j} b_{j}(t)- \sum_{j=1}^{\infty} J_{i,j} b_{j}(t) \Big|
\leq \rho_{i} \left(\sup_{k > M} \frac{\rho_{k}}{k}\right) \, \|b(t)\|_{1} \leq \rho_{i} \left(\sup_{k > M} \frac{\rho_{k}}{k}\right) \, \|b^{0}\|_{1},\quad t \in [0, T].
\]
Using \eqref{eqn 6.1}, we conclude that
\[
\lim_{M \to \infty} \ \sup_{t \in [0,T]} 
\Big| \sum_{j=1}^{M} K_{i,j} a_{j}(t)- \sum_{j=1}^{\infty} K_{i,j} a_{j}(t) \Big| = 0
\]
and
\[
\lim_{M \to \infty} \ \sup_{t \in [0,T]} 
\Big| \sum_{j=1}^{M} J_{i,j} b_{j}(t)- \sum_{j=1}^{\infty} J_{i,j} b_{j}(t)\Big| = 0.
\]
Hence, $\sum_{j=1}^{M} K_{i,j} a_{j}$ converges uniformly on $[0,T]$ to $\sum_{j=1}^{\infty} K_{i,j} a_{j}$, and $\sum_{j=1}^{M} J_{i,j} b_{j}$ converges uniformly on $[0,T]$ to $\sum_{j=1}^{\infty} J_{i,j} b_{j}$.
 Moreover, by \Cref{finite mass theorem} when coagulation rates satisfy the assumptions \eqref{eqn 2.3} and \eqref{eqn 2.5}, both $a_{i}$ and $b_{i}$ are continuous on $[0, T]$ for all $i \in \mathbb{N}$. Since, the finite sums $\sum_{j=1}^{M} K_{i,j} a_{j}$ and $\sum_{j=1}^{M} J_{i,j} b_{j}$ are continuous, the uniform limit theorem implies that the infinite sums $ \sum_{j=1}^{\infty} K_{i,j} a_{j}$ and $\sum_{j=1}^{\infty} J_{i,j} b_{j}$ are also continuous on $[0,T].$ Consequently, the second and third integrands in the expressions following the
equals signs in \eqref{eqn 2.1}--\eqref{eqn 2.2} are continuous. This proves \Cref{differentiability prop}.
	\end{proof}
 \section{On the large-time behaviour of solutions}\label{ Large-Time behaviour of solutions}
\noindent We now examine how solutions to \eqref{eqn 1.1}--\eqref{eqn 1.2} evolve as $t \to \infty$, based on the method of~\cite[Proposition~5.1]{laurenccot1999global}, \cite[Theorem 4.3]{carr1994asymptotic}, \cite[Proposition 7.1]{ali2024discrete}, and \cite[Proposition 10.2.1]{Banasiak2019}. 
\begin{prop}
	Suppose the conditions stated in \Cref{finite particle theorem} hold. 
Let $(a,b)= ((a_{i})_{i \ge 1}, (b_{i})_{i\geq 1})$ denote the constructed solution to \eqref{eqn 1.1}--\eqref{eqn 1.3} provided by \Cref{finite particle theorem}. 
Then, we have
$$\lim_{t \to \infty} \|a(t)\|_{1} = 0\quad \text{and} \quad \lim_{t \to \infty} \|b(t)\|_{1} = 0.$$
\begin{proof} By the argument used in the proof of \Cref{finite particle theorem}, for every $t>0$,
\[
\|a(t)\|_{1} \leq \frac{2}{S} \|a^{0}\|^{1/2}_{0} \, t^{-1/2}, 
\qquad 
\|b(t)\|_{1} \leq \frac{2}{S} \|b^{0}\|_{0}^{1/2} \, t^{-1/2}.
\]
Consequently, letting $t\to \infty$, we deduce
$$\lim_{t \to \infty} \|a(t)\|_{1} = 0\quad \text{and} \quad \lim_{t \to \infty} \|b(t)\|_{1} = 0.$$
\end{proof}
\end{prop}
The following theorem shows that each component of the solution \eqref{eqn 1.1}--\eqref{eqn 1.3}, given in \Cref{Definition}, admits a limit as $t \to \infty$. Moreover, under a positivity assumption on the coagulation kernel, these limiting values are necessarily zero.
\begin{theorem}\label{large-time behaviour theorem}
    Let $T\in(0,\infty),$ and $(a^0, b^{0}) =((a^{0}_{i})_{i \ge 1}, (b^{0}_{i})_{i\geq 1}) \in X_{1}^{+}\times X_{1}^{+}.$ Suppose $(a,b)=((a_{i})_{i \ge 1},(b_{i})_{i\ge 1}) \in X_{1}^{+}\times X_{1}^{+}$ is a solution to \eqref{eqn 1.1}--\eqref{eqn 1.3}, given in \Cref{Definition}. Then there exist  sequences 
\begin{align*}
(\bar{a}, \bar{b})= ((\bar{a_{i}})_{i\ge 1}, (\bar{b_{i}})_{i\ge 1}) \in X_{1}^{+}\times X_{1}^{+}
\end{align*}
such that 
\begin{align}
\lim_{t \to \infty} a_{i}(t) = \bar{a_{i}}, \quad  \lim_{t \to \infty} b_{i}(t) = \bar{b_{i}} \quad\text{for all} \quad i \ge 1.
\end{align}
Moreover, assuming $K_{i,i}>0$ holds for all $i\ge1$, it follows that 
\begin{align}\label{limit is zero}
\bar{a_{i}}=0\quad \text{and}\quad \bar{b_{i}}=0 \quad \text{for all}\quad i\ge 1.\end{align}
\begin{proof}
Since $(a,b)=((a_{i})_{i\ge 1},(b_{i})_{i\ge 1})$ is a solution to \eqref{eqn 1.1}--\eqref{eqn 1.3}, according to \Cref{Definition}, for every $0\le p < \sigma,$ we have 
\begin{align*}
a_{i}(\sigma)-a_{i}(p)&= \int_{p}^{\sigma}\bigg(\frac{1}{2} \sum_{j=1}^{i-1} K_{j,i-j}\,a_{j}(s) a_{i-j}(s)
	- \sum_{j=1}^{\infty} K_{i,j}\, a_{i}(s) a_{j}(s)   
	- \sum_{j=1}^{\infty} J_{i,j}\,a_i(s) b_{j}(s)\bigg)\,ds, \quad i \in \mathbb{N}.
    \end{align*}
For $l\ge 1$, summing the above equation over $i =1$ to $i=l$ on both sides, we obtain 
\begin{align}\label{summation}
 \sum_{i=1}^{l}a_{i}(\sigma)-\sum_{i=1}^{l}a_{i}(p)= \int_{p}^{\sigma}\bigg(\frac{1}{2} \sum_{i=1}^{l}\sum_{j=1}^{i-1} K_{j,i-j}&\,a_{j}(s) a_{i-j}(s)
	- \sum_{i=1}^{l}\sum_{j=1}^{\infty} K_{i,j}\, a_{i}(s) a_{j}(s)\nonumber\\ &  - \sum_{i=1}^{l}{\sum_{j=1}^{\infty} J_{i,j}\,a_i(s) b_{j}(s)\bigg)\,ds}.
\end{align}
Upon interchanging the order of summation and performing a change of variables in the first integrand on the right side of \eqref{summation}, we get  
\begin{align}\label{change of order of summation}
\frac{1}{2} \sum_{i=1}^{l} \sum_{j=1}^{i-1}K_{j,i-j} a_{j} a_{i-j} &= \frac{1}{2}\sum_{j=1}^{l-1} \sum_{i=j+1}^{l}K_{j,i-j} a_{j} a_{i-j}= \frac{1}{2} \sum_{i=1}^{l-1} \sum_{j=1}^{l-i}K_{i,j}a_{i}a_{j}.
\end{align}
Using \eqref{change of order of summation} in \eqref{summation}, we deduce 
\begin{align}\label{derivative inequality}
 \sum_{i=1}^{l}a_{i}(\sigma)-\sum_{i=1}^{l}a_{i}(p)&=\int_{p}^{\sigma}\bigg( -\frac{1}{2}\sum_{i=1}^{l-1}\sum_{j=1}^{l-i}K_{i,j} a_{i}(s) a_{j}(s)\nonumber\\&-\sum_{i=1}^{l} \sum_{j=l-i+1}^{\infty} K_{i,j}a_{i}(s) a_{j}(s)-
\sum_{i=1}^{l} \sum_{j=1}^{\infty}J_{i,j}a_{i}(s)b_{j}(s)\bigg)\,ds.
\end{align}
Since the integrands on the right of the above equality take non-positive values, owing to the non-negativity of $a_{i}$, $b_{i}$, $K_{i,j},$ and $J_{i,j}$, we infer that  
\begin{align}\label{decreasing sum condition}
\sum_{i=1}^{l}a_{i}(p)\ge
\sum_{i=1}^{l}a_{i}(\sigma)~~\text{for}~ p<\sigma.\end{align}
Therefore, for every $l\ge 1,$ the function
$$g_{l}(t)=\sum_{i=1}^{l}a_{i}(t),\;\; t\geq 0$$ is non-negative and non-increasing in time. Consequently, there exists a constant $g_{l}^{*}$ such that 
\begin{align*}
g_{l}(t) \to g_{l}^{*} \quad\text{as}\quad t \to \infty
\end{align*}
and 
\begin{align*}
a_{l}(t) \to \bar{a}_{l}\quad\text{as\quad t} \to \infty, 
\end{align*}
where $\bar{a_{1}}=g_{1}^{*}$ and, for all $l\ge 2,$
$\bar{a_l}=g_{l}^{*}-g_{l-1}^{*}\ge0.$
Furthermore, since $a=(a_{i})_{i\ge1} \in X_{1}^{+}$  for $t \ge 0$, \eqref{zeroth moment bound} gives
\begin{align}
\sum_{i=1}^{l}i\bar{a_{i}}&=\lim_{t \to \infty} \sum_{i=1}^{l}ia_{i}(t)\le \lim_{t \to \infty} \sum_{i=1}^{l}ia_{i}(0) = \sum_{i=1}^{l}ia_{i}(0)\le \sum_{i=1}^{\infty}ia_{i}(0).
\end{align}
Taking $l \to \infty,$ we infer
$$\sum_{i=1}^{\infty} i\bar{a_{i}}< \infty.$$
An analogous argument yields the existence of a subsequence $\bar{b}=(\bar{b}_{i})_{i\ge 1} \in X_{1}^{+}$ such that
\begin{align}
\lim_{t \to \infty} b_{i}(t)=\bar{b_{i}}.
\end{align}
We now establish \eqref{limit is zero}, i.e., $\bar{a_{l}}=0$ and $\bar{b_{l}}=0$ for every $l\ge 1$.  Let $(\psi_{i})_{i\ge 1}$ be a real-valued sequence that decays sufficiently rapidly. Since $(a_{i})_{i\ge 1}$ satisfies \eqref{eqn 2.1}, integrating \eqref{eqn 2.1} over the interval $[t,t+\sigma],$ multiplying the resulting identity by $\psi_{i}$, and summing over $i=1,....,l$, we obtain
\begin{align}\label{phi multiplication}
\sum_{i=1}^{l}\psi_{i}a_{i}(t+\sigma)-\sum_{i=1}^{l}\psi_{i}a_{i}(t)=\int_{t}^{t+\sigma}\bigg(\frac{1}{2} \sum_{i=1}^{l} \sum_{j=1}^{i-1} \psi_{i} K_{j,i-j} a_{j}(s) a_{i-j}(s)&-\sum_{i=1}^{l} \sum_{j=1}^{\infty}\psi_{i} K_{i,j} a_{i}(s) a_{j}(s)\nonumber\\&-\sum_{i=1}^{l} \sum_{j=1}^{\infty} \psi_{i} J_{i,j} a_{i}(s) b_{j}(s)\bigg)\,ds.
\end{align}
By taking $l=1$ and $\psi_{i}=1$ in the above equation, and then integrating, we get 
\begin{align*}
a_{1}(t+\sigma) - a_1(t)
&=  \int_t^{t+\sigma} \left[ -a_1^2(s)K_{1,1}
- a_1(s)\sum_{j=2}^{\infty} K_{1,j}\,a_j(s)-\sum_{j=1}^{\infty} J_{1,j}a_{1}(s)b_{j}(s) \right] ds \\
&\le - \int_t^{t+\sigma} K_{1,1}\,a_1^2(s)\,ds
\end{align*}
for all $t>0$ and $\sigma>0$. Taking the limit as $t\to \infty$, we get
\begin{align*}
\lim_{t \to \infty} \int_{t}^{t+\sigma} K_{1,1}a_{1}^{2}(s)\,ds=0.
\end{align*}
Hence, $\bar{a}_{1}=0$. Otherwise, there exists $0<\mu_{1}<a_{1}$ such that $a_{1}(t)>\mu_{1}$ for all large $t$, implying 
\begin{align*}
\lim_{t \to \infty} \int_{t}^{t+\sigma} K_{1,1}a_{1}^{2}(s)\,ds\ge K_{1,1}\mu_{1}^{2}\sigma>0,
\end{align*}
  a contradiction. Next, for $l=2, ~\psi_{1}=0$ and $\psi_{2}=1$ in \eqref{phi multiplication}, we obtain 
 \begin{align*}
 a_{2}(t+\sigma)-a_{2}(t)&=\int_{t}^{t+\sigma}\bigg[\frac{1}{2}K_{1,1}a_{1}^{2}(s)-K_{2,2}a_{2}^{2}(s)-\sum_{j=3}^{\infty}K_{2,j}a_{2}(s)a_{j}(s)-\sum_{j=1}^{\infty}J_{2,j}a_{2}(s)b_{j}(s)\,\bigg]ds\\
 &\le \int_{t}^{t+\sigma}\bigg[\frac{1}{2}K_{1,1}a_{1}^{2}(s)-K_{2,2}a_{2}^{2}(s)\bigg]\,ds.
 \end{align*}
 Taking $t\to\infty$, we get
 \begin{align*}
 0\le\lim_{t\to \infty} \int_{t}^{t+\sigma}\bigg(\frac{1}{2}K_{1,1}a_{1}^{2}(s)-K_{2,2}a_{2}^{2}(s)\bigg)\,ds
 \end{align*}
 and since $a_{1}(t)\to 0$ as $t \to \infty$ and $(K_{i,i})_{i\ge 1}>0,$ this implies that
 \begin{align*}
     \lim_{t \to \infty} \int_{t}^{t+\sigma}K_{2,2}a_{2}^{2}(s)\,ds=0.
 \end{align*}
 This, in turn, implies that $\bar{a_{2}}=0$. Indeed, if this were not the case, then there would exist a constant $\mu_{2}>0$ for which $a_{2}(t)>\mu_{2}$ for all sufficiently large $t$, which leads to a contradiction, since
 \begin{align*}
     \lim_{t \to \infty} \int_{t}^{t+\sigma}K_{2,2}{a}^{2}_{2}(s)\,ds\ge K_{2,2}\mu_{2}^{2}\sigma>0.
 \end{align*}
 By induction, assume that $\bar{a_1} = \ldots \bar{a_{l-1}}=0$. We aim to prove that $\bar{a_{l}}=0$. To this end, note that 
 \begin{align}
 0&=\lim_{t \to \infty}(a_{l}(t+\sigma)-a_{l}(t))\nonumber\\
 &=\lim_{t \to \infty}\bigg[\int_{t}^{t+\sigma}\bigg(\frac{1}{2}\sum_{j=1}^{l-1}K_{j,l-j} a_{j}(s) a_{l-j}(s)-\sum_{j=1}^{\infty}K_{l,j} a_{l}(s)a_{j}(s)-\sum_{j=1}^{\infty}J_{l,j}a_{l}(s)b_{j}(s)\bigg)\,ds\bigg].
 \end{align}
 Invoking the induction hypothesis, the first term vanishes in the limit as $t \to \infty$. Furthermore, by isolating the contribution corresponding to $l$ in the second summation, we obtain
 \begin{align}
 0\leq - K_{l,l}~ \lim_{t \to \infty} \int_{t}^{t+\sigma} (a_{l}(t))^2\,ds.
 \end{align}
Hence, $\bar{a_{l}}=0$.
Moreover, by repeating the arguments developed above, one can further conclude that $\bar{b}_{l}=0$ for all $l\ge 1$. The proof of \Cref{large-time behaviour theorem} is thereby complete.
\end{proof}
\end{theorem}
Next, it is demonstrated that, under a suitable hypothesis on the coagulation kernel, the total particle number decreases to zero as $t \to \infty$.
\begin{prop}
Let $T \in (0,\infty)$ and $(a^{0},b^{0})=((a^{0}_{i})_{i\ge 1}, (b^{0}_{i})_{i\ge 1}) \in X_{1}^{+} \times X_{1}^{+}$. Let $(a,b)=((a_{i})_{i\ge 1}, (b_{i})_{i\ge 1}) \in X_{1}^{+}\times X_{1}^{+}$, a solution of \eqref{eqn 1.1}--\eqref{eqn 1.3}, as introduced in \Cref{Definition}. Assume there is a constant $\gamma>0$ for which
\begin{align}\label{positivity condition on coagulation kernel}
K_{i,j}\ge \xi, \quad \text{for all}\quad i,j\ge1.
\end{align}
Then 
\begin{align*}
\lim_{t \to \infty} \sum_{i=1}^{\infty}a_{i}(t)=0,\quad \text{and} \quad \lim_{t \to \infty} \sum_{i=1}^{\infty}b_{i}(t)=0.
\end{align*}
\begin{proof}
The proof of the above proposition follows similar lines to those of \cite[Proposition 7.1]{ali2024discrete} and \cite[Proposition 10.2.1]{Banasiak2019}. 
\end{proof}
 \end{prop}
 We next turn our attention to the case of pure annihilation.
\begin{prop}
Assume that \eqref{eqn 6.1} holds and $(a^{0}, b^{0})=((a_{i}^{0})_{i \ge1},(b_{i}^{0})_{i\ge1}) \in X_{0}^{+}\times X_{0}^{+}$. Further, suppose that $K_{i,j} = 0$ and $J_{i,j} \ge \delta$ for some $\delta > 0$ and for all $i, j \geq 1$. Let $(a,b)= ((a_{i})_{i\ge1}, (b_{i})_{i\geq 1}$) denote the constructed solution to \eqref{eqn 1.1}--\eqref{eqn 1.3} provided by \Cref{differentiability prop} and satisfy
\begin{align}\label{massdec}
    \|a(t)\|_{1}\leq \|a^{0}\|_{1},\quad \|b(t)\|_{1}\leq \|b^{0}\|_{1},\;\;t\geq 0.
\end{align}
The following claims hold true:
\begin{itemize}
\item[(i)] If $\|a^{0}\|_{0} = \|b^{0}\|_{0}$, then
\begin{align}
\lim_{t \to \infty}\|a(t)\|_{0} = 0
\quad \text{and} \quad
\lim_{t \to \infty} \|b(t)\|_{0} = 0.
\end{align}
\item[(ii)] If $\|a^{0}\|_{0} \neq \|b^{0}\|_{0}$, then the long time behaviour is determined by larger initial mass. More precisely,
 if $\|a^{0}\|_{0} > \|b^{0}\|_{0}$, then $\|b(t)\|_{0} \to 0$ and $\|a(t)\|_{0} \to \|a^{0}\|_{0}-\|b^{0}\|_{0}$  as $t \to \infty$. Conversely, if $\|a^{0}\|_{0} < \|b^{0}\|_{0}$, then $\|a(t)\|_{0} \to 0$ and $\|b(t)\|_{0} \to \|b^{0}\|_{0}-\|a^{0}\|_{0}$ as $t \to \infty$.
\end{itemize}
\end{prop}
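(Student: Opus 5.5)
With $K\equiv 0$ the system is pure annihilation:
\[
\dot a_i=-a_i\sum_{j}J_{i,j}b_j,\qquad \dot b_i=-b_i\sum_j J_{i,j}a_j .
\]
The plan is to work with the zeroth moments $A(t)=\|a(t)\|_0$ and $B(t)=\|b(t)\|_0$. There are three steps: a conservation law for $A-B$, monotonicity of $A$ and $B$, and a differential inequality that forces the smaller of the two to vanish.

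\textbf{Conservation of $A-B$.} Sum \eqref{eqn 2.1} and \eqref{eqn 2.2} over $i\le l$ and let $l\to\infty$. The loss term for $a$ is $\int_0^t\sum_{i,j}J_{i,j}a_ib_j\,ds$. By the symmetry $J_{i,j}=J_{j,i}$ in \eqref{symmetry conditions}, the loss term for $b$ is the same double sum. Both are finite: by \eqref{eqn 6.1},
\[
\sum_{i,j}J_{i,j}a_ib_j\le\Big(\sum_i\rho_ia_i\Big)\Big(\sum_j\rho_jb_j\Big).
\]
Since $\rho_i\le Ci$, each factor is at most $C\|a\|_1$ (resp.\ $C\|b\|_1$), which is bounded by \eqref{massdec}. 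The sums can therefore be interchanged by Fubini/monotone convergence. This gives
\[
A(t)=A(0)-\int_0^t\sum_{i,j}J_{i,j}a_ib_j\,ds,\qquad B(t)=B(0)-\int_0^t\sum_{i,j}J_{i,j}a_ib_j\,ds.
\]
Hence $A(t)-B(t)=\|a^0\|_0-\|b^0\|_0=:m$ for all $t$, and $A,B$ are non-increasing. They therefore have limits $A_\infty,B_\infty\ge0$ with $A_\infty-B_\infty=m$.

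\textbf{Decay of the smaller population.} Use the lower bound $J_{i,j}\ge\delta$, which gives $\sum_{i,j}J_{i,j}a_ib_j\ge\delta AB$. Together with the identity above,
\[
B(t)+\delta\int_0^tA(s)B(s)\,ds\le B(0)<\infty,\qquad\text{so}\qquad \int_0^\infty A(s)B(s)\,ds<\infty .
\]
Since $A,B$ are monotone with limits, this forces $A_\infty B_\infty=0$: otherwise $AB\ge A_\infty B_\infty>0$ for all $t$, and the integral would diverge. Alternatively, because $A,B$ are absolutely continuous, one can write $\dot B\le-\delta(B+m)B$ and compare with the logistic/Riccati ODE, which gives explicit rates.

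\textbf{Conclusion.} Combine $A_\infty B_\infty=0$ with $A_\infty-B_\infty=m$.
\begin{itemize}
\item[(i)] If $m=0$, then $A_\infty=B_\infty$ and $A_\infty B_\infty=0$, so both limits are $0$.
\item[(ii)] If $m>0$, then $A_\infty\ge m>0$, so $B_\infty=0$ and $A_\infty=m=\|a^0\|_0-\|b^0\|_0$. The case $m<0$ is symmetric.
\end{itemize}

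The main obstacle is the first step. The zeroth-moment identity must hold for the infinite system, not just the truncation: we need the annihilation double sum to be integrable and the interchange of the $i$-sum with the time integral to be legitimate. This is where \eqref{eqn 6.1} and \eqref{massdec} enter, through the bound $\sum_i\rho_ia_i\le C\|a^0\|_1$. If $a^0\in X_0^+$ but not in $X_1^+$, the right-hand side of \eqref{massdec} is infinite and this bound fails. In that case I would instead use the truncated sums over $i\le l$ together with monotone convergence, since all the terms are nonnegative. Note also that $\int_0^t\sum_{i,j}J_{i,j}a_ib_j\,ds\le\|a^0\|_0$ holds in any case, because $A(t)\ge0$.
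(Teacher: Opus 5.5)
Your proof is correct, and it differs from the paper's in two places. First, the paper invokes the differentiability proposition and differentiates $\|a(t)\|_0$ and $\|b(t)\|_0$ directly. It justifies moving $d/dt$ inside the infinite sum via \eqref{massdec} and uniform convergence of the double series. You instead sum the integral identities \eqref{eqn 2.1}--\eqref{eqn 2.2} over $i\le l$ and let $l\to\infty$ by monotone convergence. Second, the paper solves the comparison inequalities $\frac{d}{dt}\|a\|_0\le-\delta\|a\|_0^2$ (case $C=0$) and $\frac{d}{dt}\|b\|_0\le-\delta\|b\|_0(\|b\|_0+C)$ (case $C>0$) explicitly. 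You use only $\int_0^\infty A(s)B(s)\,ds\le B(0)/\delta$ and monotonicity to get $A_\infty B_\infty=0$, then close with $A_\infty-B_\infty=m$.

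The paper's route gives rates: $\|a(t)\|_0\le\|a^0\|_0/(1+\delta\|a^0\|_0t)$ in the balanced case, and decay like $e^{-\delta Ct}$ of $\|b\|_0/(\|b\|_0+C)$ otherwise. Your argument yields these only if you carry out the Riccati comparison you mention.

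Your route gives generality. The monotone convergence step already yields $A(t)=A(0)-\int_0^t\sum_{i,j}J_{i,j}a_ib_j\,ds$, with the integral automatically bounded by $\|a^0\|_0$ since $A(t)\ge0$. So you need only nonnegativity, the symmetry of $J$, and $J_{i,j}\ge\delta$; the conditions \eqref{eqn 6.1}, \eqref{massdec} and the differentiability proposition are not required. In particular, your argument covers data in $X_0^+\setminus X_1^+$. There \eqref{massdec} carries no information, and the paper's justification of the interchange relies on finite first moments, so it is unavailable. The detour through $\rho_i\le Ci$ in your first step is therefore unnecessary; you can lead directly with the monotone-convergence version.
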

\begin{proof}
Since \eqref{eqn 6.1} holds, the solution to \eqref{eqn 1.1}--\eqref{eqn 1.2} is differentiable by \Cref{differentiability prop}. Moreover, as  $K_{i,j} = 0$ for all $i,j \geq 1$, (1.1)--(1.2) yield
\begin{equation}\label{diferential inequality for a}
\frac{da_i(t)}{dt}  = - \sum_{j=1}^{\infty} J_{i,j}\, a_i(t)\, b_j(t),\quad t>0,
\end{equation}
\begin{equation}\label{differential inequality for b}
\frac{db_j(t)}{dt} = - \sum_{i=1}^{\infty} J_{i,j}\, a_i(t)\, b_j(t), \quad t>0.
\end{equation}
Summing the above equations over $i \ge 1$ and $j \ge 1$, respectively, we obtain
\begin{align}\label{equality of differential inequalities}
\frac{d}{dt}\sum_{i=1}^{\infty}a_i(t) = - \sum_{i=1}^{\infty} \sum_{j=1}^{\infty} J_{i,j}\, a_i(t)\, b_j(t),
\qquad
\frac{d}{dt} \sum_{j=1}^{\infty}b_{j}(t) = - \sum_{i=1}^{\infty} \sum_{j=1}^{\infty} J_{i,j}\, a_i(t)\, b_j(t),
\end{align}
for all $t>0$. In the above calculation, we interchange the derivative and the summation by using \eqref{massdec} together with the uniform convergence of the double series on the right side of \eqref{equality of differential inequalities}. Moreover, it follows from \eqref{equality of differential inequalities} and the non-negativity of $(a_i)_{i\ge1}$, $(b_i)_{i\ge1}$, and $J_{i,j}$ for $i,j\ge1$ that the functions $t\mapsto\|a(t)\|_{0}$ and $t\mapsto\|b(t)\|_{0}$ are non-increasing and bounded below by $0$. Therefore,
\[
\lim_{t\to\infty}\|a(t)\|_{0}
\quad\text{and}\quad
\lim_{t\to\infty}\|b(t)\|_{0}
\]
exist. Now, from~\eqref{equality of differential inequalities}, we infer that
\begin{align}\label{differenceconservation}
\|a(t)\|_{0} - \|b(t)\|_{0} = \|a^{0}\|_{0} - \|b^{0}\|_{0} =: C,
\end{align}
for each $t>0$. Next, we consider two cases.
\begin{itemize}
\item[(i)] Suppose that $\|a^{0}\|_{0} = \|b^{0}\|_{0}$. Then $C = 0$, and consequently $\|a(t)\|_{0} = \|b(t)\|_{0}$ for all $t \ge 0$. Using the lower bound $J_{i,j} \ge \delta > 0$ in~\eqref{equality of differential inequalities}, we obtain
\begin{align}
\frac{d}{dt}\|a(t)\|_{0} = - \sum_{i=1}^{\infty} \sum_{j=1}^{\infty} J_{i,j}\, a_i(t)\, b_j(t) \le -\delta \|a(t)\|_{0} \|b(t)\|_{0}, \quad t>0.
\end{align}
Since $\|a(t)\|_{0} = \|b(t)\|_{0}$, we deduce that
\begin{align}
 \frac{d}{dt}\|a(t)\|_{0}\le -\delta \|a(t)\|_{0}^2, \quad t>0.
\end{align}
\end{itemize}
\noindent Solving the differential inequality above, we obtain
\[
\|a(t)\|_{0} \le \frac{\|a^{0}\|_{0}}{1 + \delta \|a^{0}\|_{0}t},\quad t>0,
\]
which implies $\lim_{t \to \infty} \|a(t)\|_{0} = 0$. Since $\|a(t)\|_{0} = \|b(t)\|_{0}$, we also obtain $\lim_{t \to \infty} \|b(t)\|_{0}= 0$.

\noindent
(ii) Assume that $\|a^{0}\|_{0}>\|b^{0}\|_{0}$. Therefore, we have $\|a^{0}\|_{0} -\|b^{0}\|_{0} = C>0$. Then, by using~\eqref{differenceconservation}, we get $\|a(t)\|_{0} = \|b(t)\|_{0} + C$ for all $t \ge 0$. Substituting this into \eqref{equality of differential inequalities}, we obtain
\[
 \frac{d}{dt}\|b(t)\|_{0} \le -\delta \|b(t)\|_{0}\big(\|b(t)\|_{0}+ C\big), \quad t>0.
\]
Separating the variables and integrating over $(0,t)$ yields
\[
\frac{\|b(t)\|_{0}}{\|b(t)\|_{0} + C} \le \frac{\|b^{0}\|_{0}}{\|b^{0}\|_{0} + C} e^{-\delta C t}, \quad t>0.
\]
Consequently, since $C>0$, we infer that
\[
\frac{\|b(t)\|_{0}}{\|b(t)\|_{0}+C}\to0
\qquad\text{as }t\to\infty.
\]
It then follows from $C>0$ and \eqref{differenceconservation} that
\[
\|b(t)\|_{0}\to0
\qquad\text{and}\qquad
\|a(t)\|_{0}\to C
\quad\text{as }t\to\infty.
\]
An analogous argument applies when $\|a^{0}\|_{0}< \|b^{0}\|_{0}$. In this case, we obtain
\[
\|a(t)\|_{0} \to 0 \quad \text{and} \quad \|b(t)\|_{0}\to \|b^{0}\|_{0} - \|a^{0}\|_{0}\quad \text{as } t \to \infty.
\]
Thus, in both cases, the asserted large-time behavior follows, and the proof is complete.
\end{proof}
\section*{Acknowledgement}
\noindent The first author extends sincere thanks to the Ministry of Education, Government of India and the Indian Institute of Technology Roorkee for funding this work through a Ph.D. fellowship.
The second author expresses sincere gratitude for the financial support received from the DSI/NRF SARChI Chair in Mathematical Models and Methods in Biosciences and Bioengineering through the University of Pretoria's Externally Funded Postdoctoral Fellowship Programme (Cost Centre: N00317/82770). The authors also thank Prof. Philippe Lauren{\c{c}}ot for his valuable insights.
	
	\bibliographystyle{abbrv}
	\bibliography{Coag_Annihilation}
\end{document}